\documentclass[final,reqno]{elsarticle}
\usepackage{mathrsfs}
\usepackage{amssymb,amsmath,amsfonts,latexsym}
\usepackage{amsthm}
\usepackage{graphicx,float,epsfig,color,fancyhdr}
\usepackage{framed}
\usepackage{todonotes}
\usepackage{comment}
\usepackage{multirow}
\usepackage{relsize}
\usepackage[utf8x]{inputenc}
\usepackage{scalerel,subfigure,booktabs}
\usepackage{enumitem}
\usepackage{multicol}
\usepackage{siunitx}
\usepackage{array}

\usepackage[colorlinks=true,breaklinks=true,linkcolor=lightblue,citecolor=lightblue,urlcolor=lightblue]{hyperref}

\allowdisplaybreaks

\newtheorem{lemma}{Lemma}[section]
\newtheorem{remark}{Remark}[section]
\newtheorem{proposition}{Proposition}[section]

\newtheorem{theorem}{Theorem}[section]
\newtheorem{corollary}{Corollary}[section]

\def\bbL{\mathbb{L}}

\def\l{\lambda}
\def\PiK{\Pi_{K}^{\Delta}}

\def\sp{\mathop{\mathrm{sp}}\nolimits}

\newcommand\vdiv{\mathop{\mathrm{div}}\nolimits}
\newcommand\bdiv{\mathop{\mathbf{div}}\nolimits}
\newcommand\cF{\mathcal{F}}
\newcommand\cJ{\mathcal{J}}
\newcommand\rP{\mathrm{P}}

\newcommand{\norm}[1]{\|#1\|}
\newcommand{\jump}[1]{[\![#1]\!]}

\journal{}
\date{\today}

\def\bL{\mathbf{L}}
\def\bP{\mathbf{P}}
\def\bp{\mathbf{p}}
\def\bv{\mathbf{v}}
\def\bn{\boldsymbol{n}}
\def\bkappa{\boldsymbol{\kappa}}
\def\bzero{\boldsymbol{0}}
\def\bPi{\mathbf{\Pi}}
\def\rH{\mathrm{H}}

\def\rP{\mathrm{P}}
\def\rV{\mathrm{V}}

\DeclareMathOperator{\spec}{sp}

\begin{document}
\begin{frontmatter}

\title{A posteriori analysis of a virtual element approach on polytopal meshes for the buckling eigenvalue problem}

\author[1]{Franco Dassi}
\ead{franco.dassi@unimib.it}
\address[1]{Dipartimento di Matematica e Applicazioni, Università degli studi di Milano Bicocca, Via Roberto Cozzi 55 - 20125 Milano, Italy.}
\author[2]{Andr\'es E. Rubiano}
\ead{andres.rubianomartinez@monash.edu}
\address[2]{School of Mathematics, Monash University, 9 Rainforest Walk, Melbourne, VIC 3800, Australia.}
\author[3]{Iv\'an Vel\'asquez}
\ead{ivan.velasquez@unimilitar.edu.co}
\address[3]{Departamento de Matem\'aticas, Universidad Militar Nueva Granada, Bogot\'a, Colombia.}

\begin{abstract} 
We introduce a novel residual-based a posteriori error estimator for the conforming $C^1$ Virtual Element Method (VEM) applied to the buckling eigenvalue problem, incorporating nonlinear plane stress effects in both two and three dimensions. The estimator is fully computable on general polyhedral meshes and implemented within the open-source \texttt{vem++} library. Its reliability is rigorously justified via bounds on the residual equation using polynomial projections, stabilisation contributions, and interpolation estimates, while efficiency is ensured through the use of bubble function arguments. Comprehensive numerical experiments in 2D and 3D illustrate the estimator’s optimal accuracy and robustness, highlighting its potential for predictive analysis of complex plate structures.

\end{abstract}

\begin{keyword} 
A posteriori error analysis in 2D and 3D 
\sep virtual element method
\sep buckling spectral problem.
\end{keyword}

\end{frontmatter}
\setcounter{equation}{0}
\section{Introduction}\label{SEC:Introduction}

The instability of thin plates under in-plane loading can be formulated as a buckling eigenvalue problem for the transverse displacement of the plate. The governing equation involves the fourth-order bending operator together with additional terms accounting for the effect of the (possibly nonlinear) in-plane stresses on the deflection. In this formulation, the eigenvalue represents a critical load parameter rather than a vibration frequency. Buckling occurs when the load reaches values for which the homogeneous problem admits nontrivial solutions. The corresponding eigenfunctions describe the buckling modes, while the smallest eigenvalue determines the first buckling load, marking the loss of stability of the flat configuration. This class of problems has attracted significant attention due to its importance in engineering applications, such as the design of automotive, underwater and aerospace structures, where predicting instability is crucial for safety and performance (e.g. \cite{BYUN2011274,YORK1998665}).

Several numerical methods have been proposed for the approximation of the buckling eigenvalue problem, including conforming, nonconforming, and mixed Finite Element Method (FEM) and Virtual Element Method (VEM) discretisations, as well as discrete singular convolution (DSC) methods. We refer to 
\cite{ADAK2023115763,Brenner_VK2017,CWCC2017,CKD,I2,MM2015,MoraVelasquez2020,Ra} for a non-exhaustive list of contributions on this topic. 

In this work, we employ a conforming VEM for the $H^2$ variational formulation of the buckling eigenvalue problem \cite{ABSV2016,BDR2019C1Polyhedral}. Unlike classical $C^1$ FEM (see e.g. \cite{ciarlet}), which typically require high polynomial degrees and a large number of degrees of freedom (DoFs) to ensure conformity, the lowest-order conforming VEM achieves this with significantly fewer DoFs ---specifically, 3 per vertex in 2D and 4 per vertex in 3D. Furthermore, the VEM naturally accommodates general polyhedral elements, including non-convex shapes, and efficiently handles hanging nodes, simplifying the implementation of adaptive refinement strategies.

A posteriori error estimation plays a central role in the numerical approximation of Partial Differential Equations (PDEs), particularly when the exact solution is unknown. These estimators are computable quantities that depend only on the discrete solution and the problem data, which provide practical information about the quality of a given numerical approximation. In particular, adaptive schemes use a posteriori error estimators to identify regions of the computational domain where the error is large, which enables a selective refinement of the mesh that leads to more efficient algorithms that concentrate computational effort where it is most needed. This is particularly relevant for problems exhibiting localised phenomena, such as boundary layers, singularities, or sharp gradients. 

From a mathematical viewpoint, an a posteriori error estimator should satisfy two key properties: reliability, meaning that it provides an upper bound for the true error, and efficiency, meaning that it also reflects the error from below, at least up to multiplicative constants, oscillation terms (polynomial projection or stabilisation terms), and higher-order terms that vanish for very small mesh size ($h\rightarrow0$). In the context of buckling eigenvalue problem a posteriori error estimation for FEM based discretisations of $H^1$ variational formulation have been proposed \cite{HLM2015,Stein1994AdaptiveFE} and adaptive based on FEM/VEM discretisations for $H^2$ variational formulation of the biharmonic problem can be found in \cite{CARSTENSEN2024,dassi2025posteriorierrorestimatesc1,feng2023,Li2018,MR4047014}.

\paragraph{Main contributions}
To the best of the author’s knowledge, this work is the first that:
\begin{itemize}
    \item Develop a reliable and efficient residual-based a posteriori error estimator for the $C^1$-VEM applied to the buckling eigenvalue problem in both 2D and 3D.
    \item Account for nonlinear plane stress effects in the a posteriori error estimator, enabling accurate prediction of buckling modes in plates under complex multi-axial loading.
    \item Provide an open-source implementation within the \texttt{vem++} library \cite{dassi2025vem++}, supporting computations on general polytopal meshes.
\end{itemize}

In Section~\ref{sec:main_problem}, we introduce the $H^2$ variational formulation for the buckling eigenvalue problem and its spectral characterisation. Section~\ref{SEC:DISCRETE} introduces the VEM space for both 2D and 3D together with a priori error estimates. The residual-based a posteriori error analysis is developed in Section~\ref{sec:apost}. Finally, several numerical results illustrating the accuracy and applicability of the method are provided in Section~\ref{sec:numerical_examples}.

\paragraph{Recurrent notation} This paper employs usual notation for differential operators in $\mathbb{R}^d$ ($d=2,3$). In particular, the gradient, vector divergence, and tensor divergence operators are denoted by $\nabla:\mathbb{R}\rightarrow \mathbb{R}^d$, $\vdiv:\mathbb{R}^d\rightarrow \mathbb{R}$, and $\bdiv:\mathbb{R}^{d\times d}\rightarrow \mathbb{R}^d$. The fourth-order biharmonic operator is denoted by $\Delta^2: \mathbb{R} \rightarrow \mathbb{R}$, where $\Delta = \vdiv(\nabla)$. Finally, the Hessian matrix consists of all second-order partial derivatives and is denoted by $\nabla^2: \mathbb{R}\rightarrow \mathbb{R}^{d\times d}$. 

We adopt standard notation for Sobolev spaces, together with the associated norms and semi-norms (see e.g. \cite{adams2003sobolev}). By $a \lesssim b$ we denote the inequality $a\leq C b$, where $C$ is a generic positive constant independent of the mesh size $h$ and can take different values on each occurrence. 

\section{The buckling eigenvalue problem}\label{sec:main_problem} Let $\Omega\subseteq \mathbb{R}^d$ be a polytopal bounded domain with boundary $\Gamma:=\partial \Omega$. The buckling mode $u:\Omega\rightarrow \mathbb{R}$, subjected to a plane stress tensor field $\bkappa : \Omega \rightarrow \mathbb{R}^{d\times d}$ with non-dimensional critical load factor $\lambda\in \mathbb{R}$ satisfy the following eigenvalue problem:
\begin{subequations}\label{eq:MPr}
    \begin{alignat}{2}
      \Delta^2 u &= -\lambda \vdiv (\bkappa\nabla u), \quad && \text{in } \Omega, \label{eq:balance}\\
      \mathcal{B}^{j}u &= 0,\quad  && \text{on } \Gamma. \label{eq:bc}
    \end{alignat}
\end{subequations}
Note that the non-dimensional critical load factor is normalised by multiplying by the factor $L/D$, where $L$ denotes the plate size and $D=(Et^{3})/(12(1-\nu^{2}))$ is the bending stiffness, with $E$, $t$, and $\nu$ representing the Young’s modulus, plate thickness, and Poisson’s ratio. Whereas, the plane stress tensor is assumed to satisfy the following conditions
\begin{align*}
    \bkappa \in [\bbL^\infty(\Omega)]^{d\times d}\setminus\{\bzero\}, \quad \bkappa = \bkappa^{\top}, \quad \bdiv(\bkappa) = \bzero, \quad \text{in } \Omega.
\end{align*}

The boundary conditions are defined through the linear differential operator $\mathcal{B}^{j}$ (cf. \eqref{eq:bc}) where $j\in\{\textbf{SSP},\textbf{CP}\}$ (see e.g. \cite[Section~2.3]{gazzola}). The notation \textbf{SSP} and \textbf{CP} indicate Simply Supported Plate and Clamped Plate type boundary conditions, defined respectively by 
\begin{subequations}
    \begin{alignat}{2}
        \mathcal{B}^{\textbf{SSP}}u&:=\Delta^{\beta-1}u=0,\quad &&\mbox{on }\Gamma,\label{eq:SSP}\\
        \mathcal{B}^{\textbf{CP}}u&:=\partial_{\bn}^{\beta-1}u=0,\quad &&\mbox{on }\Gamma,\label{eq:CP}
    \end{alignat}      
\end{subequations}
where $\beta \in \{1,2\}$, $\bn$ denotes be the outward unit normal vector to $\Gamma$, and $\partial_{\boldsymbol{n}}$ the normal derivative.

\subsection{Weak formulation}
In view of the boundary conditions \eqref{eq:SSP}-\eqref{eq:CP}, basic integration by parts lead to: find $(\lambda,u)\in \mathbb{R}\times (\rV\setminus \{0\})$ such that
\begin{align}\label{WMPr}
    a(u,v) = \lambda b(u,v), \quad \forall v\in \rV ,
\end{align}
where the bilinear forms $a: \rV\times \rV  \to \mathbb{R}$, $b:\rV\times \rV\to \mathbb{R}$ are defined by 
\begin{align*}
     a(u,v):=\int_{\Omega}\nabla^2 u: \nabla^2 v, \quad b(u,v):=\int_{\Omega}(\bkappa\nabla u)\cdot \nabla v.
\end{align*}
Without losing generality, the space $\rV$ refers to $H^2(\Omega)\cap H_0^1(\Omega)$ and $H_0^2(\Omega)$ for the \textbf{SSP} and \textbf{CP} boundary conditions, respectively. It is well-known that standard arguments in Sobolev spaces imply the continuity of $a(\cdot,\cdot)$ and $b(\cdot,\cdot)$. Whereas, the generalised Poincar\'e inequality implies the ellipticity of $a(\cdot,\cdot)$ on  $\rV$. Thus, a straight-forward application of the Lax–Milgram Theorem proves the well-posedness of \eqref{WMPr}.
\subsection{Spectral characterisation}
It is well known that the spectrum of problem \eqref{WMPr} can be characterised by the spectrum of a bounded, compact, and self-adjoint operator defined as follows (see, for instance, \cite{ciarlet}):

\begin{equation*}
    \begin{array}{rl}
&T:V\longrightarrow V  \\
& \hspace*{0.55cm} f \longmapsto  Tf,     
    \end{array}
\end{equation*}
where, $Tf$ is the unique solution of the following source problem:
\begin{equation}\label{defOfT}
a(Tf,v)=b(f,v),\quad \forall v\in V.
\end{equation}
Moreover, the following result summarises some important properties of the operator $T$, we refer to \cite{dassi2025posteriorierrorestimatesc1,MoraVelasquez2020} for further details.
\begin{theorem} The following statments hold true
\begin{enumerate}
    \item[(i)] A pair $(\lambda,u)$ solves \eqref{WMPr} if and only if $(\mu,u)$ satisfies $Tu = \mu u$, for $\lambda \neq 0$ and $\mu := \lambda^{-1}$.
    \item[(ii)] The spectrum of $T$  is given by $
\spec(T)=\{0 \}\cup \{ \mu_k\}_{k\in \mathbb{N}}$, where $\{ \mu_k\}_{k\in \mathbb{N}}\subseteq \mathbb{R}$ is a sequence converging to zero, and each eigenvalue $\mu_k$ has finite multiplicity.
\item[(iii)] If $\widetilde{T}:H_0^1(\Omega)\longrightarrow V$ denotes the extension of $T$ (cf. \eqref{defOfT}) to $H_0^1(\Omega)$, then $\spec(T)=\spec(\widetilde{T})$.
\end{enumerate}
\end{theorem}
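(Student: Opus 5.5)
The plan is to first establish the basic properties of $T$ and then derive (i)--(iii) from them.

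\emph{Well-definedness, boundedness and self-adjointness.} For $f\in \rV$, the map $v\mapsto b(f,v)$ is a bounded functional on $\rV$, since
\[
|b(f,v)|\le \|\bkappa\|_{\infty}\,|f|_{1,\Omega}|v|_{1,\Omega}.
\]
Because $a(\cdot,\cdot)$ is coercive on $\rV$, Lax--Milgram gives a unique $Tf$ with $\|Tf\|_{2,\Omega}\lesssim |f|_{1,\Omega}$. I will equip $\rV$ with the inner product $a(\cdot,\cdot)$. Then the symmetry of $\bkappa$ gives
\[
a(Tf,g)=b(f,g)=b(g,f)=a(Tg,f),
\]
so $T$ is self-adjoint with respect to $a$.

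\emph{Compactness.} The bound above shows that $T$ depends only on $|f|_{1,\Omega}$. Hence $T$ factors as
\[
\rV\hookrightarrow H_0^1(\Omega)\xrightarrow{\ \widetilde T\ }\rV,
\]
where $\widetilde T$ is bounded by the same Lax--Milgram argument. The embedding $H^2(\Omega)\hookrightarrow H^1(\Omega)$ is compact by Rellich--Kondrachov on the bounded domain $\Omega$, so $T$ is compact.

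\emph{Part (i).} If $a(u,v)=\lambda b(u,v)$ for all $v$ with $u\neq0$, then $\lambda\neq 0$, since $a$ is coercive. By the definition \eqref{defOfT}, $a(Tu,v)=b(u,v)=\lambda^{-1}a(u,v)$ for all $v$, and uniqueness gives $Tu=\mu u$. The converse follows by reading the same identity backwards.

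\emph{Part (ii).} This follows from the spectral theorem for compact self-adjoint operators on the Hilbert space $(\rV,a)$. The spectrum is real, and every nonzero point of it is an eigenvalue of finite multiplicity. The nonzero eigenvalues form a sequence that can only accumulate at $0$. Since $\rV$ is infinite-dimensional, $0$ belongs to the spectrum. The sequence is infinite because $b$ is not identically zero on infinite-dimensional subspaces: $\bkappa\neq\bzero$, so $\bkappa$ is nonzero on a set of positive measure. If the paper allows $\ker T\neq\{0\}$ when $\bkappa$ is indefinite, only "at most countable" is needed.

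\emph{Part (iii).} I expect this to be the main, though mild, obstacle. For $\mu\neq0$ I will show that $\mu$ is an eigenvalue of $\widetilde T$ if and only if it is one of $T$. If $\widetilde T f=\mu f$ with $f\in H_0^1(\Omega)$, then $f=\mu^{-1}\widetilde T f\in \rV$, and $Tf=\mu f$. The converse is immediate because $\widetilde T$ restricted to $\rV$ is $T$. Since $\widetilde T$ is also compact (it is $T$ composed with the compact embedding after the range map), its nonzero spectrum consists only of eigenvalues. Finally, $0$ lies in both spectra, because neither space is finite-dimensional. It remains to check that the resolvents agree for $\mu\neq0$; this reduces to the eigenvalue argument via the Fredholm alternative.
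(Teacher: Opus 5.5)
Most of your argument is sound, and it is the standard reasoning the paper only invokes: the paper gives no proof here and defers to the cited works. The correct ingredients are:
\begin{itemize}
\item Lax--Milgram with $\|Tf\|_{2,\Omega}\lesssim|f|_{1,\Omega}$;
\item self-adjointness in the $a(\cdot,\cdot)$ inner product;
\item Riesz--Schauder theory;
\item for (iii), the observation that an eigenfunction of $\widetilde T$ with $\mu\neq0$ satisfies $f=\mu^{-1}\widetilde Tf\in\rV$.
\end{itemize}
Your compactness route, factorising through $H_0^1(\Omega)$ and using Rellich, differs from the $H^{2+s}(\Omega)$ regularity the paper relies on elsewhere. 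It is the cleaner choice here: it needs no elliptic regularity on the polytope, and it is the same mechanism that makes $\widetilde T$ well defined. Parts (i) and (iii) are fine as written.

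\textbf{The gap is in (ii).} Your reason for the sequence $\{\mu_k\}$ being infinite fails. The claim that $b$ is not identically zero on infinite-dimensional subspaces is false under the stated hypotheses.
\begin{itemize}
\item $\bkappa=\mathrm{diag}(\chi_{\{y>1/2\}},0)$ on $(0,1)^2$ is symmetric, bounded and divergence free, yet $b\equiv0$ on $H_0^2(\{y<1/2\})$.
\item For the shear load of Example~3, $b(\phi(x)\psi(y),\phi(x)\psi(y))=2\int\phi\phi'\,dx\int\psi\psi'\,dy=0$. Hence such products with disjointly supported $\phi_k$ span an infinite-dimensional $b$-isotropic subspace.
\end{itemize}

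What you actually need is that $T$ has infinite rank. Since each nonzero eigenvalue has finite multiplicity, infinite rank forces infinitely many of them. This follows from a local argument.
\begin{itemize}
\item Suppose $b(f,g)=0$ for all $f,g\in C_c^\infty(B)$, with $B\subset\Omega$ a ball. Take $g=\tfrac12(x^{\top}Mx)\theta$ with $M$ symmetric and a cutoff $\theta\in C_c^\infty(B)$ equal to $1$ near $\mathrm{supp}\,f$.
\item Since $\bdiv\bkappa=\bzero$ and $\bkappa$ is symmetric, $\vdiv(\bkappa Mx)=\bkappa:M$ in the distributional sense. Hence $0=b(f,g)=-\int_B f\,\bkappa:M$ for all $f$ and all symmetric $M$, i.e.\ $\bkappa=\bzero$ a.e.\ in $B$.
\item Now choose infinitely many disjoint balls $B_j$, each meeting $\{\bkappa\neq\bzero\}$ in positive measure, and $f_j\in C_c^\infty(B_j)$ with $b(f_j,\cdot)\neq0$ on $C_c^\infty(B_j)$.
\item The functionals $b(f_j,\cdot)=a(Tf_j,\cdot)$ are localised in disjoint sets, so $\{Tf_j\}$ is linearly independent.
\end{itemize}

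Your fallback remark also conflates kernel and rank. A nontrivial $\ker T$ does occur, e.g.\ in the first example above. It leaves the nonzero spectrum infinite. Only finite rank would make it finite, and the argument above excludes that.
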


\setcounter{equation}{0}
\section{A priori error analysis for the discrete spectral problem using virtual elements
}
\label{SEC:DISCRETE}
In this section, we introduce the VEM spaces on polygonal/polyhedral meshes related to the discretisation of \eqref{eq:MPr}. First, we define the virtual spaces and projections over facets (which in 2D reduces to the local space over polygons). Then, the bulk spaces are defined and glued together to construct the global VEM space. We refer to \cite{ABSV2016,BDR2019C1Polyhedral} for further details regarding the construction of the discrete spaces and well-posedness of the discrete problem.

Let $\Omega^h$ be a collection of polygonal/polyhedral meshes on $\Omega$ and $\mathcal{F}^h$ be the set of all facets (edges in 2D and faces in 3D ). The diameter of a polygon/polyhedron $K$ is denoted by $h_K$ and the diameter of a facet $f$ is given by $h_f$. The maximum diameter of elements in $\Omega^h$ is represented by $h$. It is assumed that there exists a uniform constant $\rho>0$ such that
\begin{enumerate}[label={\textbf{(M\arabic*)}}, align=left, leftmargin=*, labelwidth=!, labelsep=1em]
    \item \label{A1} Every element $K$ is star-shaped with respect to a ball with a radius greater than $\rho h_K$. 
    \item \label{A2} Every facet $f\in \partial K$ is star-shaped with respect to a ball with a radius greater than $\rho h_K$.
    \item \label{A3} Every facet $f\in \partial K$ satisfies the inequality $h_f \geq \rho h_K$. 
\end{enumerate}

We split the set of all facets as $\mathcal{F}^h = \mathcal{F}^h_\Omega \cup \mathcal{F}^h_{\Gamma}$, where $\mathcal{F}^h_\Omega = \{ f\in \mathcal{F}^h: f\subset \Omega\}$, $\mathcal{F}^h_{\Gamma} = \{ f\in \mathcal{F}^h : f\subset \Gamma\}$. The set of facets of $K\in \Omega^h$ is denoted as $\mathcal{F}^h(K)$, who in turn are classified as interior and boundary facets, denoted by $\mathcal{F}^h_\Omega(K)$ and $\mathcal{F}^h_{\Gamma}(K)$, respectively. The set of elements $K$ that share $f$ as a facet is denoted by $\Omega^h_f$ and the normal jump operator is defined as usual by $[\![\nabla u \cdot \boldsymbol{n}_f]\!]:= (\nabla u|_{K} - \nabla u|_{K'})|_f \cdot \boldsymbol{n}_f$, where $K,K' \in \Omega^h_f$, and $\boldsymbol{n}_f$ is the outward normal vectors of $f$ with respect to $\partial K$. 
In addition, we denote by $\mathcal{V}_{\Gamma}^h$ and $\mathcal{V}_{\Omega^o}^h$ the sets of boundary and interior vertices in $\overline{\Omega^h}$, respectively. 
On the other hand, $\Delta_{f}$ and $\nabla_{f}$ denote the respective Laplace and gradient operators in the local facet coordinates and $\partial_{{\boldsymbol{n}}_f^e}$ denotes the normal derivative on each ridge $e\in \partial f$ with respect to $f$ (similarly for $\partial_{{\boldsymbol{n}}_K^f}$). 

\subsection{Polynomial Projection operators}
The space of polynomials of total degree at most $k$ defined locally on $K\in \Omega^h$ (or facet $f\in \mathcal{F}^h$) is denoted by $\rP_k (K)$, and its vector counterparts is denoted by $\mathbf{P}_k(K)$. We also consider the standard notation $\rP_{-1}(K)=\{0\}$. In this paper, we consider the lowest-case order VEM discretisation where $k=2$. Then, we define local polynomial projection operators in an element $K\in \Omega^h$, as follows:
\begin{itemize}
        \item The vector $\bL^2$ projection $\bPi_{K}^{0}:\bL^2(K)\rightarrow \bP_{1}(K)$ with
    \begin{align}\label{PiVector}
        \int_K \left(\bv-\bPi_{K}^{0}\bv\right)\cdot \bp_{1} = 0, \ \forall \bp_{1} \in \bP_{1}(K).
    \end{align}
    \item The $\rH^1$-energy projection $\Pi_{K}^{\nabla}:\rH^{1}(K)\to\rP_{2}(K)$ is defined as
    \begin{subequations}
        \begin{align*}
            \int_K \nabla (\Pi_{K}^{\nabla} v_h -v_h)\cdot \nabla p_2  &= 0,\quad \forall p_2\in \rP_2(K),\\
            \int_{\partial K} (\Pi_{K}^{\nabla} v_h -v_h) &= 0.
        \end{align*}
    \end{subequations}
    \item The $\rH^2$-energy projection is defined as 
    $\Pi_{K}^{\nabla^2}:\rH^2(K)\to\rP_{2}(K)$ such that
    \begin{subequations}
        \begin{align*}
            \int_K \nabla^2 (\Pi_{K}^{\nabla^2} v_h -v_h): \nabla^2 p_2 &= 0, \quad \forall p_2\in \rP_2(K),\\
            \int_{\partial K} (\Pi_{K}^{\nabla^2} v_h -v_h)p_1&=0, \quad \forall p_1\in \rP_1(f).
        \end{align*}
    \end{subequations}
\end{itemize}
Notice that, the previous definitions are also valid for facets $f\in \mathcal{F}^h$, in this case we simply write $\Pi_{f}^{\nabla}$ and $\Pi_{f}^{\nabla^2}$. Moreover, the projections operators previously defined are computable directly from the degrees of freedom of the VEM discrete spaces (to be specified in Section~\ref{sec:discrete_spaces}), we refer to \cite{ABSV2016,BDR2019C1Polyhedral} for further details. 

We finalise by stating a result involving classical polynomial approximation theory, written for a general polynomial order of approximation in the elements $K\in \Omega^h$. Recall that similar estimates can be deduced for facets $f\in \mathcal{F}^h$ and vector valued functions (see e.g. \cite{BS-2008}). 
\begin{proposition}[polynomial approximation] \label{prop:est_poly}
    Given $K\in \Omega^h$, suppose that $v\in  \rH^{s}(K)$, with  $1 \leq s \leq k+1$. Then, there exist $v_\pi \in \rP_k(K)$ and a positive constant that depends only on $\rho$ ({cf.} \ref{A1}-\ref{A3}) such that for $0\leq r\leq s$ the following estimate holds
    \begin{align*}
        |v - v_\pi|_{r,K} &\lesssim  h_K^{s-r}|v|_{{s},K}.
    \end{align*} 
\end{proposition}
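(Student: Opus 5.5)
The plan is to prove Proposition~\ref{prop:est_poly} through the averaged Taylor polynomial and the Bramble--Hilbert lemma, following \cite{BS-2008}. By \ref{A1}, $K$ is star-shaped with respect to a ball $B\subset K$ of radius at least $\rho h_K$. I would take $v_\pi := Q^{k}v$, the averaged Taylor polynomial of degree $k$ of $v$ over $B$. This is obtained by averaging the degree-$k$ Taylor expansion at $y$ against a smooth cut-off function supported in $B$ and normalised to unit mass. Thus $v_\pi\in\rP_k(K)$, and its construction is linear and well defined for $v\in\rH^{s}(K)$ with $s\geq 1$.

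\textbf{Step 1 (reference estimate).} First I would work on a domain $\widehat K$ of unit diameter that is star-shaped with respect to a ball of radius at least $\rho$. On such a domain, the Bramble--Hilbert lemma in Sobolev's integral-remainder form gives
\begin{align*}
|\widehat v - Q^{k}\widehat v|_{r,\widehat K}\leq C(\rho,k,d)\,|\widehat v|_{s,\widehat K},\qquad 0\leq r\leq s\leq k+1 .
\end{align*}
The argument has two parts. Since $s-1\leq k$, the map $Q^k$ reproduces $\rP_{s-1}$. The remainder $\widehat v-Q^{s-1}\widehat v$ is a Riesz-potential-type integral of the $s$-th derivatives, and it is bounded in $\rH^{r}$ with a constant depending only on the chunkiness parameter $\mathrm{diam}(\widehat K)/\rho$. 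I would also use that $D^{\alpha}Q^{k}v=Q^{k-|\alpha|}D^{\alpha}v$, so the same bound can be applied to each derivative.

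\textbf{Step 2 (scaling).} Next I would apply the affine dilation $x=x_0+h_K\widehat x$, which maps $\widehat K$ onto $K$ and preserves the star-shapedness ratio. Averaged Taylor polynomials commute with this dilation. The seminorms transform as $|v|_{m,K}=h_K^{d/2-m}|\widehat v|_{m,\widehat K}$. Combining these with Step 1 gives $|v-v_\pi|_{r,K}\lesssim h_K^{s-r}|v|_{s,K}$, with a constant depending only on $\rho$ (and on $k,d$, which are fixed).

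The main obstacle is proving that the Step 1 constant is uniform over all admissible shapes and depends only on $\rho$, since a compactness argument over the shapes is not available. I would handle this directly with the explicit kernel bound from \cite[Ch.~4]{BS-2008}. There the kernel of the remainder is dominated by $C(\rho)|x-y|^{s-d}$, and Young's inequality then gives the uniform $L^2$ bound. For non-integer $s$ I would interpolate between the integer cases. The facet and vector-valued versions follow in the same way: for facets I would apply the argument with \ref{A2}--\ref{A3}, and for vector fields componentwise.
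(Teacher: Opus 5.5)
Your proposal is correct and follows the same route the paper relies on. The paper gives no proof of its own and simply invokes the classical averaged-Taylor-polynomial/Bramble--Hilbert theory of \cite{BS-2008}, which is exactly your argument: a reference estimate with a constant depending only on the chunkiness, followed by scaling. The one step you treat lightly is the non-integer case of $s$ (and $r$); the interpolation there is standard but needs some care with the seminorms, and it is carried out in Dupont and Scott's original work on averaged Taylor polynomials.
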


\subsection{Discrete spaces}\label{sec:discrete_spaces}
We start by considering the enhanced space on facets $f\in \partial K$ (see \cite{ABSV2016}), given by
\begin{align*}
    \rV^h_{2\mathrm{D}}(f):=\Big\{
    v_h\in \rH^2(f) : \,&\Delta^2_{f}v_h\in\rP_{2}(f), \\
    & v_h|_{\partial f}\in C^0(\partial f),\, v_h|_e\in\rP_3(e),\quad \forall e\in\partial f,\\
    & \nabla_{f} v_h|_{\partial f}\in [C^0(\partial f)]^2,\, \partial_{{\boldsymbol{n}}_f^e} v_h|_e\in\rP_1(e),\quad\forall e\in\partial f, \\
    &\int_f \Pi_{f}^{\nabla^2}v_h \,p_2=\int_f v_h\,p_{2}, \quad \forall p_{2}\in \rP_{2}(f) \Big\}.
\end{align*}
Notice that in a 2D setting, one can identify the facets $f$ and ridge $e\in \partial f$ in the previous space as polygonal elements $K$ and edges of the polygon $e$, respectively. Such identifications allow us to define directly the global 2D VEM space by 
\begin{align*}
    \rV^h_{2\mathrm{D}}  :=\left\{v_h \in  \rV  :\ v_h|_K\in  V^h_{2\mathrm{D}}(K)\right\}.
\end{align*}
We remark that the space $\rV^h_{2\mathrm{D}}(f)$ is richer to its 3D counterpart introduced in \cite{BDR2019C1Polyhedral}, where the condition $\Delta_f^2 v_h \in \rP_1(f)$ is imposed and the enhancement property is enforced only with respect to linear polynomials.

Next, following the hierarchical construction of VEM spaces in 3D, we introduce an additional enhanced space on facets in order to represent normal derivatives of virtual functions in facets, as follows
\begin{align*}
    \rV^h_{\partial}(f):=\Big\{
    v_h\in \rH^1(f):\, &\Delta_{f} v_h \in \rP_0(f), \\
    &v_h|_{\partial{f}}\in C^0(\partial f),\, v_h|_{e}\in \mathbb{P}_1(e), \quad \forall e\in \partial f,\\
    &\int_f \Pi_{f}^{\nabla}v_h=\int_f v_h \Big\}.
\end{align*}
With these building blocks, we are ready to define the local three-dimensional VEM space on a polyhedron $K\in\Omega^h$ as
\begin{align*}
    \rV^h_{\mathrm{3D}}(K):=\Bigg\{v_h\in \rH^2(K) :\, &\Delta^2v_h\in\rP_{2}(K),\\
    &v_h|_{e}\in C^0(e),\, \nabla v_h|_{e}\in [C^0(e)]^3,\quad \forall e\in\bigcup_{f\in\partial K}\partial f,\\
    &v_h|_f\in \rV^h_{2\mathrm{D}}(f),\, \partial_{{\boldsymbol{n}}_K^f} v_h|_f\in  \rV^h_{\partial}(f),\quad  \forall f\in\partial K,\\
    &\int_K \Pi_{K}^{\nabla^2}v_h\,p_2 = \int_K  v_h\,p_2, \quad \forall p_2\in \rP_2(K)\Bigg\},
\end{align*}
Similarly, the global space is obtained by gluing together the local spaces in the following way
\begin{equation*}
\rV^h_{3\mathrm{D}}:=\Big\{v_h\in \rV: v_h|_{K}\in V^h_{3\mathrm{D}}({K})\Big\}\,.
\end{equation*}

Following \cite{ABSV2016,BDR2019C1Polyhedral}, a unisolvent set of degrees of freedom for $\rV^h(K)$ can be chosen as
\begin{itemize}
    \item The values of $v_h$ at the vertices of $K$,
    \item The values of $\nabla v_h$ at the vertices of $K$.
\end{itemize}
Thanks to this selection, we have that
\begin{align*}
\mathrm{dim}(\rV^h_{2\mathrm{D}})
=\begin{cases}
3\#(\mathcal{V}_{\Omega^o}^h)&\mbox{for {\bf CP}},\\
3\#(\mathcal{V}_{\Omega^o}^h)-2\#(\mathcal{V}_{\Gamma}^h)&\mbox{for {\bf SSP}}.
\end{cases}
\end{align*}
and
\begin{align*}
\mathrm{dim}(\rV^h_{3\mathrm{D}})
=\begin{cases}
4\#(\mathcal{V}_{\Omega^o}^h)&\mbox{for {\bf CP}},\\
4\#(\mathcal{V}_{\Omega^o}^h)-3\#(\mathcal{V}_{\Gamma}^h)&\mbox{for {\bf SSP}}.
\end{cases}
\end{align*}

Finally, we introduce the notation $\rV_h(K)$ (resp. $\rV_h$) to denote simultaneously the local spaces $\rV^h_{\mathrm{2D}}(K)$ and $\rV^h_{3\mathrm{D}}(K)$ (resp. global spaces $\rV^h_{\mathrm{3D}}$ and $\rV^h_{2\mathrm{D}}$).  
Furthermore, the previous hierarchical construction is $C^1$ continuous along the elements' facets, which turns into discrete functions that are $C^1$ continuous globally.

\subsection{Discrete weak formulation}
We start by defining locally the computable discrete bilinear forms for all $u_h,v_h\in\rV_h$ as follows
\begin{align*}
    a^h_K(u_h,v_h)&:=\int_{K}\nabla^2 (\Pi_{K}^{\nabla^2} u_h): \nabla^2 (\Pi_{K}^{\nabla^2} v_h) + S_K^{\nabla^2}(u_h-\Pi_{K}^{\nabla^2} u_h,v_h-\Pi_{K}^{\nabla^2} v_h), \\
    b^h_K(u_h,v_h)&:=\int_{K}(\bkappa\bPi_{K}^{0}(\nabla u_h))\cdot \bPi_{K}^{0}(\nabla v_h),
\end{align*}
where the operator $S_{K}^{\nabla^2}
:\rV\times\rV\to \mathbb{R}$ is any symmetric positive bilinear form that satisfies  
\begin{alignat}{2}
    \alpha_{*}a_{K}(v_h,v_h)&\leq S_{K}^{\nabla^2}(v_h,v_h)\leq  \alpha^{*}a_{K}(v_h,v_h),\quad  &&\forall v_h\in \ker(\Pi_{K}^{\nabla^2}),\label{eq:stabA}
\end{alignat}
here, 
$\alpha_*,\alpha^*$
are positive constants independent of the element size $h_K$. Notice that the properties of the stabilization operator are given in \eqref{eq:stabA}
is crucial to prove the continuity and ellipticity of $a^h(\cdot,\cdot)$ , $b^h(\cdot,\cdot)$ 
(defined as the sum of the local contributions for all $K\in\Omega^h$) and ellipticity of $a^h(\cdot,\cdot)$, we refer to \cite{DV_camwa2022,MV2} for further details.

Now we are ready to introduce the VEM discrete weak formulation of \eqref{eq:MPr} read as follows: find $(\lambda_h,u_h)\in \mathbb{R}\times (\rV_h\setminus \{0\})$ such that
\begin{align}\label{WDPr}
    a^h(u_h,v_h) = \lambda_h b^h(u_h,v_h), \quad \forall v_h\in \rV_h.
\end{align}

\subsection{The a priori error analysis}
\label{Sec:aPriori}
We start recalling the orthogonal projection $\mathcal{P}_h^{\nabla}:H_0^1(\Omega)\to V_h$ defined as follows: For all $\phi\in H_0^1(\Omega)$ we set  $\mathcal{P}_h^{\nabla}\phi\in V_h$ as unique solution of 
$$ \int_\Omega \nabla (\mathcal{P}_h^{\nabla} \phi-\phi) \cdot \nabla v_h,\quad  \forall v_h\in V_h.$$
\begin{remark}\label{RemDefPos}
    If $\bkappa$  additionally satisfies the positive definiteness assumption, then the operator $\mathcal{P}_h^{\nabla }:H_0^1(\Omega)\to V_h$ given by     $$b(\mathcal{P}_h^{\nabla}\phi-\phi,v_h)=0,\quad \forall v_h \in V_h,$$
    is well defined.
\end{remark}

Next, we consider the discrete source operators 
\begin{equation*}
    \begin{array}{rl}
&T_h:V_h\longrightarrow V_h  \\
& \hspace*{0.725cm} f_h \longmapsto  T_hf_h      
    \end{array}
\qquad 
\mbox{and}
\qquad 
 \begin{array}{rl}
&\widehat{T}_h:H_0^1(\Omega)\longrightarrow V_h\subset H_0^1(\Omega)  \\
& \hspace*{1cm} f \hspace*{0.475cm}\longmapsto  \widehat{T}_hf:=T_h\mathcal{P}_h^{\nabla}f 
\end{array},
\end{equation*}
where $T_hf_h$ and $T_h\mathcal{P}_h^{\nabla}f$ are the unique solutions of the following discrete source problems
\begin{equation}\label{defTh_hat}
 a^h(T_hf_h,v_h)=b^h(f_h,v_h)
\quad
\mbox{and}
\quad
 a^h(T_h\mathcal{P}_h^{\nabla}f,v_h)=b^h(\mathcal{P}_h^{\nabla}f,v_h),
\end{equation}
for all $v_h\in V_h$, respectively. It is well known that the operators $\widehat{T}_h$ and $T_h$ satisfy  $\spec(\widehat{T}_h)=\spec(T_h)$ and they have the same eigenfunctions (see for instance \cite{DV_camwa2022}). In addition, if $Z_h:=\{u_h\in V_h: b^h(u_h,v_h)=0 \quad \forall v_h\in V_h\}$, we readily see that the spectrum $\mathrm{sp}(T_h)$ consists of $\mathrm{dim}(V_h)-\mathrm{dim}(Z_h)=:M$ eigenvalues repeated according to their respective multiplicity. Moreover, it holds that $\sp(T_h)=\{\mu_h\}_{k=1}^{M}\cup \{ 0 \}$.  

On the other hand, for $(T,\mu)$ and $(T_h,\mu_h)$, we recall the  definitions of their spectral projections $E$ and $E_h$ as
\begin{align}
    &E:=E(\mu)=(2\pi)^{-1}\int_\Omega (z-T)^{-1}dz,\label{defOpE}\\
    &E_h:=E_h(\mu)=(2\pi)^{-1}\int_\Omega (z-T_h)^{-1}dz\nonumber,
\end{align}
where $\mu_h:=1/\lambda_h, (\lambda_h\neq 0)$. Finally, we establish error estimates for the solution of the source problems in $H^2$ and $H^1$ norms (see for instance \cite{dassi2025posteriorierrorestimatesc1} and \cite{MV2}).
\begin{lemma}\label{Lema8ApVEP}
    For $s\in (1/2,1]$ it holds
    \begin{align}
        &||(T-\widehat{T}_h)(f)||_{2,\Omega}\lesssim h^s||f||_{2,\Omega},\quad \forall f\in V,\nonumber\\
        &||(\widetilde{T}-\widehat{T}_h)(f)||_{1,\Omega}\lesssim h^s||f||_{1,\Omega},\quad \forall f\in H_0^1(\Omega).\nonumber
    \end{align}
\end{lemma}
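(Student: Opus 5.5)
We prove both bounds with the same two-step template. First we split the error with the triangle inequality through a discrete intermediate. Then we control the two pieces separately: a consistency/Strang-type argument for the VEM source problem, and the approximation properties of $\mathcal{P}_h^{\nabla}$ together with elliptic regularity of $T$.

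Fix $f\in V$ and write
\[
(T-\widehat{T}_h)f = (Tf - T_h\mathcal{P}_h^{\nabla} f).
\]
Let $w:=Tf$. By \eqref{defOfT} and standard regularity of the fourth-order problem on polytopal domains, $w\in H^{2+s}(\Omega)$ with $\|w\|_{2+s,\Omega}\lesssim \|f\|_{1,\Omega}$ for some $s\in(1/2,1]$. Here we use that $b(f,\cdot)$ is a functional on $H^1_0$, since $\bdiv\bkappa=\bzero$ gives $-\vdiv(\bkappa\nabla f)\in H^{-1}$.

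Set $w_h:=T_h\mathcal{P}_h^{\nabla}f$. We apply the first Strang lemma for VEM, using the ellipticity of $a^h$ on $V_h$ (from \eqref{eq:stabA}):
\[
\|w-w_h\|_{2,\Omega}\lesssim \|w-w_I\|_{2,\Omega}+\sum_K|w-w_\pi|_{2,K}+\sup_{v_h\in V_h}\frac{b(f,v_h)-b^h(\mathcal{P}_h^{\nabla}f,v_h)}{\|v_h\|_{2,\Omega}},
\]
where $w_I$ is the VEM interpolant and $w_\pi$ is a piecewise quadratic approximation. The first two terms are $O(h^s)\|w\|_{2+s}$ by Proposition~\ref{prop:est_poly} and the standard $C^1$-VEM interpolation estimate.

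For the consistency term, split
\[
b(f,v_h)-b^h(\mathcal{P}_h^{\nabla}f,v_h)=b(f-\mathcal{P}_h^{\nabla}f,v_h)+\big(b-b^h\big)(\mathcal{P}_h^{\nabla}f,v_h).
\]
The first piece is bounded by $\|f-\mathcal{P}_h^{\nabla}f\|_{1}\|v_h\|_{1}$. Since $\mathcal{P}_h^{\nabla}$ is the $H^1$-orthogonal projection, this is at most $\|f-f_I\|_1\lesssim h\|f\|_{2}$ when $f\in V$. The second piece is handled elementwise: inserting $\bPi_K^0$ and using \eqref{PiVector}, it is bounded by $\|(I-\bPi^0_K)\nabla \mathcal{P}_h^{\nabla}f\|_{0,K}$ times $\|(I-\bPi^0_K)\nabla v_h\|_{0,K}$, plus a term with $\bkappa$ that is only $L^\infty$. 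Each factor gains a power of $h$ via Proposition~\ref{prop:est_poly}, since $\nabla v_h\in H^1(K)$. This yields $h^s\|f\|_{2,\Omega}$ and proves the first estimate.

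For the second estimate, take $f\in H^1_0(\Omega)$ only. We again write $\widetilde{T}f-\widehat{T}_hf$ and bound it in the $H^1$ norm by the $H^2$ norm of the same quantity, so the Strang argument above applies. The difference is that the consistency term can no longer use $\|f\|_2$. For $b(f-\mathcal{P}_h^{\nabla}f,v_h)$ we use the definition of $\mathcal{P}_h^{\nabla}$ (or, in the positive definite case of Remark~\ref{RemDefPos}, its $b$-orthogonality), which kills the term whenever $\bkappa$-orthogonality holds. Otherwise we use an Aubin–Nitsche duality argument: the solution of the dual problem has $H^{2+s}$ regularity, which transfers the $h^s$ factor onto the test function instead of $f$. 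The remaining $b-b^h$ term is bounded as before using only $\|\mathcal{P}_h^{\nabla}f\|_1\le\|f\|_1$ and $h$-approximation of $\nabla v_h$.

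The main obstacle is this last step. With $\bkappa$ only in $L^\infty$ and not necessarily definite, the term $b(f-\mathcal{P}_h^{\nabla}f,v_h)$ does not vanish and $f$ has no extra regularity. We would first try a duality argument: measure the error in $H^1$ via the adjoint problem $a(v,z)=b(\phi,v)$ with $H^{2+s}$ regularity of $z$. If that fails, we would fall back to citing the corresponding estimates in \cite{MV2,dassi2025posteriorierrorestimatesc1}, whose proof structure is identical.
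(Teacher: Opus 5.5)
Your first estimate is fine in outline. Two small repairs: $f\in H^2(\Omega)$ has no pointwise vertex gradients, so $f_I$ must be a quasi-interpolant. Also, bound $\|(I-\bPi_K^0)\nabla\mathcal{P}_h^{\nabla}f\|_{0,K}$ through $|f-\mathcal{P}_h^{\nabla}f|_{1,K}+\|(I-\bPi_K^0)\nabla f\|_{0,K}$, since you have no $H^2$-stability of $\mathcal{P}_h^{\nabla}$.

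The genuine gap is in the second estimate, at the claim that the $b-b^h$ term is bounded ``as before''. With $u_h:=\mathcal{P}_h^{\nabla}f$ and $\bkappa$ symmetric,
\begin{align*}
b_K(u_h,v_h)-b_K^h(u_h,v_h)=\int_K\bkappa\nabla u_h\cdot(\nabla v_h-\bPi_K^0\nabla v_h)+\int_K(\nabla u_h-\bPi_K^0\nabla u_h)\cdot\bkappa\,\bPi_K^0\nabla v_h .
\end{align*}
The first integral is $\lesssim h_K|u_h|_{1,K}|v_h|_{2,K}$, which is fine. In your first estimate the second integral got its factor $h$ from $u_h$, via $f\in H^2(\Omega)$. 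For $f\in H_0^1(\Omega)$ the factor $\|(I-\bPi_K^0)\nabla u_h\|_{0,K}$ is only bounded by $|u_h|_{1,K}$. Moreover, \eqref{PiVector} cannot shift the projection onto the $v_h$ side, because $\bkappa\bPi_K^0\nabla v_h\notin\bP_1(K)$ unless $\bkappa$ is constant on $K$. So this term is only $O(\|f\|_{1,\Omega}\|v_h\|_{2,\Omega})$.

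A duality argument does not help here, because the missing smoothness sits in $\bkappa$, not in the test function. You need a hypothesis the statement leaves implicit. One option is $\bkappa$ piecewise constant on $\Omega^h$, in which case the term vanishes. Another is $\bkappa|_K\in W^{1,\infty}(K)$: writing the term as $\int_K(I-\bPi_K^0)\nabla u_h\cdot(I-\bPi_K^0)(\bkappa\bPi_K^0\nabla v_h)$ then gives $\lesssim h_K\|\bkappa\|_{W^{1,\infty}(K)}|u_h|_{1,K}\|v_h\|_{2,K}$.

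The identity of Lemma~\ref{LemAuxiliar}, which would make your claim immediate, rests on the same restriction. In its proof the term $\int_K\bPi_K^0\nabla w_h\cdot\bkappa(\nabla v_h-\bPi_K^0\nabla v_h)$ is dropped, and it vanishes only if $\bkappa\bPi_K^0\nabla w_h\in\bP_1(K)$. Falling back on \cite{dassi2025posteriorierrorestimatesc1} does not cover this either, since that reference treats the vibration problem, where no $\bkappa$ appears.

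Your treatment of $b(f-\mathcal{P}_h^{\nabla}f,v_h)$ is also not yet an argument. Once the $H^1$ error is bounded by the $H^2$ error, this term sits inside a supremum over arbitrary $v_h\in V_h$. There is then no fourth-order dual solution to exploit, and $\mathcal{P}_h^{\nabla}$ is $H^1$-orthogonal, not $b$-orthogonal.

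What does work is conformity, $v_h\in V_h\subset H^2(\Omega)$, combined with $\bkappa=\bkappa^{\top}$ and $\bdiv\bkappa=\bzero$. Together these give $\vdiv(\bkappa\nabla v_h)=\bkappa:\nabla^2v_h\in L^2(\Omega)$. Let $y\in H_0^1(\Omega)$ solve $\int_\Omega\nabla y\cdot\nabla\phi=\int_\Omega\bkappa\nabla v_h\cdot\nabla\phi$ for all $\phi\in H_0^1(\Omega)$. This is a Laplace problem with $L^2$ data bounded by $\|\bkappa\|_{\infty,\Omega}|v_h|_{2,\Omega}$. The orthogonality of $\mathcal{P}_h^{\nabla}$ then gives
\begin{align*}
b(f-\mathcal{P}_h^{\nabla}f,v_h)=\int_\Omega\nabla(f-\mathcal{P}_h^{\nabla}f)\cdot\nabla(y-z_h)\le|f|_{1,\Omega}\,|y-z_h|_{1,\Omega},\qquad\forall z_h\in V_h .
\end{align*}
The power of $h$ now comes from approximating $y$ in $V_h$, that is, from the test function. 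This is the transfer you were after, but it runs through a second-order auxiliary problem, not the $H^{2+s}$ fourth-order dual you invoked. The resulting rate depends on the regularity of $y$ and on how well $V_h$ approximates it in $H^1$. This needs separate care in the \textbf{CP} case, where $V_h\subset H_0^2(\Omega)$ while $y$ generally has $\partial_{\bn}y\neq0$.
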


We now prove an useful relation between the continuous and discrete bilinear forms $b(\cdot,\cdot)$ and~$b^h(\cdot,\cdot)$.
\begin{lemma}\label{LemAuxiliar}
    For all $v_h,w_h\in V_h$ it holds
    \begin{equation*}
        b(v_h,w_h)-b^h(v_h,w_h)=\sum\limits_{K\in \Omega^h} \int_K \Big(\bkappa \nabla v_h-\bPi_K^0(\bkappa \nabla v_h) \Big)\cdot\Big( \nabla w_h - \bPi_K^0 \nabla w_h \Big) 
    \end{equation*}
\end{lemma}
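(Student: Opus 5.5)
The plan is to verify the identity element by element. Since both $b(\cdot,\cdot)$ and $b^h(\cdot,\cdot)$ are sums of local contributions, it suffices to fix $K\in\Omega^h$ and show
\begin{equation*}
\int_K \bkappa\nabla v_h\cdot\nabla w_h-\int_K \bkappa\,\bPi_K^0\nabla v_h\cdot\bPi_K^0\nabla w_h
=\int_K \Big(\bkappa \nabla v_h-\bPi_K^0(\bkappa \nabla v_h) \Big)\cdot\Big( \nabla w_h - \bPi_K^0 \nabla w_h \Big).
\end{equation*}
The only tool is the orthogonality \eqref{PiVector} of $\bPi_K^0$: for any $\bv\in\bL^2(K)$, $\bv-\bPi_K^0\bv$ is $\bL^2(K)$-orthogonal to $\bP_1(K)$.

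\textbf{Step 1: expand the right-hand side.} Expanding the product gives four terms:
\begin{equation*}
\int_K\bkappa\nabla v_h\cdot\nabla w_h-\int_K\bkappa\nabla v_h\cdot\bPi_K^0\nabla w_h-\int_K\bPi_K^0(\bkappa\nabla v_h)\cdot\nabla w_h+\int_K\bPi_K^0(\bkappa\nabla v_h)\cdot\bPi_K^0\nabla w_h .
\end{equation*}
Since $\bPi_K^0(\bkappa\nabla v_h)\in\bP_1(K)$, \eqref{PiVector} applied to $\bv=\nabla w_h$ shows that the third and fourth terms cancel. What remains is $\int_K\bkappa\nabla v_h\cdot\nabla w_h-\int_K\bkappa\nabla v_h\cdot\bPi_K^0\nabla w_h$.

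\textbf{Step 2: move the projection onto $v_h$.} Using the symmetry $\bkappa=\bkappa^\top$, rewrite $\int_K\bkappa\nabla v_h\cdot\bPi_K^0\nabla w_h=\int_K\nabla v_h\cdot\bkappa\,\bPi_K^0\nabla w_h$. If $\bkappa\,\bPi_K^0\nabla w_h\in\bP_1(K)$, then \eqref{PiVector} with $\bv=\nabla v_h$ lets us replace $\nabla v_h$ by $\bPi_K^0\nabla v_h$. Using symmetry once more, this yields $\int_K\bkappa\,\bPi_K^0\nabla v_h\cdot\bPi_K^0\nabla w_h$, which is exactly the local contribution to $b^h_K(v_h,w_h)$. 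Summing over $K\in\Omega^h$ then gives the claim.

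\textbf{Main obstacle.} The delicate point is Step 2: it needs $\bkappa\,\bp_1\in\bP_1(K)$ for every $\bp_1\in\bP_1(K)$. This holds when $\bkappa|_K$ is constant, which is the situation for the plane-stress tensors used in practice. Note also that $\bdiv\bkappa=\bzero$ is consistent with piecewise constant data. I would therefore make this (implicit) assumption explicit: $\bkappa$ piecewise constant on $\Omega^h$, or at least $\bkappa\bP_1(K)\subseteq\bP_1(K)$.

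For a general $\bL^\infty$ coefficient, Step 2 fails. The identity would then have to be read with an additional term,
\begin{equation*}
\int_K\bkappa\nabla v_h\cdot\bPi_K^0\nabla w_h-\int_K\bkappa\,\bPi_K^0\nabla v_h\cdot\bPi_K^0\nabla w_h,
\end{equation*}
a data-oscillation contribution that I would carry along separately and bound by $\|\bkappa-\bPi_K^0\bkappa\|_{\infty,K}$ times gradient norms.
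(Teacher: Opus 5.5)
Your argument is correct and is essentially the paper's. Both proofs rest solely on the $\bL^2(K)$-orthogonality of $\bPi_K^0$ to $\bP_1(K)$. The paper splits $b_K(v_h,w_h)-b_K^h(v_h,w_h)$ pointwise into five terms and discards the term $\int_K \bPi_K^0\nabla w_h\cdot\bkappa(\nabla v_h-\bPi_K^0\nabla v_h)$ without justification; that is precisely your Step~2. Your hypothesis $\bkappa|_K\,\bP_1(K)\subseteq\bP_1(K)$ (e.g.\ $\bkappa$ constant on each $K$) is therefore genuinely needed, and the paper uses it only implicitly, since the term need not vanish for a non-constant $\bkappa$ such as the linear one of Example~2.

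One small fix to your closing remark: bound the extra term with the elementwise mean $\bar{\bkappa}_K$ rather than $\bPi_K^0\bkappa$. Subtracting $\bar{\bkappa}_K\bPi_K^0\nabla w_h\in\bP_1(K)$ costs nothing by orthogonality, whereas $(\bPi_K^0\bkappa)\,\bPi_K^0\nabla w_h$ is in general quadratic and so is not orthogonal to $\nabla v_h-\bPi_K^0\nabla v_h$.
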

\begin{proof}
Let  $v_h,w_h\in V_h$ and  $K\in \Omega^h$. From the definition of the projector $\bPi_K^0$ (cf. \eqref{PiVector}) and some algebraic manipulations, we readily see that
\begin{align*}
b_K(v_h,w_h)-b_K^h(v_h,w_h)
& =\int_K  \Big\{ \bkappa \nabla v_h\cdot \nabla w_h - \bkappa \bPi_K^0 \nabla v_h \cdot \bPi_K^0 \nabla w_h  \Big\}  \\
&=\int_K  \Big\{ \bkappa \nabla v_h\cdot \nabla w_h - \bPi_K^0(\bkappa \bPi_K^0 \nabla v_h) \cdot  \bPi_K^0 \nabla w_h  \Big\}\nonumber\\
&=\int_K \Big\{ [\bkappa \nabla v_h - \bPi_K^0(\bkappa \nabla u_h) ]\cdot \nabla w_h + \bPi_K^0(\bkappa \nabla v_h )\cdot [\nabla w_h-\bPi_K^0 \nabla w_h]\nonumber\\
&\quad + \bPi_K^0 \nabla w_h\cdot [\bPi_K^0(\bkappa \nabla v_h)- \bkappa \nabla v_h] + \bPi_K^0 \nabla w_h\cdot \bkappa[ \nabla v_h- \bPi_K^0 \nabla v_h] \\
&\quad + \bPi_K^0 \nabla w_h\cdot[\bkappa \bPi_K^0 \nabla v_h - \bPi_K^0(\bkappa \bPi_K^0 \nabla v_h) ] 
\Big\} \nonumber\\
&= \int_K  [\bkappa \nabla v_h - \bPi_K^0(\bkappa \nabla u_h) ]\cdot \nabla w_h\\
&= \int_K  [\bkappa \nabla v_h - \bPi_K^0(\bkappa \nabla u_h) ]\cdot [\nabla w_h -\bPi_K^0 \nabla w_h]\nonumber.
 \end{align*}
By taking sum over all $K\in \Omega^h$, the result is obtained.
\end{proof}
We now introduce an error estimate for $\lambda_h$ as a consequence of Lemmas~\ref{Lema8ApVEP} and \ref{LemAuxiliar}.
\begin{theorem}\label{th:convergenceRates}
For all space $R(E)\subseteq H^{2+s}(\Omega)$ with $s \in (1/2,1]$ there exists $h_0>0$ independent of $h$ such that for all $h<h_0$ it hold
\begin{align}
    \left|\l-\l_h^{(j)}\right|&\lesssim  h^{2s},\quad j=1,\ldots,m, \quad \text{and} \nonumber\\
    |\lambda - \lambda_h^{(j)}|&\lesssim \Bigg\{|u-u_h|_{2,\Omega}^2+|u-\Pi_K^{\nabla^2} u_h|_{2,h}^2 +||\bkappa \nabla u_h - \bPi_K^0(\bkappa \nabla u)   ||_{0,\Omega} ||\nabla u_h -\bPi_K^0( \nabla u_h)  ||_{0,\Omega} \Bigg\}.\nonumber
\end{align}
\end{theorem}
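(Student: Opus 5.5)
The plan is the classical Babu\v{s}ka--Osborn spectral approximation theory, applied to the compact operators $\widetilde{T}$ and $\widehat{T}_h$ on $H_0^1(\Omega)$, combined with an algebraic identity for the eigenvalue error.

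For the first estimate, I would argue as follows. Lemma~\ref{Lema8ApVEP} gives $\|\widetilde{T}-\widehat{T}_h\|_{\mathcal{L}(H_0^1,H_0^1)}\to 0$. Hence, for $h<h_0$, exactly $m$ discrete eigenvalues $\mu_h^{(j)}$ lie inside a circle around $\mu$ (multiplicity $m$), and the gap between $R(E)$ and $R(E_h)$ tends to zero. The Babu\v{s}ka--Osborn estimate reads
\[
|\mu-\mu_h^{(j)}|\lesssim \sum_{i,k}|\langle(\widetilde{T}-\widehat{T}_h)\phi_i,\phi_k\rangle|+\|(\widetilde{T}-\widehat{T}_h)|_{R(E)}\|^2 .
\]
The squared term is $O(h^{2s})$ by Lemma~\ref{Lema8ApVEP}. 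For the inner-product term, I would rewrite it through $a$ and $b$ and obtain a sum of three contributions: the consistency error $a(\cdot,\cdot)-a^h(\cdot,\cdot)$, the consistency error $b(\cdot,\cdot)-b^h(\cdot,\cdot)$, and a product of $H^2$ errors. Each should be bounded by $h^{2s}$, using the $H^{2+s}$ regularity of $R(E)$, Proposition~\ref{prop:est_poly}, and Lemma~\ref{LemAuxiliar}. Lemma~\ref{LemAuxiliar} matters here because it expresses the $b$-error as a product of two projection errors, each $O(h^s)$. Since $\lambda=1/\mu$, the same bound holds for $\lambda-\lambda_h^{(j)}$.

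For the second, a posteriori-flavoured estimate, I would use the standard identity. Normalise $b(u,u)=1$ and take $u$ as the eigenfunction in $R(E)$ closest to $u_h$; then
\[
(\lambda_h-\lambda)\,b(u_h,u_h)=a(u-u_h,u-u_h)-\lambda\, b(u-u_h,u-u_h)+[a^h(u_h,u_h)-a(u_h,u_h)]-\lambda_h[b^h(u_h,u_h)-b(u_h,u_h)].
\]
The terms are handled as follows.
\begin{itemize}
\item The first term is at most $|u-u_h|_{2,\Omega}^2$.
\item The second term is bounded by $\|u-u_h\|_{1,\Omega}^2\lesssim|u-u_h|_{2,\Omega}^2$, by Poincar\'e.
\item For the $a$-consistency term, I would insert $\Pi_K^{\nabla^2}u_h$ elementwise. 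The polynomial part then gives $\sum_K|u_h-\Pi_K^{\nabla^2}u_h|_{2,K}^2$, and the stabilisation is bounded by the same quantity via \eqref{eq:stabA}. The triangle inequality through $u$ then yields $|u-u_h|_{2,\Omega}^2+|u-\Pi_K^{\nabla^2}u_h|_{2,h}^2$.
\item For the $b$-consistency term, Lemma~\ref{LemAuxiliar} with $v_h=w_h=u_h$ gives $\int_K(\bkappa\nabla u_h-\bPi_K^0(\bkappa\nabla u_h))\cdot(\nabla u_h-\bPi_K^0\nabla u_h)$. Because $\bPi_K^0$ is the best $\bL^2$ approximation, replacing $\bPi_K^0(\bkappa\nabla u_h)$ by $\bPi_K^0(\bkappa\nabla u)$ is allowed up to an extra term orthogonal to $\nabla u_h-\bPi_K^0\nabla u_h$. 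Cauchy--Schwarz then gives the stated product.
\end{itemize}
Finally, $b(u_h,u_h)\to b(u,u)=1$ by the gap convergence, so it is bounded below for $h<h_0$. This requires a $b$-normalisation, which means $b(u,u)>0$; if $\bkappa$ is indefinite, this sign condition has to be justified for the chosen eigenfunction.

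The main obstacle will be the choice of normalisation and the fact that $\lambda_h$ is bounded uniformly, which together keep the constants independent of $h$. A related difficulty is tracking the mismatch between $\bPi_K^0(\bkappa\nabla u_h)$ and $\bPi_K^0(\bkappa\nabla u)$ in the last term. I expect this to be resolved either by the orthogonality trick above or by absorbing the difference into the $|u-u_h|_{2,\Omega}^2$ term via Young's inequality.
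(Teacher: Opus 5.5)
Your proposal is correct and follows essentially the paper's route. The $h^{2s}$ rate comes from Babu\v{s}ka--Osborn together with Lemma~\ref{Lema8ApVEP}; the paper compresses this into a one-line assertion, and you correctly add the inner-product term it glosses over. The second bound uses the same algebraic eigenvalue identity, estimated term by term with Lemma~\ref{LemAuxiliar}. Your orthogonality remark is exactly what justifies the paper's unexplained switch from $\bPi_K^0(\bkappa\nabla u_h)$ to $\bPi_K^0(\bkappa\nabla u)$.

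The only variation is the lower bound on the left-hand side. The paper keeps $b^h(u_h,u_h)$ there and bounds it below by $|b^h(u_h,u_h)|=|\lambda_h^{-1}a^h(u_h,u_h)|\gtrsim|\lambda_h|^{-1}$, using only the ellipticity of $a^h$, which avoids your sign/normalisation worry. In your version that worry also disappears at once, even for indefinite $\bkappa$: $b(u,u)=\lambda^{-1}a(u,u)\neq 0$ for every nonzero $u\in R(E)$, so normalising $|b(u,u)|=1$, or normalising $u_h$ and using $u_h\to u$ in $H^1$, suffices.
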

\begin{proof}
The first inequality follows as a direct consequence of Lemma~\ref{Lema8ApVEP}. Regarding the second inequality, following the arguments used in the proof of the  \cite[Theorem~4.4]{MoraVelasquez2020} and applying Lemma~\ref{LemAuxiliar}, we readily see that 
\begin{align*}
    (\lambda_h^{(j)}-\lambda)b^h(u_h,u_h)&= a(u-u_h,u-u_h) - \lambda b(u-u_h,u-u_h) + \big\{  a^h(u_h,u_h)-a(u_h,u_h) \big\} \nonumber\\
    & \quad + \lambda \big\{ b(u_h,u_h)-b^h(u_h,u_h)\big\} \nonumber\\
    &\lesssim |u-u_h|^2_{2,\Omega} + ||\bPi_K^0\nabla(u-u_h)||_{0,\Omega}^2 + |u-u_h|_{2,\Omega}^2 + |u-\Pi_K^{\nabla^2}u_h|_{2,h}^2\nonumber\\ 
    &\quad +||\bkappa \nabla u_h - \bPi_K^0(\bkappa \nabla u)   ||_{0,\Omega} ||\nabla u_h -\bPi_K^0(\nabla u_h)  ||_{0,\Omega}\nonumber\\
    &\lesssim |u-u_h|^2_{2,\Omega}  + |u-\Pi_K^{\nabla^2}u_h|_{2,h}^2\\
    &\quad +||\bkappa \nabla u_h - \bPi_K^0(\bkappa \nabla u)   ||_{0,\Omega} ||\nabla u_h -\bPi_K^0(\nabla u_h)  ||_{0,\Omega}\nonumber,
\end{align*}
where, in the last step we have used the boundedness of $\bPi_K^0$ and  Poincar\'e inequality (cf. $u-u_h\in V$). Moreover, the ellipticity of $ a^h(\cdot,\cdot)$ implies that
\begin{equation*}
    |b^h(u_h,u_h)|=|\lambda_h^{-1} a^h(u_h,u_h)|\gtrsim |\lambda_h^{-1}| > 0,
\end{equation*}
and the result follows.
\end{proof}

An error estimate for the solutions of the source problems \eqref{defOfT} and \eqref{defTh_hat} is given below.
\begin{lemma}\label{Lem10APVEP}
    Given any $f\in R(E)$ set $w:=Tf$ and $w_h:=\widehat{T}_hf$, then,  the following estimate holds
\begin{align}
|w-w_h|_{2,\Omega}
&\lesssim h^s\Big\{|w-w_h|_{2,\Omega} + |w-\Pi^{\nabla^2}w_h|_{2,h} \Big\} \nonumber\\
&\quad + h^{1+s}  ||\bkappa \nabla w - \bPi_K^0(\bkappa \nabla w)  ||_{0,\Omega} v  + |w-\mathcal{P}_h^\nabla w|_{1,\Omega}, \nonumber
\end{align}
where $R(E)$ denotes the range of the operator $E$ (cf. \eqref{defOpE}).
\end{lemma}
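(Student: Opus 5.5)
We will bound $|w-w_h|_{2,\Omega}$ through a Strang-type splitting in the energy norm, using the ellipticity of $a(\cdot,\cdot)$ on $V$ and the stability \eqref{eq:stabA} of $a^h(\cdot,\cdot)$. Set $e:=w-w_h\in V$ and pick an interpolant $w_I\in V_h$ of $w$; by Proposition~\ref{prop:est_poly} and the regularity $R(E)\subseteq H^{2+s}(\Omega)$, we have $|w-w_I|_{2,\Omega}\lesssim h^s|w|_{2+s,\Omega}$, and this will be absorbed at the end. We start from
\begin{equation*}
|e|_{2,\Omega}^2 = a(w-w_h,e),
\end{equation*}
and write $e=(w-w_I)+(w_I-w_h)$ with $\delta_h:=w_I-w_h\in V_h$. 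Testing the continuous problem \eqref{defOfT} and the discrete one \eqref{defTh_hat} with $\delta_h$ gives
\begin{equation*}
a(w,\delta_h)-a^h(w_h,\delta_h)= b(f,\delta_h)-b^h(\mathcal{P}_h^{\nabla}f,\delta_h).
\end{equation*}
The left-hand side is split elementwise as $a_K(w-\Pi_K^{\nabla^2}w_h,\delta_h)+\{a_K(\Pi_K^{\nabla^2}w_h,\delta_h)-a_K^h(w_h,\delta_h)\}$. The consistency of the projection and \eqref{eq:stabA} bound it by $\bigl(|w-w_h|_{2,\Omega}+|w-\Pi^{\nabla^2}w_h|_{2,h}\bigr)|\delta_h|_{2,\Omega}$.

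For the right-hand side, we insert $b(\mathcal{P}_h^{\nabla}f,\delta_h)$. The term $b(f-\mathcal{P}_h^{\nabla}f,\delta_h)$ is controlled by $\|\bkappa\|_\infty|f-\mathcal{P}_h^\nabla f|_{1,\Omega}|\delta_h|_{1,\Omega}$; it vanishes under Remark~\ref{RemDefPos}. Since $f\in R(E)$ is an eigenfunction combination, $f$ and $w=Tf$ are proportional on each eigenspace, so this yields the term $|w-\mathcal{P}_h^\nabla w|_{1,\Omega}$. The term $b(\mathcal{P}_h^\nabla f,\delta_h)-b^h(\mathcal{P}_h^\nabla f,\delta_h)$ is handled by Lemma~\ref{LemAuxiliar}. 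It becomes $\sum_K\int_K(\bkappa\nabla v_h-\bPi_K^0(\bkappa\nabla v_h))\cdot(\nabla\delta_h-\bPi_K^0\nabla\delta_h)$. We add and subtract $\bkappa\nabla w$ inside the first factor and use $\|\nabla\delta_h-\bPi_K^0\nabla\delta_h\|_{0,K}\lesssim h_K|\delta_h|_{2,K}$. This produces the $h^{1+s}\|\bkappa\nabla w-\bPi_K^0(\bkappa\nabla w)\|_{0,\Omega}$ contribution, where the extra $h^s$ comes from the regularity of $R(E)$. The remainder is bounded through $\|\bkappa\|_\infty$ by lower-order $H^1$ differences.

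The step we expect to be the main obstacle is extracting the factor $h^s$ in front of $|w-w_h|_{2,\Omega}+|w-\Pi^{\nabla^2}w_h|_{2,h}$. A plain Strang argument only gives these terms with an $O(1)$ constant. To gain the factor, we would run an Aubin--Nitsche duality argument for the $H^1$-type error $|w-w_h|_{1,\Omega}$. We solve the adjoint problem $a(\varphi,z)=b(w-w_h,z)$ and use $\varphi\in H^{2+s}$ (Lemma~\ref{Lema8ApVEP}) and its VEM interpolant. This gives $|w-w_h|_{1,\Omega}\lesssim h^s\{|w-w_h|_{2,\Omega}+|w-\Pi^{\nabla^2}w_h|_{2,h}\}$ plus the same consistency terms. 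We then feed this lower-order bound back into the $b$-terms above, where $\delta_h$ and $e$ appear only through $H^1$ seminorms. After Young's inequality, the $|\delta_h|_{2,\Omega}$ factors are reabsorbed, and the triangle inequality with $w_I$ concludes the estimate.
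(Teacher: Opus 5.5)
There is a genuine gap, and it sits in your Strang step. In the energy splitting, the consistency term $a^h(w_h,\delta_h)-a(w_h,\delta_h)$ is bounded only by $C(|w-w_h|_{2,\Omega}+|w-\Pi^{\nabla^2}w_h|_{2,h})|\delta_h|_{2,\Omega}$, with an $O(1)$ constant. Since $|\delta_h|_{2,\Omega}\le|e|_{2,\Omega}+|w-w_I|_{2,\Omega}$, this term is as large as $|e|_{2,\Omega}^2$ itself. Young's inequality cannot reabsorb it. Feeding an $H^1$ duality bound into the $b$-terms does not help either, because this term contains no $H^1$ quantity.

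The other pieces of the Strang route fail in the same way. The term $a(e,w-w_I)$ and your closing triangle inequality leave $|w-w_I|_{2,\Omega}\lesssim h^s|w|_{2+s,\Omega}$, which is none of the right-hand-side terms. With $\delta_h$ as test function, Lemma~\ref{LemAuxiliar} only gives $\|\nabla\delta_h-\bPi_K^0\nabla\delta_h\|_{0,K}\lesssim h_K|\delta_h|_{2,K}$, i.e.\ a factor $h$, not $h^{1+s}$.

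In fact no argument can close the inequality with $|w-w_h|_{2,\Omega}$ on the left. For small $h$ you could absorb $h^s|w-w_h|_{2,\Omega}$, and then use
\begin{itemize}
\item $|w-\Pi^{\nabla^2}w_h|_{2,h}\le|w-w_h|_{2,\Omega}+|w-\Pi^{\nabla^2}w|_{2,h}\lesssim|w-w_h|_{2,\Omega}+h^s|w|_{2+s,\Omega}$,
\item $|w-\mathcal{P}_h^\nabla w|_{1,\Omega}\lesssim h^{1+s}|w|_{2+s,\Omega}$.
\end{itemize}
This would give $|w-w_h|_{2,\Omega}\lesssim h^{2s}$, i.e.\ second order in the $H^2$ seminorm for smooth $w$. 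That is beyond the $O(h)$ approximation order of the lowest-order $C^1$ space in that seminorm. The left-hand side is a misprint for $|w-w_h|_{1,\Omega}$: the paper's proof bounds $|w-w_h|_{1,\Omega}^2$ and divides by $|w-w_h|_{1,\Omega}$, and Lemma~\ref{Lem11ApVEP} uses exactly this $H^1$ form.

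Your duality step is the right idea; in the paper it is the entire proof. Drop the Strang part and set the duality argument up as follows.

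Take $v\in V$ with $a(v,z)=\int_\Omega\nabla(w-w_h)\cdot\nabla z$ for all $z\in V$, not $b(w-w_h,z)$. The tensor $\bkappa$ is only symmetric, bounded and divergence free (it is indefinite for the shear load of Example 3), so $b(e,e)$ does not control $|e|_{1,\Omega}^2$. Testing with $z=e$ and using both source problems with test function $v_I$ gives $|e|_{1,\Omega}^2=a(v-v_I,e)+\{a^h(w_h,v_I)-a(w_h,v_I)\}+\{b(f,v_I)-b^h(\mathcal{P}_h^\nabla f,v_I)\}$.

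Every power of $h$ now comes from the dual solution, with $|v|_{2+s,\Omega}\lesssim|e|_{1,\Omega}$ by elliptic regularity. Lemma~\ref{Lema8ApVEP} is an error bound, not a regularity result. The individual terms are handled as follows.
\begin{itemize}
\item First term: $|v-v_I|_{2,\Omega}\lesssim h^s|e|_{1,\Omega}$.
\item Consistency term: subtract $p_K\in\mathrm{P}_2(K)$ close to $v$, and $\Pi_K^{\nabla^2}w_h$, in the two arguments. It is then at most $\sum_K|w_h-\Pi_K^{\nabla^2}w_h|_{2,K}\,|v_I-p_K|_{2,K}\lesssim h^s(|e|_{2,\Omega}+|w-\Pi^{\nabla^2}w_h|_{2,h})|e|_{1,\Omega}$.
\item The $h^{1+s}$ factor: after inserting $\bkappa\nabla w$, it comes from $\|\nabla v_I-\bPi_K^0\nabla v_I\|_{0,K}\lesssim h_K^{1+s}|v|_{2+s,K}$ in Lemma~\ref{LemAuxiliar}, not from the regularity of $R(E)$.
\item Last term: $b(f-\mathcal{P}_h^\nabla f,v_I)$, with $f=\mu^{-1}w$ and $|v_I|_{1,\Omega}\lesssim|e|_{1,\Omega}$, gives $|w-\mathcal{P}_h^\nabla w|_{1,\Omega}$.
\end{itemize}
Dividing by $|e|_{1,\Omega}$ yields the estimate with $|w-w_h|_{1,\Omega}$ on the left.
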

\begin{proof}
Given $f\in R(E)$, set $w:=Tf$ and $w_h:=\widehat{T}_hf$ as the solutions of problems \eqref{defOfT} and  \eqref{defTh_hat} (respectively), let $v\in V$ the unique solution of the following weak  variational formulation: find $v\in V$, such that \begin{equation}\label{AuxProbDual}
    a(v,z)=\int_\Omega  \nabla(w-w_h)\cdot \nabla z\quad \forall z\in V. 
\end{equation} 
It is easy to check that $T(w-w_h)=v$ (cf. \eqref{defOfT}), and the Lax-Milgram Theorem implies $|v|_{2,\Omega}\lesssim ||\bkappa||_{\infty,\Omega}|w-w_h|_{1,\Omega}$. In addition, since $v\in H^{2+s}(\Omega)$, we have that there exists $v_I\in V_h$ such $|v-v_I|_{2,\Omega}\lesssim h^s |v|_{2+s,\Omega}$ (cf. \cite{G}). Then, testing \eqref{AuxProbDual} with $w-w_h\in V$, we get
\begin{align}
 |w-w_h|_{1,\Omega}^2&=   \int_\Omega  \nabla(w-w_h)\cdot \nabla (w-w_h)\nonumber\\
 &=a(v-v_I,w-w_h)+a(v_I,w-w_h)\nonumber\\
 &=  a(v-v_I,w-w_h)+a(w,v_I) - a(w_h,v_I) +  a^h(w_h,v_I)- a^h(w_h,v_I)\nonumber\\
 &=  a(v-v_I,w-w_h) + \{ a^h(w_h,v_I) -  a(w_h,v_I)\} +\{ b(f,v_I) -b^h(\mathcal{P}_h^\nabla f,v_I)\} .\nonumber 
\end{align}
From \cite[Lemma~10]{dassi2025posteriorierrorestimatesc1}, we have that
\begin{align}
 |w-w_h|_{1,\Omega}^2 &\leq h^s\Big\{|w-w_h|_{2,\Omega} + |w-\Pi^{\nabla^2}w_h|_{2,h}\Big\}|w-w_h|_{1,\Omega}  + \big\{ b(f,v_I) - b^h(\mathcal{P}_h^{\nabla} f,v_I)\big\} .\label{aux0_teo10}
\end{align}

Now, by adding and subtracting the term $b(\mathcal{P}_h^\nabla f,v_I)$,  applying  Lemma~\ref{LemAuxiliar} and $f=\mu^{-1}w$ in the last term of \eqref{aux0_teo10}, we arrive at
\begin{align}
b(f,v_I) - b^h(\mathcal{P}_h^{\nabla} f,v_I)  &=   b(f-\mathcal{P}_h^\nabla f,v_I) + \{ b(\mathcal{P}_h^{\nabla} f,v_I)-b^h(\mathcal{P}_h^{\nabla} f,v_I) \}\nonumber\\
&= b(f-\mathcal{P}_h^\nabla f,v_I) \nonumber\\
&\quad + \sum\limits_{K\in \Omega^h} \int_K [\bkappa \nabla(\mathcal{P}_hf)-\bPi_K^0(\bkappa \nabla(\mathcal{P}_hf))]\cdot [\nabla v_I - \bPi_K^0(\nabla v_I)]\nonumber\\
&= \mu^{-1}\Bigg\{ b(w-\mathcal{P}_h^\nabla w,v_I) \nonumber\\
&\quad + \sum\limits_{K\in \Omega^h} \int_K [\bkappa \nabla(\mathcal{P}_h^\nabla w)-\bPi_K^0(\bkappa \nabla(\mathcal{P}_h^\nabla w))]\cdot [\nabla v_I - \bPi_K^0(\nabla v_I)]\Bigg\}.\label{aux1_teo10}
\end{align}
Next, by adding and subtracting the terms $\nabla v$ and $\bPi_K^0 \nabla v$, applying the Cauchy-Scwharz inequality and the additional regularity of $v$ in  \eqref{aux1_teo10}, we obtain
\begin{align}
b(f,v_I) - b^h(\mathcal{P}_h^{\nabla} f,v_I)  \lesssim |b(w-\mathcal{P}_h^\nabla w,v_I)| + h^{1+s}||\bkappa \nabla(\mathcal{P}_hw)-\bPi_K^0(\bkappa \nabla(\mathcal{P}_hw)) ||_{0,\Omega}|w-w_h|_{1,\Omega}\nonumber.
\end{align}
Note that the addition and subtraction of the terms $\bkappa \nabla w$ and $\bPi_K^0 (\bkappa\nabla w)$ together with the triangle inequality, the assumption $\bkappa \in L^{\infty}(\Omega)
$ and the fact that $|w-\mathcal{P}_hw|_{1,\Omega}=\inf\limits_{z_h\in V_h}|w-z_h|_{1,\Omega}$ applied to the last term of the above expression, we obtain
\begin{align}
    b(f,v_I) - b^h(\mathcal{P}_h^{\nabla} f,v_I) 
    &\lesssim |b(w-\mathcal{P}_h^\nabla w,v_I)|+ h^{1+s}\Big\{ |\mathcal{P}_h^\nabla w-w|_{1,\Omega} \nonumber\\
    &\quad + ||\bkappa \nabla  w - \bPi_K^0(\bkappa \nabla w)  ||_{0,\Omega}+ |w-\mathcal{P}_h^\nabla w|_{1,\Omega} \Big\}|w-w_h|_{1,\Omega}\nonumber\\
        &\lesssim |b(w-\mathcal{P}_h^\nabla w,v_I)| \nonumber \\
        &\quad + h^{1+s}\Big\{ |w-w_h|_{1,\Omega} + ||\bkappa \nabla w - \bPi_K^0(\bkappa \nabla w)  ||_{0,\Omega}\Big\}|w-w_h|_{1,\Omega}\label{aux3cotateo10}.
\end{align}

Finally, by inserting the estimate \eqref{aux3cotateo10} in the right-hand side of \eqref{aux0_teo10} and multiplying by $|w-w_h|_{1,\Omega}^{-1}$ in \eqref{aux0_teo10} we have
\begin{align}
|w-w_h|_{2,\Omega}
&\lesssim h^s\Big\{|w-w_h|_{2,\Omega} + |w-\Pi^{\nabla^2}w_h|_{2,h}\Big\} \nonumber \\
& \quad + h^{1+s}\Big\{ |w-w_h|_{1,\Omega} + ||\bkappa \nabla w - \bPi_K^0(\bkappa \nabla w)  ||_{0,\Omega}\Big\} + |b(w-\mathcal{P}_h^\nabla w,v_I)| \nonumber\\
&\lesssim h^s\Big\{|w-w_h|_{2,\Omega} + |w-\Pi^{\nabla^2}w_h|_{2,h} \Big\}\nonumber\\
& \quad + h^{1+s}  ||\bkappa \nabla w - \bPi_K^0(\bkappa \nabla w)  ||_{0,\Omega} v + |b(w-\mathcal{P}_h^\nabla w,v_I)| \nonumber\\
&\lesssim h^s\Big\{|w-w_h|_{2,\Omega} + |w-\Pi^{\nabla^2}w_h|_{2,h} \Big\} \nonumber\\
&\quad + h^{1+s}  ||\bkappa \nabla w - \bPi_K^0(\bkappa \nabla w)  ||_{0,\Omega} v  + |w-\mathcal{P}_h^\nabla w|_{1,\Omega}. \nonumber
\end{align}
\end{proof}
As a consequence of the previous result, we have
\begin{corollary}
If the assumption in Remark~\ref{RemDefPos} is satisfied, then
\begin{align}
|w-w_h|_{2,\Omega}&\lesssim h^s\Big\{|w-w_h|_{2,\Omega} + |w-\Pi^{\nabla^2}w_h|_{2,h} \Big\}+ h^{1+s} ||\bkappa \nabla w - \bPi_K^0(\bkappa \nabla w)  ||_{0,\Omega},
\nonumber
\end{align}
for all $f\in R(E)\subset H^{2+s}(\Omega)$ such that $w:=Tf$ and $w_h:=\widehat{T}_hf$.
\end{corollary}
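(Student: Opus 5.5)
The corollary should follow from Lemma~\ref{Lem10APVEP}. The only extra task is to remove the last term there, which comes from $|b(w-\mathcal{P}_h^\nabla w,v_I)|$ (the term appearing just before the final step of that proof). I will keep $b(w-\mathcal{P}_h^\nabla w,v_I)$ in the argument instead of bounding it by $|w-\mathcal{P}_h^\nabla w|_{1,\Omega}$.

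Under the assumption of Remark~\ref{RemDefPos}, $\bkappa$ is positive definite. Hence $b(\cdot,\cdot)$ is symmetric, bounded and coercive on $H_0^1(\Omega)$, so it is an inner product equivalent to the $H^1_0$ one. The operator $\mathcal{P}_h^\nabla$ is then the $b$-orthogonal projection onto $V_h$:
\[
b(\mathcal{P}_h^\nabla w - w, z_h)=0 \quad \forall z_h\in V_h .
\]
With this choice of $\mathcal{P}_h^\nabla$ in the definition of $\widehat{T}_h$ (cf.~\eqref{defTh_hat}), I redo the proof of Lemma~\ref{Lem10APVEP} verbatim up to \eqref{aux1_teo10}. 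Since $v_I\in V_h$, the orthogonality gives $b(w-\mathcal{P}_h^\nabla w,v_I)=0$, and equivalently $b(f-\mathcal{P}_h^\nabla f,v_I)=0$. Thus the first summand in \eqref{aux1_teo10} disappears identically.

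The remaining summand is the consistency term from Lemma~\ref{LemAuxiliar}. It is bounded exactly as before: Cauchy--Schwarz, then $\|\nabla v_I-\bPi_K^0\nabla v_I\|_{0,K}\lesssim h^{1+s}|v|_{2+s}$ via the interpolant and the regularity of the dual problem, then $|v|_{2+s,\Omega}\lesssim|w-w_h|_{1,\Omega}$. After inserting $\bkappa\nabla w$ and $\bPi_K^0(\bkappa\nabla w)$, this yields
\[
h^{1+s}\big\{|w-\mathcal{P}_h^\nabla w|_{1,\Omega}+\|\bkappa\nabla w-\bPi_K^0(\bkappa\nabla w)\|_{0,\Omega}\big\}\,|w-w_h|_{1,\Omega}.
\]

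Dividing by $|w-w_h|_{1,\Omega}$ and combining with \eqref{aux0_teo10} gives the stated bound plus a leftover $h^{1+s}|w-\mathcal{P}_h^\nabla w|_{1,\Omega}$. This leftover is the main point to settle. I would absorb it in one of two ways. First, quasi-optimality of $\mathcal{P}_h^\nabla$ in the $b$-norm gives $|w-\mathcal{P}_h^\nabla w|_{1,\Omega}\lesssim|w-w_h|_{1,\Omega}\lesssim|w-w_h|_{2,\Omega}$ (Poincar\'e), and $h^{1+s}\le h^s$, so the term joins $h^s|w-w_h|_{2,\Omega}$. Alternatively, it is a higher-order term $O(h^{2+s})$ for $w\in H^{2+s}$. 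The delicate step is checking that the constants stay uniform when the energy-type projection is replaced by the $b$-projection; this needs positive definiteness of $\bkappa$, i.e.\ $b(z,z)\gtrsim|z|_{1,\Omega}^2$. Finally, the stray factor ``$v$'' in Lemma~\ref{Lem10APVEP} is a typo and drops out.
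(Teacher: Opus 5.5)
Your argument is the one the paper intends. Under Remark~\ref{RemDefPos} the redefined $\mathcal{P}_h^\nabla$ is the $b$-orthogonal projection onto $V_h$, so $b(w-\mathcal{P}_h^\nabla w,v_I)=0$, and this removes exactly the term that produces the unweighted $|w-\mathcal{P}_h^\nabla w|_{1,\Omega}$ in Lemma~\ref{Lem10APVEP}. Your use of quasi-optimality to dispose of the leftover $h^{1+s}|w-\mathcal{P}_h^\nabla w|_{1,\Omega}$ is the right step, since the exact best-approximation identity invoked in that proof no longer holds for the $b$-projection.

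One correction concerns the left-hand side. Dividing $|w-w_h|_{1,\Omega}^2\lesssim\cdots$ by $|w-w_h|_{1,\Omega}$ bounds $|w-w_h|_{1,\Omega}$, not $|w-w_h|_{2,\Omega}$ as displayed. This slip is inherited from Lemma~\ref{Lem10APVEP}; read literally, the displayed bound would give $|w-w_h|_{2,\Omega}=O(h^{2s})$ for small $h$, beyond the optimal energy rate. You should therefore claim the $H^1$-seminorm version, which is the form carried over to Lemma~\ref{Lem11ApVEP} and Theorem~\ref{ApTeo12VEP}.
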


We finalise by introducing an error estimate in the $H^1$ semi-norm for the spectral problem~\eqref{WDPr}.
\begin{lemma}\label{Lem11ApVEP}
   For all eigenpairs $(\mu_h^{(j)},u_h)$ of $\widehat{T}_h$ such that $||u_h||_{1,\Omega}=1$ $(j=1,2,...,m)$, there exists an eigenfunction $u\in H_0^1(\Omega)$ of $T$ associated with $\mu$ such that
\begin{align*}
|u-u_h|_{1,\Omega}\lesssim h^s\Big\{|w-w_h|_{2,\Omega} + |w-\Pi_K^{\nabla^2}w_h|_{2,h}\Big\}  + h^{1+s}||\bkappa \nabla w - \bPi_K^0(\bkappa \nabla w)  ||_{0,\Omega}   + |w-\mathcal{P}_h^{\nabla} w|_{1,\Omega}.
\end{align*}
\end{lemma}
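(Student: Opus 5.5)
The plan follows the standard Babuška--Osborn route: compare the eigenfunction $u_h$ of $\widehat{T}_h$ with the spectral projection of $u_h$ onto $R(E)$, and control that distance by the source-problem error $(\widetilde{T}-\widehat{T}_h)$ restricted to $R(E)$. The variables $w=Tf$, $w_h=\widehat{T}_hf$ in the statement are those of Lemma~\ref{Lem10APVEP}, with $f\in R(E)$ chosen in terms of $u$.

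\textbf{Step 1: reduction to the source problem.} Since $\widehat{T}_h u_h=\mu_h^{(j)}u_h$ and $\spec(\widehat{T}_h)=\spec(T_h)$, Lemma~\ref{Lema8ApVEP} gives convergence of $\widehat{T}_h$ to $\widetilde{T}$ in norm on $H_0^1(\Omega)$. Classical spectral approximation theory (Babuška--Osborn, together with \cite[Theorem~4.4]{MoraVelasquez2020}) then yields, for $h<h_0$,
\begin{equation*}
\widehat{\delta}\big(R(E_h),R(E)\big)\lesssim \big\|(\widetilde{T}-\widehat{T}_h)|_{R(E)}\big\|_{1,\Omega}.
\end{equation*}
Hence there is an eigenfunction $u\in R(E)$, obtained by suitably scaling $Eu_h$, with
\begin{equation*}
|u-u_h|_{1,\Omega}\lesssim \sup_{f\in R(E),\ \|f\|_{1,\Omega}=1}|(\widetilde{T}-\widehat{T}_h)f|_{1,\Omega}.
\end{equation*}
Because $R(E)$ is finite dimensional, the supremum is attained. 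It therefore suffices to bound $|w-w_h|_{1,\Omega}$ for $w=Tf$ and $w_h=\widehat{T}_hf$ with $f\in R(E)$ normalized; one may take $f=u$ itself, so that $w=\mu u$.

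\textbf{Step 2: bound on $|w-w_h|_{1,\Omega}$.} This is essentially the content of the proof of Lemma~\ref{Lem10APVEP}. Use the dual problem \eqref{AuxProbDual} and expand $|w-w_h|_{1,\Omega}^2$ as in that proof. Then apply \eqref{aux0_teo10} and \eqref{aux3cotateo10} and divide by $|w-w_h|_{1,\Omega}$. This yields exactly
\begin{equation*}
h^s\Big\{|w-w_h|_{2,\Omega}+|w-\Pi_K^{\nabla^2}w_h|_{2,h}\Big\}+h^{1+s}\|\bkappa\nabla w-\bPi_K^0(\bkappa\nabla w)\|_{0,\Omega}+|w-\mathcal{P}_h^{\nabla}w|_{1,\Omega}.
\end{equation*}
The term $|b(w-\mathcal{P}_h^\nabla w,v_I)|$ is handled by Cauchy--Schwarz, $\bkappa\in \bbL^\infty$, and the stability $|v_I|_{1,\Omega}\lesssim|v|_{2,\Omega}\lesssim|w-w_h|_{1,\Omega}$. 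The higher-order term $h^{1+s}|w-w_h|_{1,\Omega}$ is absorbed into the left-hand side for $h$ small.

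\textbf{Main obstacle.} The delicate point is Step 1: showing that the gap estimate is really controlled in the $H^1$ seminorm. This requires that the eigenvalues $\mu_h^{(j)}$ cluster near $\mu$ (the first bound of Theorem~\ref{th:convergenceRates}) and that the normalization $\|u_h\|_{1,\Omega}=1$ is compatible with choosing $u$ as a scaled projection $Eu_h$. I would first try the resolvent identity $(E-E_h)u_h$, with contour integrals over a circle isolating $\mu$, combined with uniform boundedness of $(z-\widehat{T}_h)^{-1}$ on that contour. Uniform boundedness follows from the norm convergence in Lemma~\ref{Lema8ApVEP}.
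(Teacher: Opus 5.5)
Your proposal follows the same route as the paper, which defers to \cite[Lemma~11]{dassi2025posteriorierrorestimatesc1}: first the Babu\v{s}ka--Osborn gap estimate $|u-u_h|_{1,\Omega}\lesssim\|(\widetilde{T}-\widehat{T}_h)|_{R(E)}\|$, which follows from the norm convergence in Lemma~\ref{Lema8ApVEP}, and then the $H^1$ source-problem bound obtained in the proof of Lemma~\ref{Lem10APVEP}. The one slip is ``one may take $f=u$ itself'': this holds only when $\mu$ is simple, so for multiplicity $m>1$ keep $f$ as the maximiser over the unit sphere of $R(E)$, which is enough because the statement does not tie $w=Tf$ to $u$.
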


\begin{proof}
The proof follows with the same arguments presented in \cite[Lemma~11]{dassi2025posteriorierrorestimatesc1}.
\end{proof}

\begin{theorem}\label{ApTeo12VEP}
If $u$ and $u_h$ solve the continuous and discrete spectral problems \eqref{WMPr} and \eqref{WDPr}, respectively. Then, there exists $s\in (1/2,1]$ such that 
\begin{align}
&|u-u_h|_{1,\Omega}\lesssim h^s\Big\{|u-u_h|_{2,\Omega} + |u-\Pi_K^{\nabla^2}u_h|_{2,h} \Big\}  + h^{1+s}||\bkappa \nabla u - \bPi_K^0(\bkappa \nabla u)  ||_{0,\Omega}  + |u-\mathcal{P}_h^{\nabla} u|_{1,\Omega}. \nonumber
\end{align}   
\end{theorem}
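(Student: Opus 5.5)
This is essentially Lemma~\ref{Lem11ApVEP} with $w,w_h$ replaced by the eigenfunctions $u,u_h$. The plan is to use the standard spectral-approximation chain: bound the eigenfunction error by the source-operator error on $R(E)$, and then invoke Lemma~\ref{Lem10APVEP}.

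\textbf{Step 1 (choice of eigenfunction and reduction to the source problem).} Normalise $u_h$ with $\|u_h\|_{1,\Omega}=1$, an eigenfunction of $\widehat{T}_h$ for $\mu_h^{(j)}$. Lemma~\ref{Lema8ApVEP} gives $\|(\widetilde T-\widehat T_h)|_{R(E)}\|\to 0$ in the $H^1$ norm. The abstract theory of Babu\v{s}ka--Osborn then provides an eigenfunction $u\in R(E)$ of $T$ associated with $\mu$ such that
\[
|u-u_h|_{1,\Omega}\lesssim \|(\widetilde T-\widehat T_h)|_{R(E)}\|_{\mathcal{L}(H^1)} .
\]
More concretely, take $u:=\mu^{-1}E u_h$, with $E$ from \eqref{defOpE}. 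Since the spectrum is real and isolated, the argument of \cite[Lemma~11]{dassi2025posteriorierrorestimatesc1} gives
\[
|u-u_h|_{1,\Omega}\lesssim |(T-\widehat T_h)u|_{1,\Omega}
\]
for $h<h_0$. Here the normalisation and the gap estimate between $R(E)$ and $R(E_h)$ absorb the terms involving $\mu-\mu_h$; those are of order $h^{2s}$ by Theorem~\ref{th:convergenceRates} and hence of higher order.

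\textbf{Step 2 (apply the source estimate).} Set $f:=u\in R(E)$, so that $w=Tu=\mu u$ and $w_h=\widehat T_h u$. Lemma~\ref{Lem10APVEP}, together with the Poincar\'e inequality, bounds $|w-w_h|_{1,\Omega}\lesssim|w-w_h|_{2,\Omega}$ by
\[
h^s\big\{|w-w_h|_{2,\Omega}+|w-\Pi_K^{\nabla^2}w_h|_{2,h}\big\}+h^{1+s}\|\bkappa\nabla w-\bPi_K^0(\bkappa\nabla w)\|_{0,\Omega}+|w-\mathcal{P}_h^\nabla w|_{1,\Omega}.
\]
Since $w=\mu u$ and $\mathcal{P}_h^\nabla$ and $\bPi_K^0$ are linear, the last two terms scale by $\mu$ into the corresponding terms in $u$.

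\textbf{Step 3 (replace $w_h$ by $u_h$ --- main obstacle).} The first bracket involves $w_h=\widehat T_h u$ rather than $\mu_h u_h$. I would write
\[
w-w_h=\mu(u-u_h)+(\mu-\mu_h)u_h+\widehat T_h(u_h-u),
\]
using $\widehat T_h u_h=\mu_h u_h$. The stability of $\widehat T_h$ from $H^1$ to $H^2$ gives $|\widehat T_h(u_h-u)|_{2,\Omega}\lesssim|u-u_h|_{1,\Omega}$, and the same splitting handles the broken seminorm via the continuity of $\Pi_K^{\nabla^2}$. After multiplying by $h^s$, the term $h^s|u-u_h|_{1,\Omega}$ is absorbed into the left-hand side for $h<h_0$. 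The term $|\mu-\mu_h|$ is bounded by Theorem~\ref{th:convergenceRates} by the squared quantities already present, so it can be absorbed into the $h^s$ bracket. Dividing by the constant $\mu$ then yields the stated bound.

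The delicate point is this absorption, which needs $h$ small enough, together with tracking that the $\mu-\mu_h$ contributions are genuinely of higher order.
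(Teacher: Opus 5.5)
Your proposal is correct and follows the paper's route. The paper's proof invokes Lemma~\ref{Lem11ApVEP}, which packages your Steps~1--2, and defers the passage from $(w,w_h)$ to $(u,u_h)$ to the argument of \cite[Theorem~12]{dassi2025posteriorierrorestimatesc1} together with the scaling of the $\bkappa$-oscillation term; that is what your Step~3 spells out.

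Two small corrections are needed:
\begin{itemize}
\item \textbf{Choice of eigenfunction.} The concrete eigenfunction should be $u:=Eu_h$ (equivalently $\mu^{-1}TEu_h$), not $\mu^{-1}Eu_h$. The latter is off by a factor $\mu^{-1}$, so $|u-u_h|_{1,\Omega}$ would not be small.
\item \textbf{The product term in Theorem~\ref{th:convergenceRates}.} When you absorb $h^s|\mu-\mu_h|$, the product term coming from Theorem~\ref{th:convergenceRates} is not already on your right-hand side. It is handled by $\|\nabla u_h-\bPi_K^0\nabla u_h\|_{0,\Omega}\lesssim h|u_h|_{2,\Omega}$ and $\|\bkappa\nabla u_h-\bPi_K^0(\bkappa\nabla u)\|_{0,\Omega}\le\|\bkappa\|_{\infty,\Omega}|u-u_h|_{1,\Omega}+\|\bkappa\nabla u-\bPi_K^0(\bkappa\nabla u)\|_{0,\Omega}$. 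These bound that contribution by $h^{1+s}|u-u_h|_{1,\Omega}$, which is absorbed into the left-hand side, plus $h^{1+s}\|\bkappa\nabla u-\bPi_K^0(\bkappa\nabla u)\|_{0,\Omega}$, which already appears in the statement.
\end{itemize}
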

\begin{proof}
The proof can be obtained from Lemma~\ref{Lem11ApVEP} with the same arguments as used in the proof of \cite[Theorem~12]{dassi2025posteriorierrorestimatesc1} and 
    \begin{align}
||\bkappa \nabla w - \bPi_K^0(\bkappa \nabla w)  ||_{0,\Omega}\lesssim |\mu^{-1}|\ 
||\bkappa \nabla u - \bPi_K^0(\bkappa \nabla u)  ||_{0,\Omega}.\nonumber
\end{align}
\end{proof}



\setcounter{equation}{0}
\section{Residual-based a posteriori error analysis} \label{sec:apost}
\label{SEC:EST_A_POST}
This section aims to define a residual-type estimator based on computable quantities of the VEM solution. In addition, we establish the reliability and efficiency of the proposed estimator.The upper bound follows from the residual equation combined with polynomial projections, stabilization terms, and interpolation estimates, while the lower bound is guaranteed through properties of bubble functions. 

\subsection{Error estimators} 
For each $K\in \Omega^h$ and $f\in \partial K$, the local estimators corresponding to the volume residual, the jump residual, the data oscillation, and the stabilisation term are defined by
\begin{subequations}\label{terms_estimator}
    \begin{align}
        \Xi_{K}^2 &:= h_K^4 \norm{\lambda_h \vdiv\left(\bPi_K^0(\bkappa \bPi_K^0(\nabla u_h))\right)}_{0,K}^2,\label{vol_est}\\
        \cJ_{f}^2 &:= h_f \norm{\jump{(\nabla^2\Pi_K^{\nabla^2} u_h )\bn_K^f}}_{0,f}^2 + h_f^{3}\norm{\jump{\lambda_h\bPi_K^0(\bkappa \bPi_K^0(\nabla u_h ))\cdot \bn_K^f}}_{0,f}^2,\label{jump_est}\\
        \Lambda_K^2 & := h_K^2\norm{\lambda_h \left(\bkappa \bPi_K^0(\nabla u_h)-\bPi_K^0(\bkappa \bPi_K^0(\nabla u_h))\right)}_{0,K}^2, \label{data_est}\\
        S_{K}^2 &:= S_K^{\nabla^2}(u_h-\Pi_K^{\nabla^2} u_h,u_h-\Pi_K^{\nabla^2} u_h) 
        .\label{stab_est}
    \end{align}
\end{subequations}
Notice that, the volume residual $\Xi^2 := \sum_{K\in \Omega^h} \Xi_K^2$ contains a polynomial approximation of the right-hand related to the buckling eigenvalue problem (cf. \eqref{eq:balance}); recovering the full residual requires the use of higher-order approximations. In addition, the jump residual $\cJ^2 := \sum_{f\in \cF^h_\Omega} \cJ_f^2$, and the stabilisation term $S^2 := \sum_{K\in \Omega^h} S_K^2$ are composed by computable polynomial projections of the discrete solution $u_h$. On the other hand, the data oscillation $\Lambda^2 := \sum_{K\in \Omega^h} \Lambda_K^2$ measures the polynomial approximation error of $\bkappa \nabla \Pi_K^{\nabla} u_h$. Finally, the respective global total error estimator is defined as follows:
\begin{align*}
\eta^2:= \sum_{K\in \Omega^h} \eta_K^2 = \sum_{K\in \Omega^h} \left[\Xi_K^2 + \Lambda_K^2 + S_K^2\right] + \sum_{f\in \cF^h_\Omega} \cJ_f^2 =  \Xi^2 + \cJ^2 + \Lambda^2 + S^2. 
\end{align*}

\subsection{Reliability}\label{Reliablity}
This section aims to prove an upper bound of the error in terms of the a posteriori error estimator. We start by rewriting the term $a(u-u_h,v)$ in terms of residual equations, from which the local error estimators emerge in a natural way during the subsequent analysis.

\begin{lemma}\label{Lem13ApVEP}
    If  $u$ and $u_h$ solve the spectral problems \eqref{WMPr} and \eqref{WDPr}, then for every $v\in  \rV$, the following identity holds:
    \begin{align*}
        a(u-u_h,v) &= \lambda b(u,v)-\lambda_h b( u_h,v) + \sum_{K\in \Omega^h} a_K(\Pi_K^{\nabla^2} u_h-u_h,v) \\
        &\quad + \lambda_h \sum_{K\in \Omega^h} \int_K (\bkappa(\nabla u_h - \bPi_K^0 (\nabla u_h)))\cdot \nabla v - \sum_{K\in \Omega^h} \int_{K}\lambda_h \vdiv\left( \bPi_K^0 (\bkappa \bPi_K^0(\nabla u_h))\right) v \\
        &\quad - \sum_{f\in \cF^h_\Omega} \int_f \jump{(\nabla^2\Pi_K^{\nabla^2} u_h )\bn_K^f}\cdot \nabla v + \sum_{f\in \mathcal{F}^h_\Omega} \int_f
        \jump{\lambda_h\bPi_K^0(\bkappa \bPi_K^0(\nabla u_h ))\cdot \bn_K^{f}}v \\
        &\quad + \sum_{K\in \Omega^h}\int_K \lambda_h \left(\bkappa \bPi_K^0(\nabla u_h)-\bPi_K^0(\bkappa \bPi_K^0(\nabla u_h))\right) \cdot \nabla v.
    \end{align*}
\end{lemma}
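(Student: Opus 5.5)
The identity is purely algebraic; no estimate is used. I would prove it by adding and subtracting $\lambda_h b(u_h,v)$, splitting $a(u_h,v)$ elementwise around the projection $\Pi_K^{\nabla^2}u_h$, and integrating by parts elementwise twice. Since $u\in\rV$ solves \eqref{WMPr} and $v\in\rV$, we have $a(u,v)=\lambda b(u,v)$. Hence
\begin{align*}
a(u-u_h,v)=\lambda b(u,v)-\lambda_h b(u_h,v)+\big\{\lambda_h b(u_h,v)-a(u_h,v)\big\}.
\end{align*}
The first two terms already appear in the statement, so everything reduces to rewriting the bracket.

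\textbf{Step 1: the $a$-part.} I would write $a(u_h,v)=\sum_K a_K(u_h-\Pi_K^{\nabla^2}u_h,v)+\sum_K a_K(\Pi_K^{\nabla^2}u_h,v)$. Then $-a(u_h,v)$ produces the term $\sum_K a_K(\Pi_K^{\nabla^2}u_h-u_h,v)$ minus $\sum_K a_K(\Pi_K^{\nabla^2}u_h,v)$. Because $\Pi_K^{\nabla^2}u_h\in\rP_2(K)$, its Hessian is a constant symmetric matrix, so $\bdiv(\nabla^2\Pi_K^{\nabla^2}u_h)=\bzero$. One integration by parts on each $K$ therefore gives
\begin{align*}
a_K(\Pi_K^{\nabla^2}u_h,v)=\int_{\partial K}(\nabla^2\Pi_K^{\nabla^2}u_h)\bn_K\cdot\nabla v.
\end{align*}
Summing over $K$ and using that $\nabla v$ is single valued across interior facets ($v\in \rV\subset H^2(\Omega)$) collects the contributions into $\sum_{f\in\cF^h_\Omega}\int_f\jump{(\nabla^2\Pi_K^{\nabla^2}u_h)\bn_K^f}\cdot\nabla v$. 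It enters with a minus sign, as in the statement.

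\textbf{Step 2: the $b$-part.} I would write $\lambda_h b(u_h,v)=\lambda_h\sum_K\int_K\bkappa\nabla u_h\cdot\nabla v$ and use the telescoping decomposition
\begin{align*}
\bkappa\nabla u_h=\bkappa(\nabla u_h-\bPi_K^0\nabla u_h)+\big(\bkappa\bPi_K^0\nabla u_h-\bPi_K^0(\bkappa\bPi_K^0\nabla u_h)\big)+\bPi_K^0(\bkappa\bPi_K^0\nabla u_h).
\end{align*}
The first two pieces give exactly the two volume terms of the statement that are tested against $\nabla v$. For the third piece, which is a polynomial vector field on $K$, I would integrate by parts:
\begin{align*}
\int_K \mathbf q\cdot\nabla v=-\int_K\vdiv(\mathbf q)\,v+\int_{\partial K}\mathbf q\cdot\bn_K\,v .
\end{align*}
This produces the volume term $-\sum_K\int_K\lambda_h\vdiv(\bPi_K^0(\bkappa\bPi_K^0\nabla u_h))v$. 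Summing the boundary integrals gives the normal jumps on interior facets. Boundary facets contribute nothing, since $v=0$ on $\Gamma$ for both choices of $\rV$.

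\textbf{Main obstacle.} The delicate point is the facet bookkeeping in Step 1. One must fix the orientation convention for $\jump{\cdot}$ consistently with $\bn_K^f$, so that the signs match the statement. One must also check that the boundary facets drop out of the Hessian term. For \textbf{CP} this is immediate, because $\nabla v=\bzero$ on $\Gamma$. For \textbf{SSP} only the tangential part of $\nabla v$ vanishes, since $v=0$ on $\Gamma$. This leaves the term $\int_f \bn^\top(\nabla^2\Pi_K^{\nabla^2}u_h)\bn\,\partial_{\bn}v$ on each $f\in\cF^h_\Gamma$. I would then have to either interpret the statement as valid for \textbf{CP}, or argue that this term is absorbed or neglected, for instance by adding a boundary-facet contribution to the jump estimator. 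I would flag this explicitly in the proof.
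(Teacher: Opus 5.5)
Your proof follows the paper's argument step for step: add and subtract $\lambda_h b(u_h,v)$, split $a(u_h,v)$ elementwise around $\Pi_K^{\nabla^2}u_h$ and integrate by parts using that its Hessian is constant, decompose $\bkappa\nabla u_h$ into the same three pieces, integrate the polynomial piece $\bPi_K^0(\bkappa\bPi_K^0\nabla u_h)$ by parts, and collect the facet contributions into jumps. Your \textbf{SSP} caveat is correct and is not resolved in the paper, whose proof only says it applies the boundary conditions; those conditions constrain $u$, not $\Pi_K^{\nabla^2}u_h$ or $\partial_{\bn}v$, so for $v\in H^2(\Omega)\cap H_0^1(\Omega)$ the term $-\sum_{f\in\cF^h_\Gamma}\int_f(\bn^\top\nabla^2\Pi_K^{\nabla^2}u_h\,\bn)\,\partial_{\bn}v$ survives and cannot be dropped from an exact identity, meaning the lemma as stated holds for \textbf{CP} while for \textbf{SSP} this term must be added (with a matching boundary-facet contribution $h_f\|\bn^\top\nabla^2\Pi_K^{\nabla^2}u_h\,\bn\|_{0,f}^2$ in the jump estimator).
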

\begin{proof}
Fix $v\in \rV$. Since $(\lambda,u)$ solves the weak buckling eigenvalue problem (cf. \eqref{WMPr}), basic algebraic manipulations lead to
\begin{align} \label{first_part_residual}
a(u-u_h,v) &= \lambda b(u,v)-\lambda_h b(u_h,v) + \lambda_h b(u_h,v)-a(u_h,v) \notag\\
&= \lambda b(u,v)-\lambda_h b(u_h,v)  + \sum_{K\in \Omega^h} \left[-a_K (u_h-\Pi_K^{\nabla^2} u_h,v) -  a_K(\Pi_K^{\nabla^2} u_h,v)\right] \notag\\
& \quad + \lambda_h\sum_{K\in \Omega^h} \int_K (\bkappa(\nabla u_h - \bPi_K^0 (\nabla u_h))\cdot \nabla v + \lambda_h\int_K \bPi_K^0(\bkappa \bPi_K^0 (\nabla u_h))\cdot \nabla v \notag\\
& \quad +\lambda_h \int_K \left[\bkappa \bPi_K^0 (\nabla u_h)-\bPi_K^0(\bkappa \bPi_K^0 (\nabla u_h))\right]\cdot \nabla v.
\end{align}
Next, integration by parts and using that $\Pi_K^{\nabla^2} u_h\in \rP_2(K)$ imply that 
\begin{align}
    \sum_{K\in\Omega^h} a_K(\Pi_K^{\nabla^2} u_h,v) &= \sum\limits_{K\in \Omega^h} \int_{\partial K} \left(\nabla^2(\Pi_K^{\nabla^2} u_h)\bn_{K}\right)\cdot \nabla v. \label{second_part_residual}
    \end{align}
    Whereas, integration by parts lead to 
    \begin{align}        
    \sum_{K\in \Omega^h} \int_K \bPi_K^0(\bkappa \bPi_K^0 (\nabla u_h))\cdot \nabla v &= - \sum_{K\in \Omega^h} \int_K \vdiv\left(\bPi_K^0(\bkappa \bPi_K^0(\nabla u_h))\right) v \notag \\
    & \quad + \sum_{K\in\Omega^h} \int_{\partial K} \left(\bPi_K^0(\bkappa \bPi_K^0(\nabla u_h)) \cdot \bn_K\right) v. \label{third_part_residual}
\end{align}
The proof finishes by applying the boundary conditions \eqref{eq:SSP}-\eqref{eq:CP} together with the discrete jump operator in \eqref{second_part_residual}- \eqref{third_part_residual}, and subsequently substituting the result into \eqref{first_part_residual}.
\end{proof}

In what follows, we will construct the reliability estimate for $\eta$.

\begin{theorem}\label{teo14ApVEP}
    If  $u$ and $u_h$ solve the spectral problems \eqref{WMPr} and \eqref{WDPr}, respectively, then the  following result holds true
\begin{align}
    |u-u_h|_{2,\Omega}\lesssim \eta +  \frac{\lambda+\lambda_h}{2}|u-u_h|_{1,\Omega} +||\bkappa \nabla u_h-\bPi_K^0(\bkappa \nabla u_h)||_{0,\Omega} +||\nabla u_h -\bPi_K^0(\nabla u_h) ||_{0,\Omega}.\nonumber
\end{align}
\end{theorem}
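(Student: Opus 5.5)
We follow the standard residual-based strategy. Set $e:=u-u_h\in\rV$ and let $e_I\in\rV_h$ be the VEM interpolant of $e$ (as in \cite{G}), which satisfies the local estimates $|e-e_I|_{m,K}\lesssim h_K^{2-m}|e|_{2,\omega_K}$ for $m=0,1,2$. The trace inequality then gives $\|e-e_I\|_{0,f}\lesssim h_f^{3/2}|e|_{2,\omega_f}$ and $\|\nabla(e-e_I)\|_{0,f}\lesssim h_f^{1/2}|e|_{2,\omega_f}$.

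By ellipticity, $|e|_{2,\Omega}^2\lesssim a(e,e)=a(e,e-e_I)+a(e,e_I)$. We treat the first term with the residual identity of Lemma~\ref{Lem13ApVEP} applied to $v=e-e_I$. For the second term, $v_h=e_I$ is discrete, so the discrete problem \eqref{WDPr} gives
\[
a(u_h,e_I)-\lambda_h b(u_h,e_I)=\big(a(u_h,e_I)-a^h(u_h,e_I)\big)-\lambda_h\big(b(u_h,e_I)-b^h(u_h,e_I)\big).
\]
We bound each difference separately.
\begin{itemize}
\item The consistency error for $a$ equals $\sum_K\big[a_K(u_h-\Pi_K^{\nabla^2}u_h,e_I-\Pi_K^{\nabla^2}e_I)-S_K^{\nabla^2}(\cdot,\cdot)\big]$. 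By \eqref{eq:stabA} it is bounded by $S\,|e_I|_{2,h}\lesssim S\,|e|_{2,\Omega}$.
\item The difference for $b$ is handled by Lemma~\ref{LemAuxiliar}. Using $\|\nabla e_I-\bPi_K^0\nabla e_I\|_{0,K}\lesssim h_K|e|_{2,K}$, it is bounded by $\|\bkappa\nabla u_h-\bPi_K^0(\bkappa\nabla u_h)\|_{0,\Omega}\,|e|_{2,\Omega}$. This is exactly the first extra term in the statement.
\end{itemize}
Combining both contributions, the whole right-hand side reduces to terms of the form $(\cdots)|e|_{2,\Omega}$ plus the eigenvalue mismatch $\lambda b(u,v)-\lambda_h b(u_h,v)$ with $v$ equal to $e$ (after recombining).

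Next we estimate the residual terms of Lemma~\ref{Lem13ApVEP} with $v=e-e_I$, using Cauchy--Schwarz together with the interpolation/trace bounds above.
\begin{itemize}
\item The volume term gives $\Xi$.
\item The two jump terms give $\cJ$, through the weights $h_f^{1/2}$ and $h_f^{3/2}$.
\item The last term, multiplied by $\lambda_h$, gives $\Lambda$ thanks to $\|\nabla(e-e_I)\|_{0,K}\lesssim h_K|e|_{2,K}$.
\item The term $\sum_K a_K(\Pi_K^{\nabla^2}u_h-u_h,v)$ is controlled by $|u_h-\Pi_K^{\nabla^2}u_h|_{2,h}\lesssim S$, using the lower bound in \eqref{eq:stabA}.
\item The term $\lambda_h\int_K\bkappa(\nabla u_h-\bPi_K^0\nabla u_h)\cdot\nabla v$ is bounded by $\|\bkappa\|_\infty\|\nabla u_h-\bPi_K^0\nabla u_h\|_{0,\Omega}|e|_{2,\Omega}$. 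This is the final extra term in the statement.
\end{itemize}
Finite overlap of the patches $\omega_K$ and $\omega_f$ (guaranteed by \ref{A1}--\ref{A3}) sums the local bounds into global ones.

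The remaining piece is the eigenvalue mismatch $\lambda b(u,e)-\lambda_h b(u_h,e)$. We write it through the symmetric splitting
\[
\lambda b(u,e)-\lambda_h b(u_h,e)=\tfrac{\lambda+\lambda_h}{2}\,b(e,e)+\tfrac{\lambda-\lambda_h}{2}\,b(u+u_h,e).
\]
The first part is bounded by $\|\bkappa\|_\infty\frac{\lambda+\lambda_h}{2}|e|_{1,\Omega}^2\lesssim \frac{\lambda+\lambda_h}{2}|e|_{1,\Omega}\,|e|_{2,\Omega}$, using Poincar\'e. The $|\lambda-\lambda_h|$ part is harder and is the main obstacle. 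With the normalisation $b(u,u)=b^h(u_h,u_h)=1$, the identity of Theorem~\ref{th:convergenceRates} expresses $\lambda-\lambda_h$ through $|e|_{2,\Omega}^2$, $S^2$ and the product of the two consistency norms. Each of these is either absorbed (it is of higher order, since $|e|_{2,\Omega}\to0$ for $h<h_0$ by Lemma~\ref{Lema8ApVEP}) or dominated by the terms already listed.

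Finally, we divide by $|e|_{2,\Omega}$ and absorb the higher-order contributions into the left-hand side. This yields the claimed bound with $\eta$, $\frac{\lambda+\lambda_h}{2}|u-u_h|_{1,\Omega}$, and the two projection-consistency norms.
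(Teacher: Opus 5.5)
Your proposal is correct and follows the paper's route. The paper's terms $E_1,\dots,E_9$ are exactly your pieces: the splitting $|e|_{2,\Omega}^2=a(e,e-e_I)+a(e,e_I)$, the residual identity of Lemma~\ref{Lem13ApVEP} tested with $e-e_I$, the consistency errors $a^h-a$ and $b-b^h$ (the latter via Lemma~\ref{LemAuxiliar}), and the symmetric splitting of $\lambda b(u,e)-\lambda_h b(u_h,e)$. The only difference is that you explicitly control the leftover $\tfrac{\lambda-\lambda_h}{2}\,b(u+u_h,e)$ through the eigenvalue identity in the proof of Theorem~\ref{th:convergenceRates}, absorbing $|e|_{2,\Omega}^2$ for small $h$, whereas the paper disposes of it by citing earlier work and $b(u,u)\lesssim 1$, so your version is the more careful of the two.
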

\begin{proof}
Let us define $e := u - u_h \in V$ and $e_I \in V_h$. Then, for any $s\in\{2,3\}$ and $t\in\{0,1,\ldots,s\}$, the following estimate holds
\begin{equation}\label{errorInterpolated}
|e - e_I|_{t,\Omega} \lesssim h^{\,s-t}\, |e|_{s,\Omega}.
\end{equation}
Testing the error equation in Lemma~\ref{Lem13ApVEP} by $v \equiv e - e_I \in V$ yields
\begin{align}
|e|_{2,\Omega}^2
& = a(e,e-e_I)+a(u,e_I)-a(u_h,e_I) +  a^h(u_h,e_I)- a^h(u_h,e_I)\nonumber\\
   & = \left[\lambda b(u,e) - \lambda_h b(u_h,e)\right] + \left[ \lambda_h \big\{b(u_h,e_I)-b^h(u_h,e_I) \big\} \right] + \left[ a^h(u_h,e_I)-a(u_h,e_I) \right] \nonumber\\
   &\quad + \sum_{K\in \Omega^h} a_K(\Pi_K^{\nabla^2} u_h-u_h,e-e_I) \nonumber\\
   &\quad + \lambda_h \sum_{K\in \Omega^h} \int_K (\bkappa(\nabla u_h - \bPi_K^0 (\nabla u_h)))\cdot \nabla (e-e_I) \nonumber\\
   &\quad - \sum_{K\in \Omega^h} \int_{K}\lambda_h \vdiv\left( \bPi_K^0 (\bkappa \bPi_K^0(\nabla u_h))\right) (e-e_I) \nonumber\\ 
   &\quad - \sum_{f\in \cF^h_\Omega} \int_f \jump{(\nabla^2\Pi_K^{\nabla^2} u_h )\bn_K^f}\cdot \nabla (e-e_I)  \nonumber\\
   &\quad + \sum_{f\in \mathcal{F}^h_\Omega} \int_f \jump{\lambda_h\bPi_K^0(\bkappa \bPi_K^0(\nabla u_h ))\cdot \bn_K^{f}}(e-e_I)\nonumber \\
   &\quad + \sum_{K\in \Omega^h}\int_K \lambda_h \left(\bkappa \bPi_K^0(\nabla u_h)-\bPi_K^0(\bkappa \bPi_K^0(\nabla u_h))\right) \cdot \nabla (e-e_I)\nonumber\\
   & =:\sum\limits_{j=1}^9 E_j, \label{auxboundrelab}
\end{align}
where the error terms are defined by
\begin{align*}
      E_1&:= \lambda b(u,e) - \lambda_h b(u_h,e),\\  
      E_2&:=  \lambda_h \big\{b(u_h,e_I)-b^h(u_h,e_I) \big\},\\
      E_3&:=    a^h(u_h,e_I)-a(u_h,e_I), \nonumber \\
      E_4&:=\sum_{K\in \Omega^h} a_K(\Pi_K^{\nabla^2} u_h-u_h,e-e_I),\\ E_5&:= \lambda_h \sum_{K\in \Omega^h} \int_K (\bkappa(\nabla u_h - \bPi_K^0 (\nabla u_h)))\cdot \nabla (e-e_I), \nonumber\\
      E_6&:= - \sum_{K\in \Omega^h} \int_{K}\lambda_h \vdiv\left( \bPi_K^0 (\bkappa \bPi_K^0(\nabla u_h))\right) (e-e_I),\\
      E_7&:=- \sum_{f\in \cF^h_\Omega} \int_f \jump{(\nabla^2\Pi_K^{\nabla^2} u_h)\bn_K^f}\cdot \nabla (e-e_I),\nonumber\\
      E_8&:= \sum_{f\in \mathcal{F}^h_\Omega} \int_f
        \jump{\lambda_h\bPi_K^0(\bkappa \bPi_K^0(\nabla u_h ))\cdot \bn_K^{f}}(e-e_I),\nonumber\\
      E_9&:=\sum_{K\in \Omega^h}\int_K \lambda_h \left(\bkappa \bPi_K^0(\nabla u_h)-\bPi_K^0(\bkappa \bPi_K^0(\nabla u_h))\right) \cdot \nabla (e-e_I).\nonumber
\end{align*}
Notice that the terms $E_3$, $E_4$, and $E_7$ were estimated in \cite[Theorem~14]{dassi2025posteriorierrorestimatesc1}. 

We now proceed to bound the remaining terms $E_j$, let $(\lambda,u)\neq (0,\mathbf{0})$ be an eigenpair of $T$, with a simple eigenvalue $\lambda$. For each mesh $\Omega^h$, let $(\lambda_h,u_h)$ be the corresponding discrete solution of the spectral problem, such that $|u_h|_{1,\Omega}=1$ and satisfying
\[
\lim_{h\to 0} |\lambda-\lambda_h| = 0
\quad\text{and}\quad
\lim_{h\to 0} |u-u_h|_{2,\Omega} = 0.
\]
Following the arguments presented in \cite{dassi2025posteriorierrorestimatesc1} and using the fact that $b(u,u)\lesssim 1$ in $E_1$, we obtain
\begin{align}
    E_1\lesssim \frac{\lambda+\lambda_h}{2}b(e,e)\lesssim \frac{\lambda+\lambda_h}{2} ||\bPi_K^0 \nabla (u-u_h) ||_{0,\Omega}^2\lesssim  \frac{\lambda+\lambda_h}{2} |u-u_h|_{1,\Omega}|u-u_h|_{2,\Omega}\label{cotaE1}.
\end{align}
Next, for the term $E_2$, we apply the Lemma~\ref{LemAuxiliar} to deduce
\begin{align}
    E_2 & = \lambda_h \sum\limits_{K\in \Omega^h} \int_K  \Big(\bkappa \nabla u_h-\bPi_K^0(\bkappa \nabla u_h) \Big)\cdot\Big( \nabla e_I - \bPi_K^0 \nabla e_I \Big)    \nonumber\\
    &\lesssim \sum\limits_{K\in \Omega^h} ||\bkappa \nabla u_h-\bPi_K^0(\bkappa \nabla u_h)||_{0,K} ||  \nabla e_I - \bPi_K^0 \nabla e_I||_{0,K}\nonumber\\
    &\lesssim   ||\bkappa \nabla u_h-\bPi_K^0(\bkappa \nabla u_h)||_{0,\Omega} |e-e_I|_{1,\Omega} \nonumber \\
    &\lesssim  ||\bkappa \nabla u_h-\bPi_K^0(\bkappa \nabla u_h)||_{0,\Omega} |e-e_I|_{2,\Omega} \nonumber\\
    &\lesssim 
    ||\bkappa \nabla u_h-\bPi_K^0(\bkappa \nabla u_h)||_{0,\Omega} |u-u_h|_{2,\Omega}.
    \label{cotaE2}
\end{align}

In turn, for the terms $E_5, E_6$ y $E_9$ we apply the Cauchy-Schwarz inequality, estimate~\eqref{errorInterpolated} and definitions \eqref{vol_est}, \eqref{jump_est} and  \eqref{stab_est}  to obtain 
\begin{align}
   E_5+E_6+E_9
&\lesssim \sum\limits_{K\in \Omega^h}\Big\{ ||\nabla u_h -\bPi_K^0(\nabla u_h) ||_{0,K} + h^2\norm{\lambda_h \vdiv\left(\bPi_K^0(\bkappa \bPi_K^0(\nabla u_h))\right)}_{0,K}\nonumber\\
   &\quad + h_K \norm{\lambda_h \left(\bkappa \bPi_K^0(\nabla u_h)-\bPi_K^0(\bkappa \bPi_K^0(\nabla u_h))\right)}_{0,K} \Big\}|e|_{2,K}\nonumber\\
   &\lesssim \Big\{ ||\nabla u_h -\bPi_K^0(\nabla u_h) ||_{0,\Omega}  + \Xi + \Lambda  \Big\}|u-u_h|_{2,\Omega}.\label{cotasE5E6E9}
\end{align}

Lastly, for term $E_8$ we apply the standard trace inequality in the Sobolev space $H^1$ (see for instance \cite{G}) and Cauchy-Schwarz inequality to obtain 
\begin{align}
  E_8\lesssim  \cJ |u-u_h|_{2,\Omega}. \label{cotaE8}
\end{align}

The result follows by inserting the estimates \eqref{cotaE1}, \eqref{cotaE2}, \eqref{cotasE5E6E9}, \eqref{cotaE8}, the inequalities~\cite[(39), (40) and (43)]{dassi2025posteriorierrorestimatesc1} into \eqref{auxboundrelab}, and multiplying by $|u-u_h|_{2,\Omega}^{-1}$, i.e.,
\begin{align*}
    |u-u_h|_{2,\Omega}\lesssim \eta +  \frac{\lambda+\lambda_h}{2}|u-u_h|_{1,\Omega} +||\bkappa \nabla u_h-\bPi_K^0(\bkappa \nabla u_h)||_{0,\Omega} +||\nabla u_h -\bPi_K^0(\nabla u_h) ||_{0,\Omega}.  
\end{align*}
\end{proof}

\begin{corollary}\label{cor15ApVEP}
    If  $u$ and $u_h$ solve the spectral problems \eqref{WMPr} and \eqref{WDPr}, respectively. Then, the inequality holds
     \begin{align}
        |u-\Pi_K^{\nabla^2}u_h|_{2,h}&\lesssim \eta + \frac{\lambda + \lambda_h}{2}|u-u_h|_{1,\Omega} +||\bkappa \nabla u_h - \bPi_K^0(\bkappa \nabla u)_{0,\Omega} + ||\nabla u_h - \bPi_K^0(\nabla u_h)||_{0,\Omega}.\nonumber
    \end{align}   
\end{corollary}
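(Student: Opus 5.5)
The bound follows from Theorem~\ref{teo14ApVEP} by a triangle inequality on each element, once the discrepancy $u_h-\Pi_K^{\nabla^2}u_h$ is controlled by the stabilisation estimator. For each $K\in\Omega^h$ we write
\begin{align*}
|u-\Pi_K^{\nabla^2}u_h|_{2,K}\le |u-u_h|_{2,K} + |u_h-\Pi_K^{\nabla^2}u_h|_{2,K}.
\end{align*}
We then square, sum over $K\in\Omega^h$, and take square roots:
\begin{align*}
|u-\Pi_K^{\nabla^2}u_h|_{2,h}\lesssim |u-u_h|_{2,\Omega} + \Big(\sum_{K\in\Omega^h}|u_h-\Pi_K^{\nabla^2}u_h|_{2,K}^2\Big)^{1/2}.
\end{align*}
The first term is bounded directly by Theorem~\ref{teo14ApVEP}. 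This produces $\eta$, the term $\frac{\lambda+\lambda_h}{2}|u-u_h|_{1,\Omega}$, the two projection terms $\|\bkappa\nabla u_h-\bPi_K^0(\bkappa\nabla u_h)\|_{0,\Omega}$ and $\|\nabla u_h-\bPi_K^0(\nabla u_h)\|_{0,\Omega}$, which are exactly the quantities on the right-hand side of the corollary.

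For the second term, note that $u_h-\Pi_K^{\nabla^2}u_h\in\ker(\Pi_K^{\nabla^2})$, since $\Pi_K^{\nabla^2}$ is a projection onto $\rP_2(K)$. The lower stability bound in \eqref{eq:stabA} then gives
\begin{align*}
|u_h-\Pi_K^{\nabla^2}u_h|_{2,K}^2 = a_K(u_h-\Pi_K^{\nabla^2}u_h,u_h-\Pi_K^{\nabla^2}u_h)\le \alpha_*^{-1} S_K^{\nabla^2}(u_h-\Pi_K^{\nabla^2}u_h,u_h-\Pi_K^{\nabla^2}u_h) = \alpha_*^{-1} S_K^2 .
\end{align*}
Summing over $K$ yields $\big(\sum_K|u_h-\Pi_K^{\nabla^2}u_h|_{2,K}^2\big)^{1/2}\lesssim S\le\eta$. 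This term is therefore absorbed into $\eta$, with constants depending only on $\alpha_*$.

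The only delicate point is notational. The statement writes the data term as $\|\bkappa\nabla u_h-\bPi_K^0(\bkappa\nabla u)\|_{0,\Omega}$, whereas Theorem~\ref{teo14ApVEP} produces $\bPi_K^0(\bkappa\nabla u_h)$. I read this as a typo and would prove the corollary with $u_h$ inside the projection.

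If the version with $u$ is genuinely intended, the plan is to add and subtract $\bPi_K^0(\bkappa\nabla u)$ and use the $L^2$-stability of $\bPi_K^0$ together with $\bkappa\in[\bbL^\infty(\Omega)]^{d\times d}$. This gives
\begin{align*}
\|\bPi_K^0(\bkappa\nabla(u-u_h))\|_{0,\Omega}\lesssim |u-u_h|_{1,\Omega},
\end{align*}
a contribution already dominated by the $|u-u_h|_{1,\Omega}$ term, up to a constant depending on $\lambda$. Apart from this bookkeeping, the argument is a two-line consequence of Theorem~\ref{teo14ApVEP} and \eqref{eq:stabA}.
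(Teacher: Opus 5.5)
Your proposal is correct and follows the paper's own proof: a triangle inequality, Theorem~\ref{teo14ApVEP} for $|u-u_h|_{2,\Omega}$, and control of $|u_h-\Pi_K^{\nabla^2}u_h|_{2,h}$ by $S\le\eta$, where you make explicit the lower bound in \eqref{eq:stabA} that the paper uses only implicitly. Your reading of the data term as $\bPi_K^0(\bkappa\nabla u_h)$ also agrees with the paper, whose proof ends with exactly that term.
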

\begin{proof}
The proof follows by applying the triangle inequality, Theorem~\ref{teo14ApVEP}  and \eqref{stab_est}. Indded,  
\begin{align}
    |u-\Pi_K^{\nabla^2}u_h|_{2,h}&\leq |u-u_h|_{2,\Omega} + |u_h-\Pi_K^{\nabla^2}u_h|_{2,h}\nonumber\\
    &\lesssim  \eta +  \frac{\lambda+\lambda_h}{2}|u-u_h|_{1,\Omega} +||\bkappa \nabla u_h-\bPi_K^0(\bkappa \nabla u_h)||_{0,\Omega} \nonumber\\
    & \quad +||\nabla u_h -\bPi_K^0(\nabla u_h) ||_{0,\Omega} + \Big\{\sum\limits_{K\in \Omega^h} S_K^2\Big\}^{1/2}\nonumber\\
    &\lesssim  \eta +  \frac{\lambda+\lambda_h}{2}|u-u_h|_{1,\Omega} +||\bkappa \nabla u_h-\bPi_K^0(\bkappa \nabla u_h)||_{0,\Omega} +||\nabla u_h -\bPi_K^0(\nabla u_h) ||_{0,\Omega}.\nonumber 
\end{align}
\end{proof}

\begin{corollary}\label{cor16ApVEP}
      If  $u$ and $u_h$ solve the spectral problems \eqref{WMPr} and \eqref{WDPr}, respectively. Then, the following estimate holds   
         \begin{align}
        |\lambda-\lambda_h|&\lesssim \Big\{\eta + \frac{\lambda + \lambda_h}{2}|u-u_h|_{1,\Omega} +||\bkappa \nabla u_h - \bPi_K^0(\bkappa \nabla u)_{0,\Omega} + ||\nabla u_h - \bPi_K^0(\nabla u_h)||_{0,\Omega}\Big\}^2\nonumber\\
        &\quad +||\bkappa \nabla u_h-\bPi_K^0(\bkappa \nabla u_h)||_{0,\Omega} ||\nabla u_h -\bPi_K^0(\nabla u_h) ||_{0,\Omega}.\nonumber
    \end{align} 
\end{corollary}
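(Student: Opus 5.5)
The plan is to combine the second estimate of Theorem~\ref{th:convergenceRates} with Theorem~\ref{teo14ApVEP} and Corollary~\ref{cor15ApVEP}. Theorem~\ref{th:convergenceRates} already gives
\begin{align*}
|\lambda-\lambda_h|\lesssim |u-u_h|_{2,\Omega}^2+|u-\Pi_K^{\nabla^2}u_h|_{2,h}^2+\|\bkappa\nabla u_h-\bPi_K^0(\bkappa\nabla u)\|_{0,\Omega}\,\|\nabla u_h-\bPi_K^0(\nabla u_h)\|_{0,\Omega}.
\end{align*}
It comes from the identity
$(\lambda_h-\lambda)b^h(u_h,u_h)=a(e,e)-\lambda b(e,e)+\{a^h(u_h,u_h)-a(u_h,u_h)\}+\lambda\{b(u_h,u_h)-b^h(u_h,u_h)\}$, together with Lemma~\ref{LemAuxiliar} for the last bracket and the lower bound $b^h(u_h,u_h)\gtrsim|\lambda_h|^{-1}$. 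So the task reduces to bounding each of the two squared energy-type errors by the a posteriori quantity appearing in the statement.

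First, I will bound $|u-u_h|_{2,\Omega}$ with Theorem~\ref{teo14ApVEP} and $|u-\Pi_K^{\nabla^2}u_h|_{2,h}$ with Corollary~\ref{cor15ApVEP}. Denote by $\mathcal{R}$ the bracket $\eta+\frac{\lambda+\lambda_h}{2}|u-u_h|_{1,\Omega}+\|\bkappa\nabla u_h-\bPi_K^0(\bkappa\nabla u_h)\|_{0,\Omega}+\|\nabla u_h-\bPi_K^0(\nabla u_h)\|_{0,\Omega}$. Both errors are then $\lesssim\mathcal{R}$, so both squares are $\lesssim\mathcal{R}^2$. Inserting these into the eigenvalue bound gives the first line of the claim. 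The mixed product term is carried over unchanged and gives the second line.

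The one delicate point is that Theorem~\ref{th:convergenceRates} contains $\bPi_K^0(\bkappa\nabla u)$ in the product term, whereas the estimator bracket and the statement's last term use $\bPi_K^0(\bkappa\nabla u_h)$. I expect this mismatch to be the main (though mild) obstacle. To handle it, I would write $\bkappa\nabla u_h-\bPi_K^0(\bkappa\nabla u)=[\bkappa\nabla u_h-\bPi_K^0(\bkappa\nabla u_h)]+\bPi_K^0(\bkappa\nabla(u_h-u))$. By $L^2$-stability of $\bPi_K^0$ and $\bkappa\in\bbL^\infty$, the second piece is bounded by $\|\bkappa\|_{\infty}|u-u_h|_{1,\Omega}$. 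Multiplied by $\|\nabla u_h-\bPi_K^0\nabla u_h\|_{0,\Omega}$, Young's inequality turns this contribution into $|u-u_h|_{1,\Omega}^2+\|\nabla u_h-\bPi_K^0\nabla u_h\|_{0,\Omega}^2$, which is absorbed into $\mathcal{R}^2$. The factor $(\lambda+\lambda_h)/2$ is bounded below for $h$ small since $\lambda_h\to\lambda\neq0$, so $|u-u_h|_{1,\Omega}\lesssim\frac{\lambda+\lambda_h}{2}|u-u_h|_{1,\Omega}$. Alternatively, one can simply keep the product in the form stated, since the corollary's own notation mixes both versions.

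Finally, I would note that all hidden constants depend only on $\rho$, $\alpha_*,\alpha^*$, $\|\bkappa\|_{\infty,\Omega}$ and $\lambda$, and that the estimate holds for $h<h_0$ with $h_0$ from Theorem~\ref{th:convergenceRates}. This yields the stated bound.
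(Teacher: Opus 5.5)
Your proposal is correct and follows the paper's route: you square the bounds from Theorem~\ref{teo14ApVEP} and Corollary~\ref{cor15ApVEP}, insert them into the second estimate of Theorem~\ref{th:convergenceRates}, and carry the product term along. Your explicit handling of the $\bPi_K^0(\bkappa\nabla u)$ versus $\bPi_K^0(\bkappa\nabla u_h)$ mismatch (linearity and $L^2$-stability of $\bPi_K^0$, then Young's inequality) fills a step the paper glosses over. In the same vein, the best-approximation property of $\bPi_K^0$ gives $\|\bkappa\nabla u_h-\bPi_K^0(\bkappa\nabla u_h)\|_{0,K}\le\|\bkappa\nabla u_h-\bPi_K^0(\bkappa\nabla u)\|_{0,K}$, so your bracket $\mathcal{R}$ is dominated by the one in the statement.
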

\begin{proof}
    The result follows by adding the term $|u-u_h|_{2,\Omega}^2+||\bkappa \nabla u_h-\bPi_K^0(\bkappa \nabla u_h)||_{0,\Omega} ||\nabla u_h -\bPi_K^0(\nabla u_h) ||_{0,\Omega}$ to the square of the estimate in Corollary~\ref{cor15ApVEP}, and subsequently applying Theorem~\ref{th:convergenceRates} together with Corollary~\ref{cor15ApVEP}.
\end{proof}

\begin{theorem}\label{th:reliability}
     If  $u$ and $u_h$ solve the spectral problems \eqref{WMPr} and \eqref{WDPr}, respectively, then there exists $h_0>0$ such that for all $h<h_0$ the  following estimates hold true
     \begin{align}\label{cota_conf_for_u}
         |u-u_h|_{2,\Omega}+|u-\Pi_K^{\nabla^2}u_h|_{2,h} +||\bkappa \nabla u_h-\bPi_K^0(\bkappa \nabla u_h)||_{0,\Omega} ||\nabla u_h -\bPi_K^0(\nabla u_h) ||_{0,\Omega} &\lesssim \eta, \\
            \label{cota_conf_for_lambda}
        |\lambda - \lambda_h|&\lesssim \eta^2.    
     \end{align}
\end{theorem}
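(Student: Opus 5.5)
The plan is to combine Theorem~\ref{teo14ApVEP} with Corollaries~\ref{cor15ApVEP} and \ref{cor16ApVEP}. Each of these bounds the error by $\eta$ plus three extra quantities:
\begin{align*}
\frac{\lambda+\lambda_h}{2}|u-u_h|_{1,\Omega}, \qquad \|\bkappa \nabla u_h-\bPi_K^0(\bkappa \nabla u_h)\|_{0,\Omega}, \qquad \|\nabla u_h -\bPi_K^0(\nabla u_h)\|_{0,\Omega}.
\end{align*}
So the whole proof reduces to showing that, for $h<h_0$, each of these three terms is bounded by $\eta$ plus a multiple of $h^{\sigma}\big(|u-u_h|_{2,\Omega}+|u-\Pi_K^{\nabla^2}u_h|_{2,h}\big)$ with some $\sigma>0$. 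Once that holds, the higher-order contribution is absorbed into the left-hand side by choosing $h_0$ small. Since $\lambda_h\to\lambda$, the factor $(\lambda+\lambda_h)/2$ is uniformly bounded for $h<h_0$, so only $|u-u_h|_{1,\Omega}$ matters in the first term.

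\textbf{Step 1: the $H^1$ error.} Theorem~\ref{ApTeo12VEP} gives
\begin{align*}
|u-u_h|_{1,\Omega}\lesssim h^s\big\{|u-u_h|_{2,\Omega}+|u-\Pi_K^{\nabla^2}u_h|_{2,h}\big\}+h^{1+s}\|\bkappa\nabla u-\bPi_K^0(\bkappa\nabla u)\|_{0,\Omega}+|u-\mathcal{P}_h^\nabla u|_{1,\Omega}.
\end{align*}
The first group is of the absorbable type, with $s>1/2$. The term $|u-\mathcal{P}_h^\nabla u|_{1,\Omega}$ is the best approximation in $|\cdot|_{1}$, so it is bounded by $|u-u_I|_{1,\Omega}$ for an interpolant. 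I would then try to bound $|u-u_I|_{1,\Omega}$ by $h|u-u_h|_{2,\Omega}$ through a duality (Aubin--Nitsche) argument with the $H^{2+s}$-regular dual problem \eqref{AuxProbDual}. I expect this step to be the main obstacle, because the a priori term is not obviously controlled by the discrete error. What I would try first: choose the discrete competitor $u_h$ itself, i.e.\ use $|u-\mathcal{P}_h^\nabla u|_{1,\Omega}\le |u-u_h|_{1,\Omega}$ inside the duality identity of Lemma~\ref{Lem10APVEP}. The consistency terms produce the factor $h^s$ directly.

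\textbf{Step 2: the projection terms.} The term $h^{1+s}\|\bkappa\nabla u-\bPi_K^0(\bkappa\nabla u)\|_{0,\Omega}$ is handled by inserting $\bPi_K^0\nabla u_h$ and using $\bkappa\in \bbL^\infty$ and the boundedness of $\bPi_K^0$. This bounds it by $h^{1+s}|u-u_h|_{1,\Omega}$ plus $h^{s}\Lambda$, where $\Lambda$ is part of $\eta$. For $\|\nabla u_h-\bPi_K^0\nabla u_h\|_{0,K}$, the projection onto $\bP_1$ reproduces linear gradients. Since $\nabla\Pi_K^{\nabla^2}u_h\in\bP_1(K)$, Proposition~\ref{prop:est_poly} gives
\begin{align*}
\|\nabla u_h-\bPi_K^0\nabla u_h\|_{0,K}\le\|\nabla(u_h-\Pi_K^{\nabla^2}u_h)\|_{0,K}\lesssim h_K|u_h-\Pi_K^{\nabla^2}u_h|_{2,K}\lesssim h_K S_K,
\end{align*}
where the last step uses the stability \eqref{eq:stabA}. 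The $\bkappa$-term is treated the same way: split $\bkappa\nabla u_h-\bPi_K^0(\bkappa\nabla u_h)$ by inserting $\bkappa\bPi_K^0\nabla u_h$. Its second piece is $\lambda_h^{-1}h_K^{-1}\Lambda_K$, up to another $h_KS_K$ contribution. For this piece I would either accept the weighted form or use that the first-line product term in \eqref{cota_conf_for_u} only needs the product to be $\lesssim \eta$. With one factor $\lesssim hS$, the product is at most $h^{0}\Lambda\cdot S\lesssim\eta^2\lesssim \eta$ for bounded $\eta$.

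\textbf{Step 3: conclusion.} Inserting Steps 1--2 into Theorem~\ref{teo14ApVEP} and Corollary~\ref{cor15ApVEP}, then absorbing the $h^s$ terms for $h<h_0$, gives \eqref{cota_conf_for_u}. For \eqref{cota_conf_for_lambda}, I would start from Corollary~\ref{cor16ApVEP}, whose braced quantity is now $\lesssim\eta$. The remaining product term is bounded by $(h^{-1}\Lambda+S)\,hS\lesssim\eta^2$, which gives $|\lambda-\lambda_h|\lesssim\eta^2$.
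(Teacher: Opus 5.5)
Your skeleton is the paper's own: Theorem~\ref{teo14ApVEP} and its corollaries, Theorem~\ref{ApTeo12VEP} for $|u-u_h|_{1,\Omega}$, and absorption for $h<h_0$. The paper's proof goes straight from Theorem~\ref{ApTeo12VEP} to the existence of $h_0$. It never treats $|u-\mathcal{P}_h^{\nabla}u|_{1,\Omega}$ or the projection terms explicitly. You try to make these steps explicit, and your argument breaks in two places.

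\textbf{First gap: Step~1 is circular.} In the duality identity of Lemma~\ref{Lem10APVEP}, the term $b(w-\mathcal{P}_h^{\nabla}w,v_I)$ is not a consistency term. It is only bounded by $\|\bkappa\|_{\infty,\Omega}|w-\mathcal{P}_h^{\nabla}w|_{1,\Omega}|v_I|_{1,\Omega}$, with no power of $h$. It vanishes only for $\bkappa=\mathbb{I}$, or for the $b$-orthogonal projection of Remark~\ref{RemDefPos}, which needs a positive definite $\bkappa$; the shear load of Example~3 is indefinite. So inserting $|u-\mathcal{P}_h^{\nabla}u|_{1,\Omega}\le|u-u_h|_{1,\Omega}$ only gives $|u-u_h|_{1,\Omega}\le\dots+C|u-u_h|_{1,\Omega}$ with $C=O(1)$, which is empty. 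Duality for an interpolant $u_I$ fails as well, because $u-u_I$ has no Galerkin orthogonality and cannot be related to $u-u_h$. The missing idea is to compare with the $a$-orthogonal (Ritz) projection $R_hu\in\rV_h$, defined by $a(u-R_hu,v_h)=0$ for all $v_h\in\rV_h$:
\begin{itemize}
\item duality with a problem of type \eqref{AuxProbDual}, plus $H^{2+s}$ regularity, gives $|u-R_hu|_{1,\Omega}\lesssim h^s|u-R_hu|_{2,\Omega}$;
\item since $a(v,v)=|v|_{2,\Omega}^2$, best approximation gives $|u-R_hu|_{2,\Omega}\le|u-u_h|_{2,\Omega}$;
\item hence $|u-\mathcal{P}_h^{\nabla}u|_{1,\Omega}\le|u-R_hu|_{1,\Omega}\lesssim h^s|u-u_h|_{2,\Omega}$, which is absorbable.
\end{itemize}

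\textbf{Second gap: the additive oscillation term.} The term $\|\bkappa\nabla u_h-\bPi_K^0(\bkappa\nabla u_h)\|_{0,\Omega}$ on the right of Theorem~\ref{teo14ApVEP}, and inside both corollaries, is not controlled by $\eta$. Your splitting correctly gives the elementwise bound $h_KS_K+|\lambda_h|^{-1}h_K^{-1}\Lambda_K$. However, $(\sum_K h_K^{-2}\Lambda_K^2)^{1/2}$ is an unweighted oscillation and is not part of $\eta$. Your product remark concerns only the left-hand side of \eqref{cota_conf_for_u}, not this additive occurrence, and ``accepting the weighted form'' proves a different estimate.

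This cannot be repaired with Theorem~\ref{teo14ApVEP} as a black box. You must reopen its term $E_2$, where the additive term originates, and keep $\|\nabla e_I-\bPi_K^0\nabla e_I\|_{0,K}\lesssim h_K|e_I|_{2,K}$. This gives $E_2\lesssim(\sum_K h_K^2\|\bkappa\nabla u_h-\bPi_K^0(\bkappa\nabla u_h)\|_{0,K}^2)^{1/2}|e|_{2,\Omega}\lesssim(h^2S+\Lambda)|e|_{2,\Omega}$.

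Your bound $\|\nabla u_h-\bPi_K^0\nabla u_h\|_{0,K}\lesssim h_KS_K$ is correct and disposes of the other additive term. The left-hand product is then most simply bounded by $\|\bkappa\|_{\infty,\Omega}|u_h|_{1,\Omega}\,hS\lesssim\eta$.

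\textbf{Quasi-uniformity.} Your pairings $h^{1+s}h_K^{-1}\Lambda_K\lesssim h^s\Lambda_K$ (Step~2) and $(h^{-1}\Lambda+S)\,hS\lesssim\eta^2$ (Step~3) match the global $h$, or a product of global norms, against local $h_K^{-1}$. They therefore hold only on quasi-uniform meshes. For the eigenvalue bound, work elementwise from the sum in Lemma~\ref{LemAuxiliar}, before Cauchy--Schwarz, rather than from the product of global norms in Corollary~\ref{cor16ApVEP}.
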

\begin{proof}
    From Theorem~\ref{teo14ApVEP}, the definition of $S_K$ (cf. \eqref{stab_est}) and Theorem~\ref{ApTeo12VEP} we have
    \begin{align}
       &|u-u_h|_{2,\Omega}+|u-\Pi_K^{\nabla^2}u_h|_{2,h} +||\bkappa \nabla u_h-\bPi_K^0(\bkappa \nabla u_h)||_{0,\Omega} ||\nabla u_h -\bPi_K^0(\nabla u_h) ||_{0,\Omega}\nonumber\\
       &\lesssim  \eta + \frac{\lambda + \lambda_h}{2}|u-u_h|_{1,\Omega} +||\bkappa \nabla u_h - \bPi_K^0(\bkappa \nabla u)||_{0,\Omega} + ||\nabla u_h - \bPi_K^0(\nabla u_h)||_{0,\Omega}\nonumber\\
       &\quad +||\bkappa \nabla u_h-\bPi_K^0(\bkappa \nabla u_h)||_{0,\Omega} ||\nabla u_h -\bPi_K^0(\nabla u_h) ||_{0,\Omega}\nonumber\\
       &\lesssim \eta + \frac{\lambda + \lambda_h}{2}\Bigg\{ h^s\Big\{|u-u_h|_{2,\Omega} + |u-\Pi_K^{\nabla^2}u_h|_{2,h} \Big\}  + h^{1+s}||\bkappa \nabla u - \bPi_K^0(\bkappa \nabla u)  ||_{0,\Omega}  + |u-\mathcal{P}_h^{\nabla} u|_{1,\Omega}\Bigg\}\nonumber\\
       &\quad +||\bkappa \nabla u_h - \bPi_K^0(\bkappa \nabla u)||_{0,\Omega} + ||\nabla u_h - \bPi_K^0(\nabla u_h)||_{0,\Omega} \nonumber\\
       &\quad +||\bkappa \nabla u_h-\bPi_K^0(\bkappa \nabla u_h)||_{0,\Omega} ||\nabla u_h -\bPi_K^0(\nabla u_h) ||_{0,\Omega},\nonumber
    \end{align}
where, in the last line of the above inequality, we have proceeded  as in \cite[Theorem~4.4]{MoraVelasquez2020}. Thus, there exists $h_0>0$ such that for all $h<h_0$, the estimate \eqref{cota_conf_for_u} holds true. 

Moreover, by applying Theorem~\ref{ApTeo12VEP}, the estimate   \eqref{cota_conf_for_u} and Corollary~\ref{cor16ApVEP}, there exists $h_0>0$ such that for all $0<h<h_0$ the estimate \eqref{cota_conf_for_lambda} holds.
\end{proof}

\subsection{Efficiency}\label{Efficiency}
This subsection aims to prove the efficiency (lower bound for the error) of the global volume and jump estimators $\Xi$ and $\mathcal{J}$ up to the data oscillation $\Lambda$, the stabilisation estimator $S$ and higher-order terms. We begin by recalling properties of element $\psi_k$ and facet $\psi_f$ bubble functions in the space $H^2_0(K)$ (see \cite[Section~3.7]{verfurth1996review} for construction details in two-dimensions and \cite[Remark 6]{dassi2025posteriorierrorestimatesc1} for a discussion regarding the three-dimensional case).
\begin{subequations}
\begin{align}
||q_K||_{0,K}^2&\lesssim \int_K \psi_K q_K^2, &\quad \forall q_K\in \mathbb{P}_2(K),\label{bubb1}\\
||\psi_Kq_K||_{0,K} &\leq ||q_K||_{0,K}, &\quad \forall q_K\in \mathbb{P}_2(K),\label{bubb2}\\
h_f^{-1}||q_f||_{0,f}^2&\lesssim \int_f q_f \mathbf{n}^f_{K} \cdot \nabla(\psi_f q_f), &\quad \forall q_f\in \mathbb{P}_0(f),\label{bubb1Edge}\\
||\mathbf{n}^f_{K} \cdot \nabla(\psi_f q_f)||_{0,f} &\lesssim h^{-1}_f||q_f||_{0,f}, &\quad \forall q_f\in \mathbb{P}_0(f),\label{bubb2Edge}\\
\sum_{K=K^+,K^-} h_f^{-2}||\psi_f q_f||_{0,K}&\lesssim \sum_{K=K^+,K^-} |\psi_f q_f|_{2,K}, &\quad \forall q_f\in \mathbb{P}_0(f),\label{bubb3Edge} \\
\sum_{K=K^+,K^-} |\psi_f q_f|_{2,K}&\lesssim \sum_{K=K^+,K^-} h_f^{-2}||\psi_f q_f||_{0,K}, &\quad \forall q_f\in \mathbb{P}_0(f),\label{bubb3_5Edge} \\
\sum_{K=K^+,K^-} ||\psi_f q_f||_{0,K}&\lesssim h_f^{1/2}||q_f||_{0,f}, &\quad\forall q_f\in \mathbb{P}_0(f).\label{bubb4Edge}
\end{align}
\end{subequations}
Moreover, the following  local inverse estimates were  established in \cite{zhao2023interior} 
\begin{align}
 | v|_{1,K}\lesssim h_K^{-1}||v||_{0,K} 
\quad
\mbox{and}
\quad 
 | v|_{2,K}\lesssim h_K^{-2}||v||_{0,K},
 &\quad \forall v\in  V_h^K .\label{bubb3}
\end{align}

The following result establishes an estimate for the volume estimator $\Xi$.
\begin{lemma}\label{ApVEPLem18}
    If  $u$ and $u_h$ solve the spectral problems \eqref{WMPr} and \eqref{WDPr}, respectively,  then 
\begin{align}    
\Xi_K&\lesssim  |u-u_h|_{2,K} + S_K + h_K|\lambda u-\lambda_h u_h|_{1,K} + h_K\Lambda_K + h_K ||\nabla u_h - \bPi_K^0 (\nabla u_h)||_{0,K}.\nonumber
\end{align}
\end{lemma}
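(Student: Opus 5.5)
The plan is the standard element-bubble argument applied to the residual identity of Lemma~\ref{Lem13ApVEP}. Fix $K\in\Omega^h$ and set
\[
q_K:=\lambda_h\vdiv\big(\bPi_K^0(\bkappa\bPi_K^0(\nabla u_h))\big).
\]
Since $\bPi_K^0(\cdot)\in\bP_1(K)$, this is a constant on $K$, so in particular $q_K\in\rP_2(K)$. Take the test function $v:=\psi_Kq_K\in H^2_0(K)$, extended by zero to $\Omega$; then $v\in\rV$. By \eqref{bubb1},
\[
\|q_K\|_{0,K}^2\lesssim\int_K\psi_Kq_K^2=\int_Kq_Kv .
\]
Because $v$ and $\nabla v$ vanish on $\partial K$, all facet jump terms in Lemma~\ref{Lem13ApVEP} drop out. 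The identity then expresses $\int_Kq_Kv$ as a sum of five local terms:
\begin{align*}
\int_K q_K v &= -a_K(u-u_h,v) + \big[\lambda b_K(u,v)-\lambda_h b_K(u_h,v)\big] + a_K(\Pi_K^{\nabla^2}u_h-u_h,v)\\
&\quad + \lambda_h\int_K\bkappa(\nabla u_h-\bPi_K^0\nabla u_h)\cdot\nabla v\\
&\quad + \int_K\lambda_h\big(\bkappa\bPi_K^0\nabla u_h-\bPi_K^0(\bkappa\bPi_K^0\nabla u_h)\big)\cdot\nabla v .
\end{align*}

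I would bound these five terms one at a time, using Cauchy--Schwarz, the inverse estimates \eqref{bubb3} (equivalently, the scaling of the bubble) in the form $|v|_{1,K}\lesssim h_K^{-1}\|q_K\|_{0,K}$ and $|v|_{2,K}\lesssim h_K^{-2}\|q_K\|_{0,K}$, together with \eqref{bubb2}.
\begin{itemize}
\item The first term gives $|u-u_h|_{2,K}\,h_K^{-2}\|q_K\|_{0,K}$.
\item The third term is controlled through \eqref{eq:stabA}, which gives $|u_h-\Pi_K^{\nabla^2}u_h|_{2,K}\lesssim S_K$; it therefore contributes $S_K\,h_K^{-2}\|q_K\|_{0,K}$.
\item The second term is $\int_K\bkappa\nabla(\lambda u-\lambda_hu_h)\cdot\nabla v$. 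With $\bkappa\in\bbL^\infty$ it gives $|\lambda u-\lambda_hu_h|_{1,K}\,h_K^{-1}\|q_K\|_{0,K}$.
\item The fourth term gives $\|\nabla u_h-\bPi_K^0\nabla u_h\|_{0,K}\,h_K^{-1}\|q_K\|_{0,K}$.
\end{itemize}
Dividing by $\|q_K\|_{0,K}$ and multiplying by $h_K^2$ (recall $\Xi_K=h_K^2\|q_K\|_{0,K}$) yields, for these four terms, exactly $|u-u_h|_{2,K}$, $S_K$, $h_K|\lambda u-\lambda_hu_h|_{1,K}$ and $h_K\|\nabla u_h-\bPi_K^0\nabla u_h\|_{0,K}$, as stated.

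The main obstacle is the last term, whose integrand is the data-oscillation residual. We have $\|\lambda_h(\bkappa\bPi_K^0\nabla u_h-\bPi_K^0(\ldots))\|_{0,K}=h_K^{-1}\Lambda_K$. A naive Cauchy--Schwarz with $|v|_{1,K}\lesssim h_K^{-1}\|q_K\|_{0,K}$ therefore gives only $\Lambda_K$ after rescaling, and the claimed $h_K\Lambda_K$ needs two extra powers of $h_K$. What I would try first is to exploit the $\bL^2$-orthogonality of $\bPi_K^0$ onto $\bP_1(K)$. Writing $w:=\lambda_h\bkappa\bPi_K^0\nabla u_h$, we have $\int_K(w-\bPi_K^0w)\cdot\nabla v=\int_K(w-\bPi_K^0w)\cdot(\nabla v-\bPi_K^0\nabla v)$. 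Proposition~\ref{prop:est_poly} (with $k=1$, $s=2$) then gives $\|\nabla v-\bPi_K^0\nabla v\|_{0,K}\lesssim h_K^2|\nabla v|_{2,K}=h_K^2|v|_{3,K}$.

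Whether this produces a net gain depends on how $|v|_{3,K}$ is controlled. If it is controlled by an inverse estimate for the fixed bubble $\psi_K$ times the constant $q_K$, the scaling $|v|_{3,K}\lesssim h_K^{-3}\|q_K\|_{0,K}$ recovers only the same order. So I expect this step to require either that the bubble is built so that $\nabla v$ is nearly linear on $K$ (making $\nabla v-\bPi_K^0\nabla v$ small in a mesh-independent way), or else that one absorbs the discrepancy by reading $\Lambda_K$ with the $h_K$-weight convention in which the oscillation enters at the same scale as the $h_K|\cdot|_{1,K}$ terms. I would attempt the orthogonality route first. If the powers still do not close, this term is precisely where the stated $h_K\Lambda_K$ (rather than $\Lambda_K$) has to be justified, and that would need to be checked against the definition \eqref{data_est}.

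Finally, I would sum the five contributions and rescale as above to conclude the inequality in Lemma~\ref{ApVEPLem18}.
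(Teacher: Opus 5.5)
Your treatment of the first four terms is exactly the paper's argument: the same test function $v=\psi_Kq_K$, the same localized form of Lemma~\ref{Lem13ApVEP}, the same bubble/inverse estimates, and \eqref{eq:stabA} for the stabilisation term. That part is correct.

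The gap is the oscillation term: your proposal does not deliver $h_K\Lambda_K$, and neither of your suggested repairs can.
\begin{itemize}
\item Orthogonality gains nothing. Since $q_K$ is constant, $\nabla v-\bPi_K^0\nabla v=q_K(\nabla\psi_K-\bPi_K^0\nabla\psi_K)$. Moreover $\nabla\psi_K\notin\bP_1(K)$, since otherwise $\psi_K\in\rP_2(K)\cap H^2_0(K)=\{0\}$. By scaling and shape regularity, $\|\nabla v-\bPi_K^0\nabla v\|_{0,K}\simeq h_K^{-1}\|q_K\|_{0,K}$, so the orthogonality route reproduces the naive bound exactly.
\item There is no ``nearly linear'' bubble. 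A compactness argument on a reference element gives the same lower bound for every $v\in H^2_0(K)$ with $\int_Kq_Kv\gtrsim\|q_K\|_{0,K}^2$ and $|v|_{2,K}\lesssim h_K^{-2}\|q_K\|_{0,K}$. So no test function compatible with your other four bounds can do better.
\item Re-weighting $\Lambda_K$ is not available, because \eqref{data_est} fixes $\Lambda_K=h_K\|\lambda_h(\bkappa\bPi_K^0\nabla u_h-\bPi_K^0(\bkappa\bPi_K^0\nabla u_h))\|_{0,K}$.
\end{itemize}
Also, the shortfall is one power of $h_K$, not two: the naive estimate gives $\Lambda_K$, and the claim is $h_K\Lambda_K$.

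You should know, however, that the paper's proof contains no extra idea at this point. It bounds the fifth term by the same Cauchy--Schwarz step you call naive and obtains $h_K^{-1}\|\lambda_h(\bkappa\bPi_K^0\nabla u_h-\bPi_K^0(\bkappa\bPi_K^0\nabla u_h))\|_{0,K}$. It then replaces this by $h_K^{-1}\Lambda_K$, whereas by \eqref{data_est} it equals $h_K^{-2}\Lambda_K$. In effect it silently uses the unweighted convention of your second fallback, contrary to its own definition.

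Done consistently, the argument (yours and the paper's) proves the lemma with $\Lambda_K$ in place of $h_K\Lambda_K$. A scaling check agrees: $\Xi_K$, $|u-u_h|_{2,K}$, $S_K$ and $\Lambda_K$ scale identically under dilation, while $h_K\Lambda_K$ carries an extra length factor. So your diagnosis is right. Rather than searching for the extra power, state and prove the estimate with $\Lambda_K$. Note that $\Lambda$ then enters Theorem~\ref{th:efficiency} as a data-oscillation term rather than multiplied by $h$.
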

\begin{proof}
Let $K\in\Omega^h$. Consider $\psi_K$ as the interior bubble function defined in $K$ and  $v_K:=\psi_K\lambda_h\vdiv\left( \bPi_K^0 (\bkappa \bPi_K^0(\nabla u_h))\right)$. Since $v_K\in H_0^2(K)$, from now on we will denote its extension by $v$ by zero in $\Omega$. Thus, from the residual error equation in Lemma~\eqref{Lem13ApVEP} we readily see that
    \begin{align*} 
       \int_{K}\lambda_h \vdiv\left( \bPi_K^0 (\bkappa \bPi_K^0(\nabla u_h))\right) v  &= \lambda b_K(u,v)-\lambda_h b_K( u_h,v) + a_K(\Pi_K^{\nabla^2} u_h-u_h,v) -a_K(u-u_h,v)\\
        &\quad+ \lambda_h  \int_K (\bkappa(\nabla u_h - \bPi_K^0 (\nabla u_h)))\cdot \nabla v  \nonumber\\
        &\quad   + \int_K \lambda_h \left(\bkappa \bPi_K^0(\nabla u_h)-\bPi_K^0(\bkappa \bPi_K^0(\nabla u_h))\right) \cdot \nabla v
    \end{align*}
Next, we apply \eqref{bubb1} in the previous identity to obtain 
    \begin{align}
    ||\lambda_h \vdiv\left( \bPi_K^0 (\bkappa \bPi_K^0(\nabla u_h))\right) ||_{0,K}^2  &\lesssim \int_K \psi_K [\lambda_h \vdiv\left( \bPi_K^0 (\bkappa \bPi_K^0(\nabla u_h))\right)]^2 \nonumber \\ 
        &   = b(\lambda u- \lambda_h u_h,v) +  a_K(\Pi_K^{\nabla^2} u_h-u_h,v)-a(u-u_h,v) \nonumber\\
        &\quad + \lambda_h  \int_K (\bkappa(\nabla u_h - \bPi_K^0 (\nabla u_h)))\cdot \nabla v   \nonumber\\
        &\quad + \int_K \lambda_h \left(\bkappa \bPi_K^0(\nabla u_h)-\bPi_K^0(\bkappa \bPi_K^0(\nabla u_h))\right) \cdot \nabla v\nonumber\\
       & \lesssim |\lambda u-\lambda_h u_h|_{1,K}|v|_{1,K} + |\Pi_K^{\nabla^2} u_h-u_h|_{2,K} |v|_{2,K} \nonumber\\ 
       &\quad + |u-u_h|_{2,K}|v|_{2,K} + ||\nabla u_h - \bPi_K^0 (\nabla u_h)||_{0,K}|v|_{1,K}\nonumber\\
       &\quad + ||\lambda_h \left(\bkappa \bPi_K^0(\nabla u_h)-\bPi_K^0(\bkappa \bPi_K^0(\nabla u_h))\right) ||_{0,K}|v|_{1,K}.\label{cota_VOl_Eff}
    \end{align}
Since $v\in H_0^2(K)$, $0\leq \psi_K\leq 1$ and  $\lambda_h \vdiv\left( \bPi_K^0 (\bkappa \bPi_K^0(\nabla u_h))\right) \in \mathbb{P}_0(K)\in \rV_h$, the estimates \eqref{bubb2} and \eqref{bubb3} lead to 
\begin{subequations}
    \begin{align}
        |v|_{1,K} &\lesssim h_K^{-1}||\lambda_h \vdiv\left( \bPi_K^0 (\bkappa \bPi_K^0(\nabla u_h))\right) ||_{0,K}, \label{aux2-3_effa}\\ 
        |v|_{2,K} &\lesssim  h_K^{-2} ||\lambda_h \vdiv\left( \bPi_K^0 (\bkappa \bPi_K^0(\nabla u_h))\right) ||_{0,K}.\label{aux2-3_effb}
    \end{align}
\end{subequations}
Thus, by replacing the estimates \eqref{aux2-3_effa} and \eqref{aux2-3_effb} on the right-hand side of \eqref{cota_VOl_Eff}, and applying the definitions of $\Lambda_K$ and $S_K$ we have
\begin{align}    
||\lambda_h \vdiv\left( \bPi_K^0 (\bkappa \bPi_K^0(\nabla u_h))\right) ||_{0,K}
&\lesssim h_K^{-1}|\lambda u-\lambda_h u_h|_{1,K} + h_K^{-2}|\Pi_K^{\nabla^2} u_h-u_h|_{2,K} \nonumber\\
&\quad + h_K^{-2}|u-u_h|_{2,K} + h_K^{-1} ||\nabla u_h - \bPi_K^0 (\nabla u_h)||_{0,K} \nonumber\\
&\quad + h_K^{-1}||\lambda_h \left(\bkappa \bPi_K^0(\nabla u_h)-\bPi_K^0(\bkappa \bPi_K^0(\nabla u_h))\right) ||_{0,K} \nonumber\\
&\lesssim h_K^{-1}|\lambda u-\lambda_h u_h|_{1,K} + h_K^{-2} S_K + h_K^{-1} ||\nabla u_h - \bPi_K^0 (\nabla u_h)||_{0,K} \nonumber\\
&\quad + h_K^{-2}|u-u_h|_{2,K}  + h_K^{-1}\Lambda_K. \nonumber
\end{align}
Therefore, by using the definition of the local volume estimator $\Xi_K$ and multiplying by $h_K^{2}$ in the above inequality, we obtain the result.
%
\end{proof}

Now, the following result can be obtained following the arguments as those applied in \cite[Lemma~19]{dassi2025posteriorierrorestimatesc1}.
\begin{lemma}\label{ApVEPLem19}
    If  $u$ and $u_h$ solve the spectral problems \eqref{WMPr} and \eqref{WDPr}, respectively,  then
	\begin{align*}
		S_K\lesssim |u-u_h|_{2,K}+|u-\Pi_K^{\nabla^2}  u_h|_{2,K} + |u-u_h|_{1,K} + |u-\Pi_K^{\nabla}u_h|_{1,K}.
	\end{align*}
\end{lemma}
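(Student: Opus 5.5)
The bound on $S_K$ is purely local and contains no eigenvalue or $\bkappa$-dependent quantity. So the plan is to treat it exactly as for the source problem in \cite[Lemma~19]{dassi2025posteriorierrorestimatesc1}, using only the stability property \eqref{eq:stabA} of $S_K^{\nabla^2}$, the triangle inequality, and the approximation properties of the projections $\Pi_K^{\nabla^2}$ and $\Pi_K^{\nabla}$.

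First, set $w_h:=u_h-\Pi_K^{\nabla^2}u_h\in\ker(\Pi_K^{\nabla^2})$, since $\Pi_K^{\nabla^2}$ is idempotent on $\rP_2(K)$. The upper bound in \eqref{eq:stabA} then gives
\[
S_K^2=S_K^{\nabla^2}(w_h,w_h)\le \alpha^* a_K(w_h,w_h)=\alpha^*|u_h-\Pi_K^{\nabla^2}u_h|_{2,K}^2 .
\]
Next, add and subtract $u$ and apply the triangle inequality:
\[
|u_h-\Pi_K^{\nabla^2}u_h|_{2,K}\le |u-u_h|_{2,K}+|u-\Pi_K^{\nabla^2}u_h|_{2,K}.
\]
Strictly speaking, this already gives the stated bound. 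The two $H^1$-type terms $|u-u_h|_{1,K}+|u-\Pi_K^{\nabla}u_h|_{1,K}$ are nonnegative and can simply be added to the right-hand side.

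If instead one wants to follow \cite{dassi2025posteriorierrorestimatesc1} verbatim, where $S_K^{\nabla^2}$ is a concrete dofi–dofi form that also controls lower-order vertex values, the stabilisation is bounded by $h_K^{-2}\|w_h\|_{0,K}^2+|w_h|_{1,K}^2+|w_h|_{2,K}^2$-type quantities on the boundary dofs. The lower-order parts are then controlled by inserting $\Pi_K^{\nabla}u_h$ and $u$ and using Proposition~\ref{prop:est_poly} on $u-\Pi_K^{\nabla^2}u_h$. This is where the terms $|u-u_h|_{1,K}$ and $|u-\Pi_K^{\nabla}u_h|_{1,K}$ would arise.

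The main obstacle is this second route: checking that $\Pi_K^{\nabla}$ and $\Pi_K^{\nabla^2}$ coincide up to terms controlled by the $H^1$ errors. For that I would use the stability of $\Pi_K^{\nabla}$ in $|\cdot|_{1,K}$ together with a Poincar\'e inequality on $K$ under \ref{A1}–\ref{A3}. Under the abstract assumption \eqref{eq:stabA}, however, the first two steps already suffice.
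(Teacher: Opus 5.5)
Your argument is correct. It is essentially what the paper relies on when it defers to \cite[Lemma~19]{dassi2025posteriorierrorestimatesc1}: apply the upper bound in \eqref{eq:stabA} to $u_h-\Pi_K^{\nabla^2}u_h\in\ker(\Pi_K^{\nabla^2})$, then use the triangle inequality. Since $S_K$ in \eqref{stab_est} contains only the $\nabla^2$-stabilisation, this actually gives the sharper bound $S_K\lesssim|u-u_h|_{2,K}+|u-\Pi_K^{\nabla^2}u_h|_{2,K}$, so the $H^1$ terms are harmless additions and your speculative second route is unnecessary.
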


The following result establishes an upper bound for the local jump estimator $\cJ_{f}$ (cf. \eqref{jump_est}).
\begin{lemma}\label{ApVEPLem20}
	If  $u$ and $u_h$ solve the spectral problems \eqref{WMPr} and \eqref{WDPr}, respectively,  then 
	\begin{align*}
		\cJ_{f}&\lesssim \sum\limits_{K\in \Omega^h_f} \Big\{ |u-u_h|_{2,K} + S_K + h_K |u-u_h|_{2,K} + (h_K + h_K^2)|\lambda u -\lambda_h u_h |_{1,K} \nonumber\\
    &\quad + (h_K + h_K^2)||\nabla u_h - \bPi_K^0 (\nabla u_h)||_{0,K} + h_K\Xi_K + (h_K+h_K^2)\Lambda_K + h_K S_K \bigg\} \nonumber\\
    &\quad + h_f^{5/2}||[\![(\nabla^2\Pi_K^{\nabla^2} u_h )\boldsymbol{n}_K^{f}]\!]||_{0,f}.
	\end{align*}
\end{lemma}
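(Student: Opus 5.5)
The jump estimator $\cJ_f$ has two parts, and I will bound each one separately with facet bubble functions, localised to the patch $\omega_f:=\bigcup_{K\in\Omega^h_f}K$. The residual identity of Lemma~\ref{Lem13ApVEP} is the starting point. Any $v\in H_0^2(\omega_f)$, extended by zero, is an admissible test function in $\rV$. For such $v$, only the facet $f$ survives in the facet sums, and only the two elements $K^\pm$ survive in the element sums.

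\emph{Lower-order jump, $h_f^{3}\norm{\jump{\lambda_h\bPi_K^0(\bkappa\bPi_K^0(\nabla u_h))\cdot\bn_K^f}}_{0,f}^2$.}
\begin{itemize}
\item Let $q_f$ denote the jump and take $v=\psi_f q_f$, with $q_f$ extended constantly in the normal direction. I will take $\psi_f$ so that $\nabla(\psi_f q_f)$ vanishes on $f$, or at least so that the Hessian-jump term can be controlled.
\item The identity then isolates $\int_f q_f\,\psi_f q_f$. The remaining terms are: $b_{\omega_f}(\lambda u-\lambda_h u_h,v)$, $a(u-u_h,v)$, $a_K(\Pi_K^{\nabla^2}u_h-u_h,v)$, the $\bkappa(\nabla u_h-\bPi_K^0\nabla u_h)$ term, the volume residual against $v$, the data-oscillation term, and the Hessian-jump term $\int_f \jump{(\nabla^2\Pi_K^{\nabla^2}u_h)\bn}\cdot\nabla v$.
\item Cauchy--Schwarz together with \eqref{bubb3Edge}, \eqref{bubb3_5Edge} and \eqref{bubb4Edge} gives $|v|_{2,K}\lesssim h_f^{-3/2}\|q_f\|_{0,f}$ and $|v|_{1,K}\lesssim h_f^{-1/2}\|q_f\|_{0,f}$.
\item The Hessian-jump term, if not annihilated, is bounded by $h_f^{-1/2}$ times its $L^2(f)$ norm using \eqref{bubb2Edge}.
\item Multiplying through by $h_f^{3/2}$ produces terms of the form $h_K^{-1/2}$ to $h_K$ times the error quantities, and $h_K\Xi_K$ from the volume residual. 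This is where the factors $h_K$, $h_K^2$ and $h_K\Xi_K$ in the statement should come from.
\item The leftover Hessian-jump contribution is what I expect to produce the stray term $h_f^{5/2}\|\jump{(\nabla^2\Pi_K^{\nabla^2}u_h)\bn_K^f}\|_{0,f}$. I will simply keep it on the right-hand side.
\end{itemize}

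\emph{Hessian-jump part, $h_f\norm{\jump{(\nabla^2\Pi_K^{\nabla^2}u_h)\bn_K^f}}_{0,f}^2$.}
\begin{itemize}
\item I follow \cite[Lemma~19--20]{dassi2025posteriorierrorestimatesc1}, which treats this term for the biharmonic source problem. The jump is a polynomial on $f$, and its normal component is the relevant piece.
\item Test with $v=\psi_f q_f$, where $\psi_f$ is the bubble satisfying \eqref{bubb1Edge}. This turns $\int_f q_f\,\bn\cdot\nabla(\psi_f q_f)$ into a lower bound for $h_f^{-1}\|q_f\|_{0,f}^2$.
\item In the identity, the $a$-terms give $|u-u_h|_{2,K}$ and, via \eqref{eq:stabA}, $S_K$.
\item The buckling terms ($b$, the consistency term, data oscillation) pick up powers of $h_K$ through $|v|_{1,K}\lesssim h_K|v|_{2,K}$, a Poincar\'e-type bound on the bubble.
\item The volume and lower-order-jump residual terms are bounded by $h_K\Xi_K$ and by the previous step, respectively; the second route explains the $h_K S_K$ and $h_K|u-u_h|_{2,K}$ terms.
\end{itemize}

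\emph{Combining.} I sum the two bounds over $K\in\Omega^h_f$ and replace $\|\cdot\|_{0,K}$ norms of residuals by $\Lambda_K$ and $\Xi_K$ using their definitions. Mesh regularity \ref{A3} gives $h_f\simeq h_K$.

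\emph{Main obstacle.} The hard part is the coupling between the two jump residuals. With a single bubble, the term $\int_f\jump{\cdots}\cdot\nabla v$ and the term $\int_f\jump{\cdots}v$ cannot both be made to vanish. So each bound contains the other jump. In the order above, the lower-order jump is estimated first, leaving the $h_f^{5/2}$ Hessian-jump remainder, and that estimate is then reused in the Hessian bound. One must check that the powers of $h$ balance, so that no circular absorption is needed beyond keeping that $h_f^{5/2}$ remainder, which is higher order.
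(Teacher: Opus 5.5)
\paragraph{Verdict.} Your architecture is the paper's: two facet-bubble tests in the residual identity of Lemma~\ref{Lem13ApVEP}, each leaving a remainder in the other jump, then combined. The step you defer (``check that the powers of $h$ balance'') is where your argument breaks.

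First, $\nabla(\psi_f q_f)$ cannot vanish on $f$ while $\int_f q_f\,\psi_f q_f>0$. Its tangential part is the surface gradient of a nonzero function that vanishes on $\partial f$.

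Second, the fallback does not give a higher-order remainder. Take $\norm{q_f}_{0,f}^2\lesssim\int_f q_f\,\psi_f q_f$, $|v|_{2,K}\lesssim h_f^{-3/2}\norm{q_f}_{0,f}$, and \eqref{bubb2Edge}, which gives $\norm{\bn_K^f\cdot\nabla v}_{0,f}\lesssim h_f^{-1}\norm{q_f}_{0,f}$ (not $h_f^{-1/2}$). Dividing by $\norm{q_f}_{0,f}$ and multiplying by $h_f^{3/2}$ leaves $h_f^{1/2}\norm{\jump{(\nabla^2\Pi_K^{\nabla^2}u_h)\bn_K^f}}_{0,f}$. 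That is exactly the first half of $\cJ_f$. Your $h_f^{-1/2}$ would give $h_f\norm{\cdot}_{0,f}$; nothing in your list yields $h_f^{5/2}$.

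In your plan the Hessian bound still contains $\int_f\jump{\lambda_h\bPi_K^0(\bkappa\bPi_K^0(\nabla u_h))\cdot\bn_K^f}v$, hence $h_f^{3/2}$ times the lower-order jump. Reinserting the lower-order estimate there produces $h_f^{1/2}\norm{\jump{\cdot}}_{0,f}\lesssim\cdots+C\,h_f^{1/2}\norm{\jump{\cdot}}_{0,f}$ with $C$ not small. The circularity you wanted to avoid is genuine and cannot be absorbed. The paper reaches $h_f^{5/2}$ in \eqref{second_part_jump_eff} only through its normalisation: it puts $h_f^{-1}\norm{\cdot}_{0,f}^2$ on the left via \eqref{bubb1Edge} and bounds the coupling through $\norm{v_2}_{0,f}$. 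Neither is consistent with the bubble bounds you quote.

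\paragraph{Fix.} Your first instinct works once stated correctly: you cannot kill $\nabla(\psi_f q_f)$ on $f$, but killing its normal part is enough. Since $\Pi_K^{\nabla^2}u_h\in\rP_2(K)$, the jump $\jump{(\nabla^2\Pi_K^{\nabla^2}u_h)\bn_K^f}$ is constant on $f$. Every $v\in H^2_0(\omega_f)$, with $\omega_f:=\bigcup_{K\in\Omega^h_f}K$, vanishes on $\partial f$, so $\int_f\nabla_f v=\bzero$ and
\[
\int_f\jump{(\nabla^2\Pi_K^{\nabla^2}u_h)\bn_K^f}\cdot\nabla v=\Big(\jump{(\nabla^2\Pi_K^{\nabla^2}u_h)\bn_K^f}\cdot\bn_K^f\Big)\int_f\partial_{\bn_K^f}v .
\]
For the lower-order jump, take $\psi_f$ with $\partial_{\bn_K^f}\psi_f=0$ on $f$, with $q_f$ extended constantly in the normal direction. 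The Hessian term then vanishes identically. Symmetrically, test the Hessian jump with a bubble that vanishes on $f$ (compatible with \eqref{bubb1Edge} and \eqref{bubb4Edge}). This kills the lower-order coupling term, so the two estimates decouple and no absorption or $h_f^{5/2}$ remainder is needed. $\Xi_K$ is then removed with Lemma~\ref{ApVEPLem18}.

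\paragraph{Remaining caveats (both also in the paper's proof).}
\begin{itemize}
\item With consistent scaling, $\Lambda_K$ (which already carries the factor $h_K$) enters with an $O(1)$ coefficient. The argument therefore gives the bound with $\Lambda_K$ in place of $(h_K+h_K^2)\Lambda_K$.
\item The displayed identity shows that no $v\in H^2_0(\omega_f)$ sees the tangential part of $\jump{(\nabla^2\Pi_K^{\nabla^2}u_h)\bn_K^f}$. Treating only ``the normal component'' therefore does not control $\cJ_f$ as defined. That part needs a separate argument, e.g.\ via the $C^1$-continuity of $u_h$, which reduces it to a jump of $\nabla^2(\Pi_K^{\nabla^2}u_h-u_h)$.
\end{itemize}
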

\begin{proof}
Given any facet $f\in \mathcal{F}^h$. Let us define $\psi_l$ as the corresponding facet bubble function, and define the test functions $v_{1,f}:=\psi_l\jump{((\nabla^2\PiK u_h )\boldsymbol{n}_K^{l}) \cdot \boldsymbol{n}_K^{l}},v_{2,f}:=\psi_l\jump{\lambda_h\bPi_K^0(\bkappa \bPi_K^0(\nabla u_h ))\cdot \bn_K^f}\in H_0^2(T^+\cup T^-)\subset V$. Since $v_{i,f}$ ($i=1,2$) is a polynomial function, $v_{i,f}$ can be extended to $K^+\cup K^-$, where $K^+,K^-\in\Omega_h$ have $f$ as a common facet. Without losing generality, this extension is denoted by $v_i$. 

First, note that the estimates \eqref{bubb3} and \eqref{bubb4Edge}, together with \eqref{A3} lead to 
\begin{subequations}
    \begin{align}
        |v_i|_{1,K} &\lesssim h_f^{-1/2}||q_i||_{0,K}, \label{aux2-3_effa_edge}\\ 
        |v_i|_{2,K} &\lesssim  h_f^{-3/2} ||q_i||_{0,K}.\label{aux2-3_effb_edge}
    \end{align}
\end{subequations}
where $i=1,2$, $q_1=\jump{((\nabla^2\PiK u_h )\boldsymbol{n}_K^{l}) \cdot \boldsymbol{n}_K^{l}}$, and $q_2=\jump{\lambda_h\bPi_K^0(\bkappa \bPi_K^0(\nabla u_h ))\cdot \bn_K^f}.$

Next, the identities in \cite[(22)-(24)]{dassi2025posteriorierrorestimatesc1}, together with Lemma~\ref{Lem13ApVEP} and \eqref{bubb1Edge} imply that
\begin{align*}
h^{-1}_f||[\![(\nabla^2\Pi_K^{\nabla^2} u_h )\boldsymbol{n}_K^{f}]\!]||_{0,f}^2 &\lesssim 
\int_f [\![(\nabla^2\Pi_K^{\nabla^2} u_h )\boldsymbol{n}_K^{f}]\!]\cdot \nabla v_1\nonumber\\
&= \sum_{K\in \Omega^h_f} \bigg\{ a_K(u_h-u,v_1) + \lambda b_K(u,v_1)-\lambda_h b_K(u_h,v_1) \nonumber\\
&\quad + a_K(\Pi_K^{\nabla^2} u_h-u_h,v_1) \\
&\quad + \lambda_h \int_K (\bkappa(\nabla u_h - \bPi_K^0 (\nabla u_h)))\cdot \nabla v_1 \nonumber\\
&\quad - \int_{K}\lambda_h \vdiv\left( \bPi_K^0 (\bkappa \bPi_K^0(\nabla u_h))\right) v_1 \\
&\quad + \int_K \lambda_h \left(\bkappa \bPi_K^0(\nabla u_h)-\bPi_K^0(\bkappa \bPi_K^0(\nabla u_h))\right) \cdot \nabla v_1\bigg\} \nonumber \\
&\quad + \int_f \jump{\lambda_h\bPi_K^0(\bkappa \bPi_K^0(\nabla u_h ))\cdot \bn_K^{f}}v_1 \\
&\lesssim  \sum\limits_{K\in \Omega^h_f}\Bigg\{ |u-u_h|_{2,K}|v_1|_{2,K}+ |\lambda u -\lambda_h u_h |_{1,K}|v_1|_{1,K} \\
&\quad + |u_h-\Pi_K^{\nabla^2} u_h|_{2,K}|v_1|_{2,K} + ||\nabla u_h - \bPi_K^0 (\nabla u_h)||_{0,K}|v_1|_{1,K}\nonumber \\
&\quad +|| \lambda_h \vdiv\left( \bPi_K^0 (\bkappa \bPi_K^0(\nabla u_h))\right) ||_{0,K}|| v_1||_{0,K} \nonumber\\
&\quad + ||\lambda_h \left(\bkappa \bPi_K^0(\nabla u_h)-\bPi_K^0(\bkappa \bPi_K^0(\nabla u_h))\right) ||_{0,K}|v|_{1,K} \bigg\} \nonumber\\
&\quad + ||\jump{\lambda_h\bPi_K^0(\bkappa \bPi_K^0(\nabla u_h ))\cdot \bn_K^{f}}||_{0,f} ||v_1||_{0,f}.
\end{align*}
Inserting \eqref{aux2-3_effa_edge} and \eqref{aux2-3_effa_edge} in the previous inequality and multiplying by $h^{3/2}$, lead to
\begin{align}
    h^{1/2}_f||[\![(\nabla^2\Pi_K^{\nabla^2} u_h )\boldsymbol{n}_K^{f}]\!]||_{0,f} &\lesssim \sum\limits_{K\in \Omega^h_f}\Bigg\{ |u-u_h|_{2,K} + h_f|\lambda u -\lambda_h u_h |_{1,K} \nonumber\\
    &\quad + h_f||\nabla u_h - \bPi_K^0 (\nabla u_h)||_{0,K} + \Xi_K + h_f\Lambda_K + S_K \bigg\} \nonumber\\
    &\quad + h_f^{3/2}||\jump{\lambda_h\bPi_K^0(\bkappa \bPi_K^0(\nabla u_h ))\cdot \bn_K^{f}}||_{0,f}. \label{first_part_jump_eff}
\end{align}

Similarly, we use Lemma~\ref{Lem13ApVEP} and \eqref{bubb1Edge} to arrive at
\begin{align*}
h^{-1}_f|| \jump{\lambda_h\bPi_K^0(\bkappa \bPi_K^0(\nabla u_h ))\cdot \bn_K^f}||_{0,f}^2 &\lesssim 
\int_f \jump{\lambda_h\bPi_K^0(\bkappa \bPi_K^0(\nabla u_h ))\cdot \bn_K^f} v_2\nonumber\\
&= \sum_{K\in \Omega^h_f} \bigg\{ a_K(u_h-u,v_2) + \lambda b_K(u,v_2)-\lambda_h b_K(u_h,v_2) \nonumber\\
&\quad + a_K(\Pi_K^{\nabla^2} u_h-u_h,v_2) \\
&\quad + \lambda_h \int_K (\bkappa(\nabla u_h - \bPi_K^0 (\nabla u_h)))\cdot \nabla v_2 \nonumber\\
&\quad - \int_{K}\lambda_h \vdiv\left( \bPi_K^0 (\bkappa \bPi_K^0(\nabla u_h))\right) v_2 \\
&\quad + \int_K \lambda_h \left(\bkappa \bPi_K^0(\nabla u_h)-\bPi_K^0(\bkappa \bPi_K^0(\nabla u_h))\right) \cdot \nabla v_2\bigg\} \nonumber \\
&\quad + \int_f \jump{(\nabla^2\Pi_K^{\nabla^2} u_h )\boldsymbol{n}_K^{f}}\cdot \nabla v_2 \\
&\lesssim  \sum\limits_{K\in \Omega^h_f}\Bigg\{ |u-u_h|_{2,K}|v_2|_{2,K}+ |\lambda u -\lambda_h u_h |_{1,K}|v_2|_{1,K} \\
&\quad + |u_h-\Pi_K^{\nabla^2} u_h|_{2,K}|v_2|_{2,K} + ||\nabla u_h - \bPi_K^0 (\nabla u_h)||_{0,K}|v_2|_{1,K}\nonumber \\
&\quad +|| \lambda_h \vdiv\left( \bPi_K^0 (\bkappa \bPi_K^0(\nabla u_h))\right) ||_{0,K}|| v_2||_{0,K} \nonumber\\
&\quad + ||\lambda_h \left(\bkappa \bPi_K^0(\nabla u_h)-\bPi_K^0(\bkappa \bPi_K^0(\nabla u_h))\right) ||_{0,K}|v|_{1,K} \nonumber\\
&\quad + ||\jump{\lambda_h\bPi_K^0(\bkappa \bPi_K^0(\nabla u_h ))\cdot \bn_K^{f}}||_{0,f} ||v_2||_{0,f}.
\end{align*}
Thus,
\begin{align}
    h^{3/2}_f||\jump{\lambda_h\bPi_K^0(\bkappa \bPi_K^0(\nabla u_h ))\cdot \bn_K^f}||_{0,f} &\lesssim \sum\limits_{K\in \Omega^h_f}\Bigg\{ h_f |u-u_h|_{2,K} + h_f^2|\lambda u -\lambda_h u_h |_{1,K} \nonumber\\
    &\quad + h_f^2||\nabla u_h - \bPi_K^0 (\nabla u_h)||_{0,K} + h_f\Xi_K + h_f^2\Lambda_K + h_f S_K \bigg\} \nonumber\\
    &\quad + h_f^{5/2}||[\![(\nabla^2\Pi_K^{\nabla^2} u_h )\boldsymbol{n}_K^{f}]\!]||_{0,f}. \label{second_part_jump_eff}
\end{align}

The result follows from the addition of \eqref{first_part_jump_eff} and \eqref{second_part_jump_eff}, together with Lemma~\ref{ApVEPLem18} and the inequality $h_f\leq h_K$.
\end{proof}

Finally, we introduce the efficiency result of the global volume and jump estimators (up to data oscillation, stabilisation, and higher-order terms) as a direct consequence of Lemmas~\ref{ApVEPLem18}-\ref{ApVEPLem20} and summation for all $K\in\Omega^h$.
\begin{theorem}\label{th:efficiency}
	If  $u$ and $u_h$ solve the spectral problems \eqref{WMPr} and \eqref{WDPr}, respectively. Then, 
	\begin{align*}
		\Xi + \cJ \lesssim |u-u_h|_{2,\Omega} + S + \mathrm{HOTs},
	\end{align*}
    where the higher-order terms are given by 
    \begin{align}
    \mathrm{HOTs} &:= (h+h^2)|\lambda u-\lambda_h u_h|_{1,\Omega} +  (h+h^2)||\nabla u_h - \bPi_K^0 (\nabla u_h)||_{0,\Omega} + h\Xi + (h+h^2)\Lambda + hS \nonumber\\
    &\quad  + \sum_{f\in\mathcal{F}^h_\Omega}h_f^{5/2}||[\![(\nabla^2\Pi_K^{\nabla^2} u_h )\boldsymbol{n}_K^{f}]\!]||_{0,f}. \nonumber
    \end{align}
\end{theorem}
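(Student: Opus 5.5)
The plan is to assemble the local bounds of Lemmas~\ref{ApVEPLem18}, \ref{ApVEPLem19} and \ref{ApVEPLem20}, square them, and sum over elements and interior facets. The key observation is that every term on the right-hand side of these lemmas is either an element-local quantity on $K$ or on the patch $\Omega^h_f$, or a facet term $h_f^{5/2}\|\jump{(\nabla^2\Pi_K^{\nabla^2}u_h)\bn_K^f}\|_{0,f}$ that is carried into $\mathrm{HOTs}$ unchanged. No new test-function construction is needed at this stage.

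\textbf{Step 1 (volume term).} Square Lemma~\ref{ApVEPLem18}, use $(a_1+\dots+a_5)^2\lesssim\sum a_i^2$ and $h_K\le h$, and sum over $K\in\Omega^h$. Taking square roots then gives
\begin{align*}
\Xi\lesssim |u-u_h|_{2,\Omega}+S+h|\lambda u-\lambda_hu_h|_{1,\Omega}+h\Lambda+h\|\nabla u_h-\bPi_K^0(\nabla u_h)\|_{0,\Omega},
\end{align*}
where the broken seminorms and norms are understood elementwise. All terms beyond $|u-u_h|_{2,\Omega}+S$ already appear in $\mathrm{HOTs}$.

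\textbf{Step 2 (jump term).} Square Lemma~\ref{ApVEPLem20} and sum over $f\in\cF^h_\Omega$. By \ref{A1}--\ref{A3}, each element has a uniformly bounded number of facets. Hence each element quantity $|u-u_h|_{2,K}$, $S_K$, $\Xi_K$, $\Lambda_K$, and so on, is counted only a bounded number of times, and $\sum_f\sum_{K\in\Omega^h_f}(\cdot)_K^2\lesssim\sum_K(\cdot)_K^2$. Bounding $h_K+h_K^2\le h+h^2$ then yields $\cJ\lesssim|u-u_h|_{2,\Omega}+S+h|u-u_h|_{2,\Omega}+\mathrm{HOTs}$. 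The facet term $\sum_f h_f^{5/2}\|\jump{\cdot}\|_{0,f}$ dominates its $\ell^2$ counterpart, which is why it is placed in $\mathrm{HOTs}$ in $\ell^1$ form. The extra $h|u-u_h|_{2,\Omega}$ is absorbed into $|u-u_h|_{2,\Omega}$, since $h$ is bounded by the domain diameter.

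\textbf{Step 3 (combination).} Add the bounds from Steps 1 and 2. If needed, use Lemma~\ref{ApVEPLem19} to make clear that $S$ is itself controlled by error quantities; since $S$ appears explicitly on the right-hand side of the statement, this is only a consistency remark. The main obstacle is the bookkeeping in Step 2, which requires the finite-overlap argument on facet patches and consistent treatment of $h_f$ versus $h_K$ through \ref{A3}. It also requires checking that no $\Xi_K$ appears without a power of $h$, so that it can be kept in $\mathrm{HOTs}$ as $h\Xi$ rather than needing to be absorbed into the left-hand side. Lemma~\ref{ApVEPLem20} indeed gives $h_K\Xi_K$, so no absorption is required.
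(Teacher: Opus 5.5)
Your proposal is correct and takes the same route as the paper, which obtains the theorem directly from Lemmas~\ref{ApVEPLem18}--\ref{ApVEPLem20} by summing over the mesh. Your Steps 1--3 spell out the bookkeeping the paper leaves implicit: a uniformly bounded number of facets per element, $h_K\le h$, absorbing $h|u-u_h|_{2,\Omega}$ into $|u-u_h|_{2,\Omega}$, and $\bigl(\sum_f a_f^2\bigr)^{1/2}\le\sum_f a_f$ for the facet term, with Lemma~\ref{ApVEPLem19} indeed unnecessary since $S$ already appears on the right-hand side.
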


\section{Numerical results}\label{sec:numerical_examples}
This section presents several numerical experiments showing the performance of the estimator introduced in Section~\ref{SEC:EST_A_POST}. We test the behaviour of the estimator under uniform and adaptive refinement for a variety of polytopal meshes.
Finally, we study the applicability of the adaptive routine with an application-oriented problem.

The method is implemented in the library \texttt{vem++} formally introduced in  \cite{dassi2025vem++} and the generalised eigenvalue problem arising from such discretisations is solved inside \texttt{vem++} with the library \texttt{SLEPC} (cf. \cite{slepc}). We follow the standard strategy
$$\textnormal{SOLVE} \rightarrow \textnormal{ESTIMATE} \rightarrow \textnormal{MARK} \rightarrow \textnormal{REFINE}.$$
We adopt two alternatives for the REFINE stage: the two dimensional adaptive refinement uses the Matlab-based method from \cite{yu2021implementationpolygonalmeshrefinement}, connecting each edge mid-point to the polygon barycenter. On the other hand, the three dimensional routine employs the library \texttt{p4est} \cite{p4est} through the \texttt{GridapP4est} module of the Julia package \texttt{Gridap} \cite{gridap}. While restricted to cubical meshes, \texttt{p4est} supports the generation of hanging nodes, which are naturally handled by the VEM. It is worth noting that the refinement strategy does not rely on the specific features of \texttt{vem++}. As a consequence, the proposed implementation can be applied to more general (possibly non-convex) polytopal meshes, provided that a suitable refinement routine is chosen. In addition, we employ a D\"orfler/Bulk marking strategy as follows: mark the subset of mesh elements $\Omega^h_* \subseteq \Omega^h$ with the largest estimated errors such that for $\delta_* = \frac{1}{2}\in [0,1]$, we have
$$\delta_* \sum_{K\in \Omega^h} \eta_K^2 \leq \sum_{K\in \Omega^h_*} \eta_K^2.$$ 

The experimental order of convergence $r_{j+1}(*)$ against the total number of degrees of freedom $\textnormal{\#DoFs}$ and the effectivity index $\textnormal{eff}$ are computed as follows
\begin{align*}
    r(*)_{j+1} = -d \frac{\log\left(\frac{*_{j+1}}{*_{j}}\right)}{\log\left(\frac{\textnormal{\#DoFs}_{j+1}}{\textnormal{\#DoFs}_{j}}\right)}, \quad 
    \textnormal{eff}_{j} = \frac{(\eta^{2}_{i})_{j}}{(\textnormal{e}_{i,h})_j},
\end{align*}
where $\eta_{i}$ and $\textnormal{e}_{i,h}=|\lambda_{i,h} - \lambda_i|$ are the error and global total error estimator associated to the $i$-th eigenvalue (same notation holds for all the contributing terms of the error estimator cf. \eqref{terms_estimator}). It is worth noting that the convergence rate holds for the errors and for the global error estimator, together with each of its constituent terms. Finally, we remark that the stabilisation operator can be modified by introduce a coefficient $\alpha$ 
as follows:
$$\alpha S_K^{\nabla^2}(u_h-\Pi_K^{\nabla^2} u_h,u_h-\Pi_K^{\nabla^2} u_h).$$
The influence of the parameters $\alpha$ 
on the accuracy of the scheme was previously addressed in \cite{MRV2018} through a sensitivity analysis (see also \cite{dassi2025posteriorierrorestimatesc1}). In particular, we select the \texttt{DOFI-DOFI} type of stabilisation as in \cite{MV2} and $\alpha = 1$ for both 2D and 3D case.
\begin{figure}[h!]
    \centering
    \subfigure[Voronoi. \label{fig:polygonal}]{\includegraphics[width=0.24\textwidth,trim={2.7cm 1cm 2.4cm 0.7cm},clip]{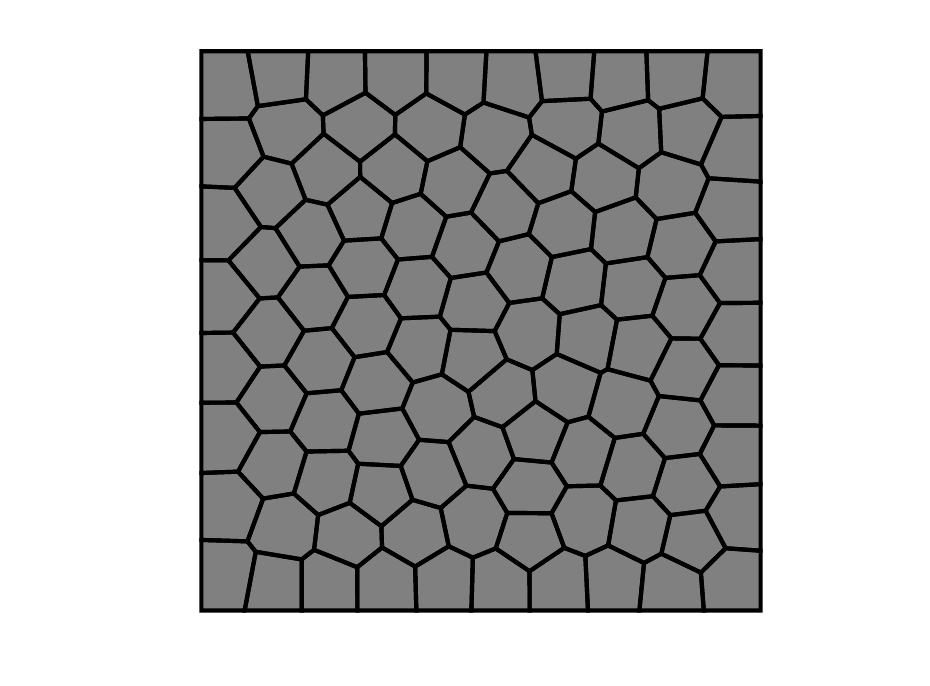}}  
    \subfigure[Perturbed Voronoi. \label{fig:distortionpolygonal}]{\includegraphics[width=0.24\textwidth,trim={2.7cm 1cm 2.4cm 0.7cm},clip]{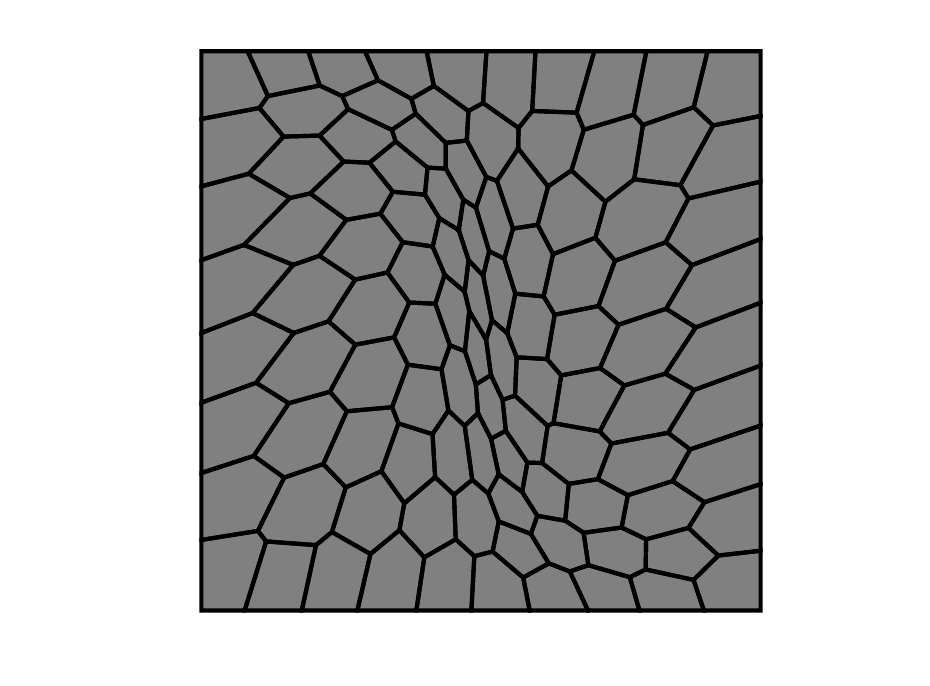}} 
    \subfigure[Square. \label{fig:square}]{\includegraphics[width=0.24\textwidth,trim={2.7cm 1cm 2.4cm 0.7cm},clip]{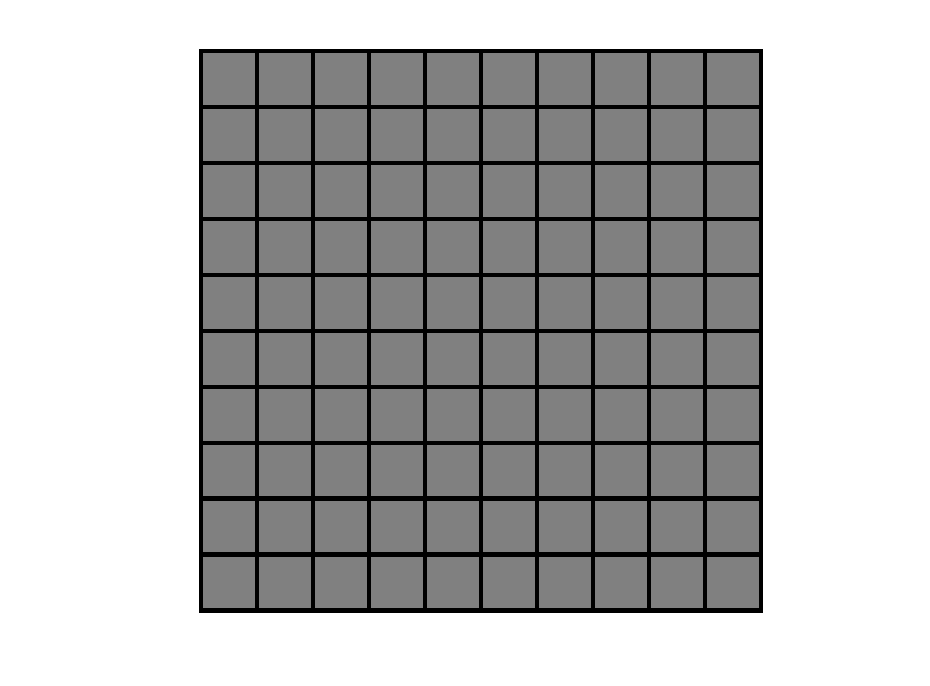}}
    \subfigure[Cube (cross-section). \label{fig:cube}]
    {\includegraphics[width=0.235\textwidth,trim={8.cm 4.5cm 8.cm 3.75cm},clip]{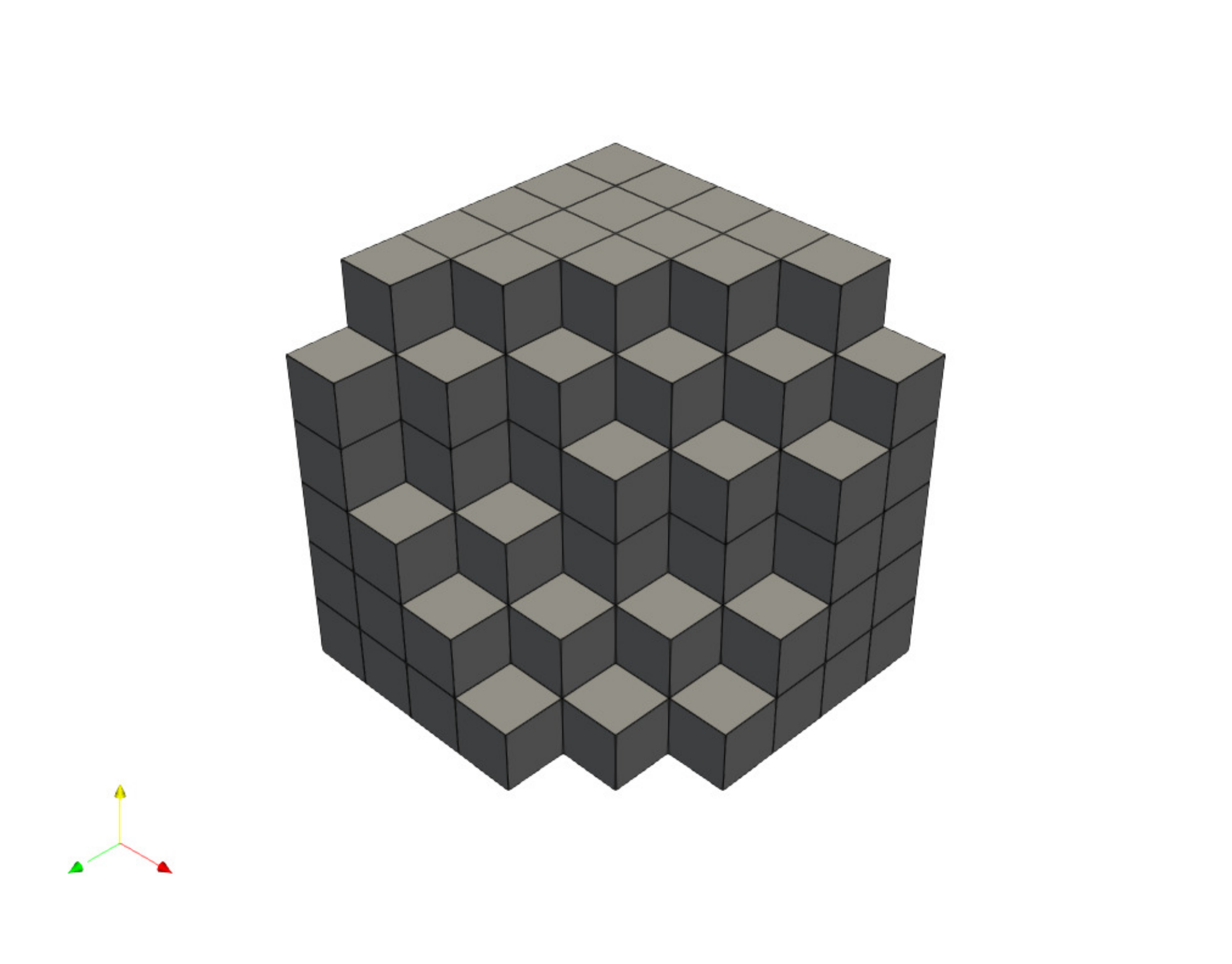}}
    \caption{Example 1. Polytopal discretisations used for the uniform refinement test.}\label{meshes_uniform}
\end{figure}

\subsection{Example 1. The estimator under uniform refinement} 
This test solves the buckling eigenvalue problem \eqref{eq:MPr} with the plane stress tensor taken as the identity ($\bkappa = \mathbb{I}_{d\times d}$), focusing on the first eigenvalue $\lambda_1$ in both two and three dimension. We consider the unit square $\Omega = (0,1)^2$ under a variety of discretisations (see Figure~\ref{fig:polygonal}-\ref{fig:square}) with boundary \textbf{CP} boundary conditions (cf. \eqref{eq:CP}) and the unit cube $\Omega = (0,1)^3$ with the discretisation shown in Figure~\ref{fig:cube} and \textbf{SSP} conditions as in \eqref{eq:SSP}, the stabilisation parameter is set to unity value.

It is well known that the lowest buckling coefficient of the proposed 2D problem is given by $\lambda_1 \approx 5.3037$ (see e.g. \cite{MR2009}) with $L=1$ and $D=\pi^2$. Whereas, the associated lowest eigenvalue for the 3D has an exact value of $\lambda_1 = 3\pi^2$ for $L,D=1$ (cf. \cite{DV_camwa2022}). Note that, the convexity of the considered domains ensures the smoothness of the associated eigenfunction $u_1$. Therefore, the optimal convergence rate is recovered for the error $\textnormal{e}_{1,h}$, and the global error estimator $\eta$ (see Table~\ref{tab:convergence2DExample1}-\ref{tab:convergence3DExample1}). Moreover, for any $K\in \Omega^h$ and constant $\bkappa$, the local data oscillation $\Lambda_K$ (cf. \eqref{data_est}) is exactly zero since $\bkappa \nabla \Pi_K^{\nabla} u_h=\Pi_K^{\nabla}(\bkappa \nabla \Pi_K^{\nabla} u_h)$, confirmed experimentally in both 2D and 3D cases.

The convergence history of the eigenvalue error $\textnormal{e}_{1,h}$ together with the global error estimator $\eta$ under (two-dimensional) uniform refinement is reported in Table~\ref{tab:convergence2DExample1}. Whereas, the three dimensional behaviour (associated with the eigenvalue error $\textnormal{e}_{1,h}$) is summarised in Table~\ref{tab:convergence3DExample1}, for this particular case, the dominant terms of the estimator ($\cJ$ and $S$) decay slower than $\Xi$, indicating optimal performance of the estimator once the mesh is sufficiently fine (results for very fine meshes are omitted due to the high computational cost). The reliability of the estimator is confirmed (see Theorem~\ref{th:reliability}) and the expected convergence rate of $O(h^2)$ is achieved (cf. Theorem~\ref{th:convergenceRates}) for both experiments. Finally, the effectivity index $\textnormal{eff}$ remains bounded for each test case as predicted by Theorem~\ref{th:efficiency}.

\begin{table}[!t]
    \setlength{\tabcolsep}{2pt}
    \begin{center}
        \resizebox{\textwidth}{!}{ 
            \begin{tabular}{| c | c | c | c | c | c | c | c | c | c | c | c | c | c | c |}
                \hline
                {$\Omega^h$} & 
                {$\textnormal{\#DoFs}$} & 
                {$\textnormal{e}_{1,h}$} & 
                {$r(\textnormal{e}_{1,h})$} & 
                {$\eta^2$} & 
                {$r(\eta^2)$} & 
                {$\Xi^2$} & 
                {$r(\Xi^2)$} & 
                {$\mathcal{J}^2$} & 
                ${r(\mathcal{J}^2)}$ & 
                {$S^2$} & 
                {$r(S^2)$} & 
                {$\Lambda^2$} & 
                {$r(\Lambda^2)$} & 
                {$\textnormal{eff}$} \\ [1pt]
                \hline 
                \hline
                \multirow{4}{*}{\rule{0pt}{19ex} \rotatebox{90}{Voronoi}} 
                & 606 & 3.11e-01 & * & 9.22e+00 & * & 7.28e-01 & * & 4.80e+00 & * & 3.69e+00 & * & 8.38e-33 & * & 2.97e+01 \\
                & 2406 & 1.23e-01 & 1.34e+00 & 2.61e+00 & 1.83e+00 & 5.28e-02 & 3.81e+00 & 1.22e+00 & 1.99e+00 & 1.34e+00 & 1.47e+00 & 2.04e-33 & * & 2.12e+01 \\
                & 5406 & 6.62e-02 & 1.53e+00 & 1.30e+00 & 1.72e+00 & 1.04e-02 & 4.02e+00 & 6.02e-01 & 1.75e+00 & 6.91e-01 & 1.63e+00 & 9.02e-34 & * & 1.97e+01 \\
                & 9597 & 3.94e-02 & 1.81e+00 & 7.59e-01 & 1.88e+00 & 3.31e-03 & 3.98e+00 & 3.49e-01 & 1.90e+00 & 4.06e-01 & 1.85e+00 & 5.10e-34 & * & 1.92e+01 \\
                & 14997 & 2.33e-02 & 2.37e+00 & 4.66e-01 & 2.19e+00 & 1.44e-03 & 3.74e+00 & 2.21e-01 & 2.05e+00 & 2.44e-01 & 2.29e+00 & 3.22e-34 & * & 2.00e+01 \\
                & 21594 & 1.63e-02 & 1.95e+00 & 3.21e-01 & 2.05e+00 & 6.80e-04 & 4.10e+00 & 1.54e-01 & 1.98e+00 & 1.66e-01 & 2.10e+00 & 2.16e-34 & * & 1.97e+01 \\
                & 29394 & 1.37e-02 & 1.11e+00 & 2.52e-01 & 1.58e+00 & 3.70e-04 & 3.95e+00 & 1.16e-01 & 1.84e+00 & 1.35e-01 & 1.33e+00 & 1.68e-34 & * & 1.83e+01 \\
                & 38364 & 1.12e-02 & 1.52e+00 & 1.95e-01 & 1.90e+00 & 2.21e-04 & 3.85e+00 & 8.80e-02 & 2.06e+00 & 1.07e-01 & 1.77e+00 & 1.26e-34 & * & 1.74e+01 \\
                & 48579 & 9.21e-03 & 1.66e+00 & 1.56e-01 & 1.89e+00 & 1.28e-04 & 4.63e+00 & 6.93e-02 & 2.03e+00 & 8.68e-02 & 1.77e+00 & 9.69e-35 & * & 1.70e+01 \\
                & 59976 & 7.40e-03 & 2.08e+00 & 1.25e-01 & 2.09e+00 & 8.75e-05 & 3.63e+00 & 5.71e-02 & 1.85e+00 & 6.82e-02 & 2.29e+00 & 8.21e-35 & * & 1.69e+01 \\
                \hline
                Avg. & * &  * & 1.71e+00 & * & 1.90e+00 & * & 3.97e+00 & * & 1.94e+00 & * & 1.83e+00 & * & * & 1.99e+01 \\
                \hline
                \multirow{4}{*}{\rotatebox{90}{Perturbed Voronoi}\rule{0pt}{25ex}}  
                & 606 & 1.63e+00 & * & 1.37e+01 & * & 6.97e-01 & * & 3.08e+00 & * & 9.88e+00 & * & 7.98e-33 & * & 8.39e+00 \\
                & 2406 & 6.66e-01 & 1.30e+00 & 6.48e+00 & 1.08e+00 & 1.77e-01 & 1.99e+00 & 1.11e+00 & 1.48e+00 & 5.19e+00 & 9.34e-01 & 2.63e-33 & * & 9.72e+00 \\
                & 5406 & 3.29e-01 & 1.74e+00 & 3.40e+00 & 1.59e+00 & 4.02e-02 & 3.66e+00 & 5.62e-01 & 1.69e+00 & 2.80e+00 & 1.53e+00 & 1.29e-33 & * & 1.03e+01 \\
                & 9597 & 2.04e-01 & 1.68e+00 & 2.15e+00 & 1.59e+00 & 1.59e-02 & 3.23e+00 & 3.50e-01 & 1.65e+00 & 1.79e+00 & 1.56e+00 & 6.88e-34 & * & 1.06e+01 \\
                & 14997 & 1.30e-01 & 2.03e+00 & 1.39e+00 & 1.96e+00 & 6.28e-03 & 4.16e+00 & 2.27e-01 & 1.94e+00 & 1.16e+00 & 1.94e+00 & 4.78e-34 & * & 1.07e+01 \\
                & 21594 & 9.27e-02 & 1.84e+00 & 1.00e+00 & 1.81e+00 & 3.38e-03 & 3.40e+00 & 1.66e-01 & 1.73e+00 & 8.31e-01 & 1.82e+00 & 3.31e-34 & * & 1.08e+01 \\
                & 29394 & 7.08e-02 & 1.75e+00 & 7.62e-01 & 1.76e+00 & 1.84e-03 & 3.96e+00 & 1.22e-01 & 1.99e+00 & 6.38e-01 & 1.71e+00 & 2.41e-34 & * & 1.08e+01 \\
                & 38364 & 5.51e-02 & 1.89e+00 & 5.89e-01 & 1.93e+00 & 1.19e-03 & 3.28e+00 & 9.47e-02 & 1.90e+00 & 4.93e-01 & 1.94e+00 & 1.93e-34 & * & 1.07e+01 \\
                & 48579 & 4.38e-02 & 1.94e+00 & 4.69e-01 & 1.93e+00 & 6.62e-04 & 4.94e+00 & 7.51e-02 & 1.96e+00 & 3.93e-01 & 1.92e+00 & 1.45e-34 & * & 1.07e+01 \\
                & 59976 & 3.64e-02 & 1.75e+00 & 3.89e-01 & 1.76e+00 & 4.37e-04 & 3.94e+00 & 6.27e-02 & 1.71e+00 & 3.26e-01 & 1.77e+00 & 1.22e-34 & * & 1.07e+01 \\
                \hline
                Avg. & * &  * & 1.77e+00 & * & 1.71e+00 & * & 3.62e+00 & * & 1.78e+00 & * & 1.68e+00 & * & * & 1.03e+01 \\
                \hline
                \multirow{4}{*}{\rule{0pt}{18ex} \rotatebox{90}{Square}} 
                & 363 & 4.56e-01 & * & 1.45e+01 & * & 1.54e+00 & * & 8.90e+00 & * & 4.04e+00 & * & 3.76e-33 & * & 3.17e+01 \\
                & 1323 & 1.34e-01 & 1.90e+00 & 3.92e+00 & 2.02e+00 & 1.31e-01 & 3.82e+00 & 2.54e+00 & 1.94e+00 & 1.25e+00 & 1.81e+00 & 1.34e-33 & * & 2.93e+01 \\
                & 2883 & 6.15e-02 & 2.00e+00 & 1.78e+00 & 2.03e+00 & 2.76e-02 & 4.00e+00 & 1.17e+00 & 2.00e+00 & 5.82e-01 & 1.96e+00 & 6.65e-34 & * & 2.89e+01 \\
                & 5043 & 3.50e-02 & 2.01e+00 & 1.01e+00 & 2.03e+00 & 8.93e-03 & 4.03e+00 & 6.65e-01 & 2.01e+00 & 3.33e-01 & 2.00e+00 & 2.89e-34 & * & 2.87e+01 \\
                & 7803 & 2.26e-02 & 2.01e+00 & 6.47e-01 & 2.03e+00 & 3.70e-03 & 4.04e+00 & 4.28e-01 & 2.01e+00 & 2.15e-01 & 2.01e+00 & 2.35e-34 & * & 2.87e+01 \\
                & 11163 & 1.57e-02 & 2.01e+00 & 4.50e-01 & 2.02e+00 & 1.79e-03 & 4.04e+00 & 2.99e-01 & 2.01e+00 & 1.50e-01 & 2.01e+00 & 1.55e-34 & * & 2.86e+01 \\
                & 15123 & 1.16e-02 & 2.01e+00 & 3.31e-01 & 2.02e+00 & 9.71e-04 & 4.04e+00 & 2.20e-01 & 2.01e+00 & 1.10e-01 & 2.01e+00 & 1.04e-34 & * & 2.85e+01 \\
                & 19683 & 8.92e-03 & 2.00e+00 & 2.54e-01 & 2.02e+00 & 5.71e-04 & 4.04e+00 & 1.69e-01 & 2.01e+00 & 8.46e-02 & 2.01e+00 & 7.36e-35 & * & 2.85e+01 \\
                & 24843 & 7.07e-03 & 2.00e+00 & 2.01e-01 & 2.02e+00 & 3.57e-04 & 4.03e+00 & 1.34e-01 & 2.01e+00 & 6.70e-02 & 2.01e+00 & 6.64e-35 & * & 2.84e+01 \\
                & 30603 & 5.74e-03 & 1.99e+00 & 1.63e-01 & 2.02e+00 & 2.34e-04 & 4.03e+00 & 1.08e-01 & 2.01e+00 & 5.43e-02 & 2.01e+00 & 6.04e-35 & * & 2.84e+01 \\
                \hline
                Avg. & * &  * & 1.99e+00 & * & 2.02e+00 & * & 4.01e+00 & * & 2.00e+00 & * & 1.98e+00 & * & * & 2.90e+01 \\
                \hline
            \end{tabular}
        }
    \end{center}
    \vspace{-0.5cm}
    \caption{Example 1. Convergence history for a variety of 2D meshes of the eigenvalue error $\textnormal{e}_{1,h}=|\lambda_{1,h} - \lambda_1|$, and the global error estimator $\eta^2$ together with  the volume residual, jump residual, stabilisation and data oscillation terms $\Xi^2$, $\mathcal{J}^2$, $S^2$, and $\Lambda^2$. The effectivity index $\textnormal{eff}$ of the estimator is also shown in the last column.}
    \label{tab:convergence2DExample1}
\end{table}

\begin{table}[!t]
    \setlength{\tabcolsep}{2pt}
    \begin{center}
        \resizebox{\textwidth}{!}{ 
            \begin{tabular}{| c | c | c | c | c | c | c | c | c | c | c | c | c | c | c |}
                \hline
                {$\Omega^h$} &
                {$\textnormal{\#DoFs}$} & 
                {$\textnormal{e}_{1,h}$} & 
                {$r(\textnormal{e}_{1,h})$} & 
                {$\eta^2$} & 
                {$r(\eta^2)$} & 
                {$\Xi^2$} & 
                {$r(\Xi^2)$} & 
                {$\mathcal{J}^2$} & 
                ${r(\mathcal{J}^2)}$ & 
                {$S^2$} & 
                {$r(S^2)$} & 
                {$\Lambda^2$} & 
                {$r(\Lambda^2)$} & 
                {$\textnormal{eff}$} \\ [1pt]
                \hline 
                \hline
                \multirow{4}{*}{\rule{0pt}{16ex} \rotatebox{90}{Cube}} 
                & 1372 & 3.82e+00 & * & 8.29e+01 & * & 7.41e+01 & * & 6.74e+00 & * & 2.03e+00 & * & 6.76e-31 & * & 2.17e+01 \\
                & 5324 & 1.92e+00 & 1.53e+00 & 1.79e+01 & 3.39e+00 & 1.49e+01 & 3.55e+00 & 1.79e+00 & 2.94e+00 & 1.21e+00 & 1.14e+00 & 1.97e-31 & * & 9.33e+00 \\
                & 13500 & 1.06e+00 & 1.90e+00 & 6.32e+00 & 3.36e+00 & 4.76e+00 & 3.68e+00 & 8.27e-01 & 2.48e+00 & 7.33e-01 & 1.62e+00 & 1.59e-31 & * & 5.93e+00 \\
                & 27436 & 6.70e-01 & 1.96e+00 & 2.87e+00 & 3.34e+00 & 1.91e+00 & 3.86e+00 & 4.78e-01 & 2.32e+00 & 4.79e-01 & 1.80e+00 & 9.04e-32 & * & 4.28e+00 \\
                & 48668 & 4.58e-01 & 1.99e+00 & 1.54e+00 & 3.24e+00 & 8.99e-01 & 3.95e+00 & 3.11e-01 & 2.24e+00 & 3.34e-01 & 1.88e+00 & 6.30e-32 & * & 3.38e+00 \\
                & 78732 & 3.32e-01 & 2.01e+00 & 9.38e-01 & 3.11e+00 & 4.74e-01 & 3.99e+00 & 2.19e-01 & 2.19e+00 & 2.45e-01 & 1.93e+00 & 4.87e-32 & * & 2.83e+00 \\
                & 119164 & 2.51e-01 & 2.02e+00 & 6.22e-01 & 2.98e+00 & 2.73e-01 & 4.01e+00 & 1.63e-01 & 2.16e+00 & 1.87e-01 & 1.96e+00 & 2.56e-32 & * & 2.48e+00 \\
                & 171500 & 1.96e-01 & 2.02e+00 & 4.40e-01 & 2.86e+00 & 1.67e-01 & 4.02e+00 & 1.26e-01 & 2.13e+00 & 1.47e-01 & 1.98e+00 & 2.46e-32 & * & 2.24e+00 \\
                \hline
                Avg. & * &  * & 1.92e+00 & * & 3.18e+00 & * & 3.87e+00 & * & 2.35e+00 & * & 1.76e+00 & * & * & 6.52e+00 \\
                \hline
            \end{tabular}
        }
    \end{center}
    \vspace{-0.5cm}
    \caption{Example 1. Convergence history for the 3D cube mesh of the eigenvalue error $\textnormal{e}_{1,h}=|\lambda_{1,h} - \lambda_1|$, and the global error estimator $\eta^2$ together with  the volume residual, jump residual, stabilisation and data oscillation terms $\Xi^2$, $\mathcal{J}^2$, $S^2$, and $\Lambda^2$. The effectivity index $\textnormal{eff}$ of the estimator is also shown in the last column.}
    \label{tab:convergence3DExample1}
\end{table}

\begin{figure}[h!]
    \centering
    \subfigure[L-shaped. \label{fig:LShape}]{\includegraphics[width=0.24\textwidth,trim={2.7cm 1cm 2.4cm 0.7cm},clip]{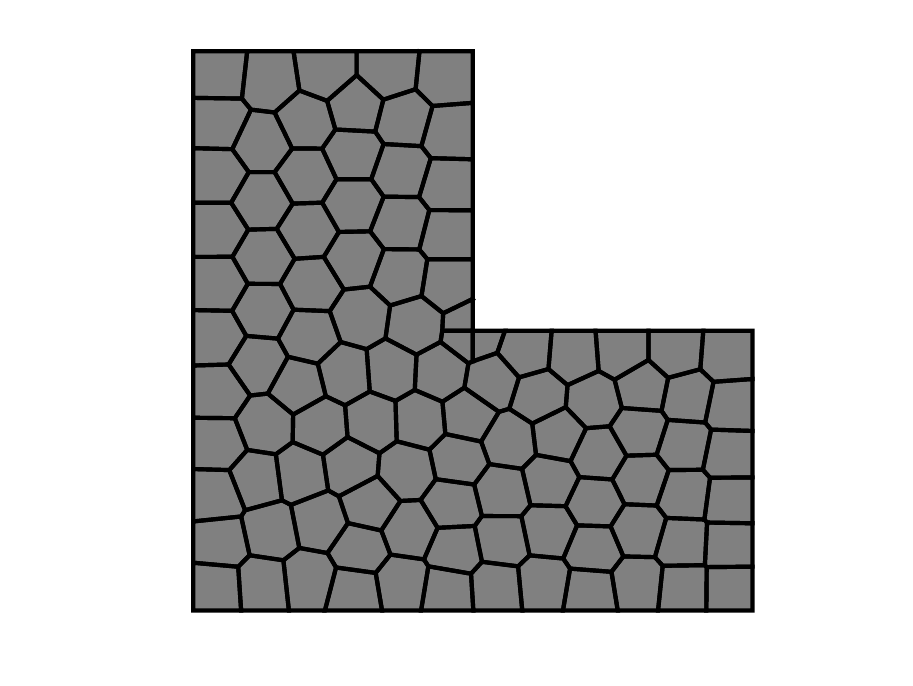}}  
    \subfigure[Fichera cube. \label{fig:Fichera}]{\includegraphics[width=0.24\textwidth,trim={5.5cm 0.05cm 4.cm 1.cm},clip]{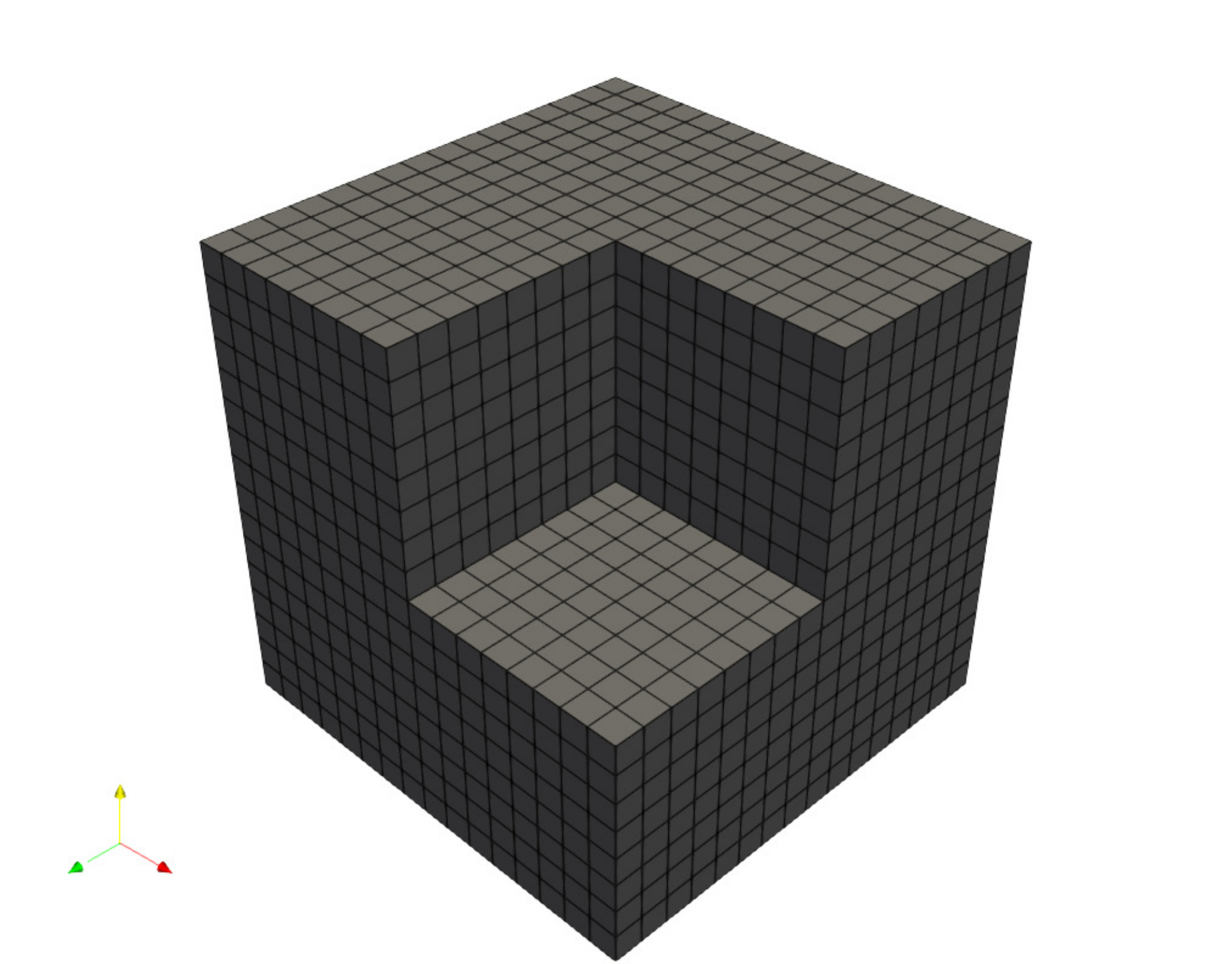}} 
    \subfigure[Perfored circle. \label{fig:CircleCircle}]{\includegraphics[width=0.24\textwidth,trim={2.7cm 1cm 2.4cm 0.7cm},clip]{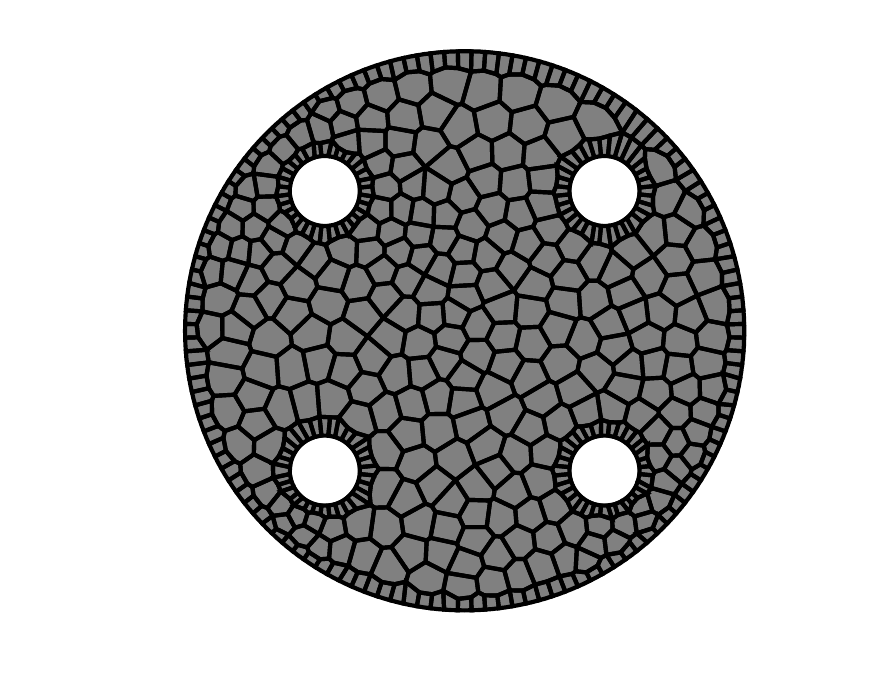}}
    \subfigure[Perfored cube (cross-section). \label{fig:CubeCube}]
    {\includegraphics[width=0.235\textwidth,trim={6.5cm 3.25cm 6.25cm 2.25cm},clip]{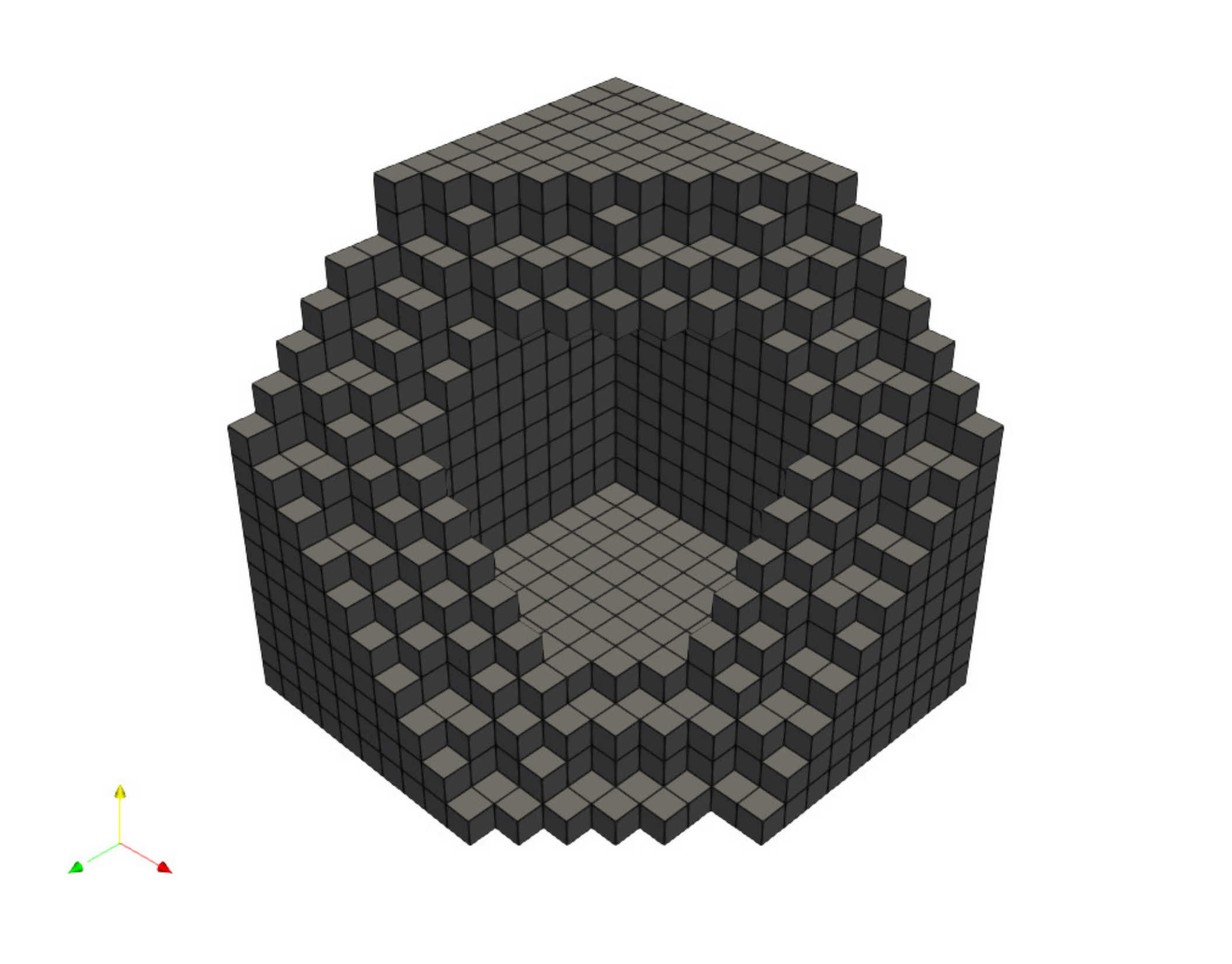}}
    \caption{Example 2. Polytopal discretisations used for the adaptive refinement test.}\label{meshes_adaptive}
\end{figure}
\subsection{Example 2. The estimator under adaptive refinement}\label{sec:example2}
In this experiment we consider \eqref{eq:MPr} loaded in the $x$-direction by a linearly distributed in-plane traction, i.e. the plane stress tensor function is given by 
$$\bkappa_{2\mathrm{D}} = \begin{pmatrix}
    1-\kappa_0 y & 0 \\
    0 & 0
\end{pmatrix} \quad \text{and} \quad  \bkappa_{3\mathrm{D}} = \begin{pmatrix}
    1-\kappa_0 y & 0 & 0 \\
    0 & 0 & 0 \\
    0 & 0 & 0
\end{pmatrix}, \quad \text{with} \quad \kappa_0\in \left\{0,\frac{2}{3},1,\frac{4}{3}\right\}.$$

We begin by examining the classical L-shaped and Fichera cube domains (shown in Figs~\ref{fig:LShape}-\ref{fig:Fichera}), given by $\Omega=(0,1)^2 \setminus (1/2,1)^2$ and $\Omega = (0,1)^3 \setminus (1/2,1)^3$. For this test, \textbf{CP} boundary conditions are imposed and the parameter $\kappa_0$ is set to 0 for the two-dimensional and $2/3$ for the three-dimensional case. Exact eigenvalues for the L-shaped domain in the isotropic case $\bkappa=\mathbb{I}_{d\times d}$ are available in the literature \cite{MoraVelasquez2020}. 

The second part of this test considers perfored domains. The two-dimensional domain consist on a circle centred in $(1/2,1/2)$ with radius $1/2$, the perforations are given by four circles with radius $1/16$ centred at $(1/4,1/4)$, $(1/4,3/4)$, $(3/4,1/4)$, and $(3/4,3/4)$ (see Fig~\ref{fig:CircleCircle}). Whereas, the three-dimensional domain is given by $(0,1)^3\setminus(1/4,3/4)^3$, a unit cube domain perfored with an inner cube (cf. Fig~\ref{fig:CubeCube}). In this test, we impose \textbf{SSP} conditions on the outer  boundary and is set free in the perforations, the respective values for $\kappa_0$ are set to $1$ and $4/3$. The first eigenvalue for a similar perfored circle domain (i.e. the perforations are located in different positions) in the isotropic case $\bkappa=\mathbb{I}_{d\times d}$ and \textbf{CP}-free boundary conditions is available in \cite{ADAK2023115763}. 

Note that the non-convexity of the domains proposed, together with the presence of perforations, naturally calls for adaptive refinement routines. These geometric features generate singularities at re-entrant corners and around internal boundaries, and adaptive refinement provides an efficient way to recover optimal accuracy while avoiding unnecessary global mesh refinement. We recall that to the best of the authors’ knowledge, no exact eigenvalues have been reported for the configurations proposed in this numerical example, which here are compute with the values $D,L=1$. 

Table~\ref{tab:convergence2D3DExample2} summarise the convergence history of the global error estimator $\eta_i$ ($i=1,2,3$) under adaptive refinement. We recover the convergence rate predicted by Theorem~\ref{th:convergenceRates} for all the tested values of $\kappa_0$ and meshes shown in Fig~\ref{meshes_adaptive}. Moreover, the eigenvalues $\lambda_{i,h}$ obtained by our scheme are displayed in Table~\ref{tab:convergence2D3DExample2} for each refinement step and the snapshot of the associated eigenfunctions for each test case are shown in Figs~\ref{fig:snapshotsLShape},\ref{fig:snapshotsPCircle},\ref{fig:snapshotsFichera}, and \ref{fig:snapshotsPCube}, respectively. 

On the other hand, we observe that the marking procedure is inherently tied to the eigenvalue we are interested in. For example, in the perforated cube section of Table~\ref{tab:convergence2D3DExample2}, the third eigenvalue computed by the scheme in the last refinement step has a value of $164.6964$, which is larger than the first eigenvalue $70.17242$. These values appear directly in the global volume estimator $\Xi$ and the global jump estimator $\mathcal{J}$. Then, when targeting the third eigenvalue, fewer elements need to be included in $\mathcal{T}_*^h$ in order to meet the criterion required by the D"orfler/Bulk marking strategy. This explains why four refinement steps for the first eigenvalue produce $\num{150684}$ total DoFs, whereas five refinement steps for the third eigenvalue yield only $\num{134967}$ total DoFs.

\begin{table}[!t]
    \setlength{\tabcolsep}{2pt}
    \begin{center}
        \resizebox{\textwidth}{!}{ 
            \begin{tabular}{| c | c | c | c | c | c | c | c | c | c | c | c | c | c | c | c | c | c |}
                \hline
                {$\kappa_0$} &
                {$\Omega^h$} & 
                {$\textnormal{\#DoFs}$} & 
                {$\lambda_{1,h}$} & 
                {$\eta_1^2$} & 
                {$r(\eta_1^2)$} &
                {$\textnormal{\#DoFs}$} & 
                {$\lambda_{2,h}$} & 
                {$\eta_2^2$} & 
                {$r(\eta_2^2)$} &
                {$\textnormal{\#DoFs}$} & 
                {$\lambda_{3,h}$} & 
                {$\eta_3^2$} & 
                {$r(\eta_3^2)$} \\ [1pt]
                \hline 
                \hline
                \multirow{8}{*}{0} & 
                \multirow{8}{*}{\rotatebox{90}{L-shaped}}
                & 21618 & 171.59229 & 7.64e+00 & * & 21618 & 257.95567 & 1.82e+01 & * & 21618 & 284.78435 & 1.69e+01 & * \\
                & & 28032 & 174.17526 & 4.65e+00 & 3.82e+00 & 28896 & 264.14787 & 1.16e+01 & 3.11e+00 & 33693 & 290.21865 & 1.03e+01 & 2.25e+00 \\
                & & 38547 & 175.50164 & 2.68e+00 & 3.46e+00 & 40839 & 267.53076 & 6.89e+00 & 3.01e+00 & 47931 & 292.98084 & 5.95e+00 & 3.09e+00 \\
                & & 54630 & 176.26211 & 1.51e+00 & 3.30e+00 & 56997 & 269.49691 & 4.02e+00 & 3.24e+00 & 71622 & 294.89883 & 3.30e+00 & 2.94e+00 \\
                & & 88581 & 176.91846 & 8.45e-01 & 2.40e+00 & 93060 & 270.94045 & 2.22e+00 & 2.43e+00 & 124317 & 296.28080 & 1.69e+00 & 2.41e+00  \\
                & & 137970 & 177.34288 & 4.70e-01 & 2.64e+00 & 146022 & 271.77504 & 1.23e+00 & 2.62e+00 & 191100 & 296.93796 & 9.46e-01 & 2.71e+00 \\
                & & 219516 & 177.51347 & 2.82e-01 & 2.20e+00 & 219957 & 272.20495 & 7.29e-01 & 2.54e+00 & 296409 & 297.25558 & 5.86e-01 & 2.18e+00 \\
                & & 366699 & 177.67225 & 1.73e-01 & 1.90e+00 & 364734 & 272.55606 & 4.55e-01 & 1.87e+00 & 514893 & 297.61744 & 3.57e-01 & 1.79e+00 \\
                \hline
                \multicolumn{2}{|c|}{Avg.} & * &  * & * & 2.70e+00 & * &  * & * & 2.69e+00 & * &  * & * & 2.48e+00 \\
                \hline
                \multirow{5}{*}{{$\frac{2}{3}$}} & 
                \multirow{5}{*}{\rotatebox{90}{Fichera cube}}
                & 17604 & 87.64595 & 4.07e+01 & * & 17604 & 124.34059 & 1.34e+02 & * & 17604 & 151.83164 & 1.81e+02 & * \\
                & & 31940 & 93.63479 & 2.97e+01 & 1.58e+00 & 32252 & 145.80585 & 1.10e+02 & 1.00e+00 & 38840 & 160.73295 & 1.39e+02 & 1.01e+00 \\
                & & 55084 & 97.62630 & 1.96e+01 & 2.30e+00 & 59324 & 154.18331 & 6.93e+01 & 2.25e+00 & 54120 & 173.03266 & 8.01e+01 & 4.96e+00 \\
                & & 111972 & 100.76571 & 1.32e+01 & 1.67e+00 & 99188 & 163.33531 & 4.85e+01 & 2.09e+00 & 83320 & 182.78053 & 5.96e+01 & 2.06e+00 \\
                & & 245936 & 103.50021 & 9.08e+00 & 1.42e+00 & 220916 & 171.40649 & 2.98e+01 & 1.82e+00 & 163480 & 192.79947 & 4.16e+01 & 1.59e+00 \\
                \hline
                \multicolumn{2}{|c|}{Avg.} & * &  * & * & 1.74e+00 & * &  * & * & 1.79e+00 & * &  * & * & 2.41e+00 \\
                \hline
                \multirow{10}{*}{1} & 
                \multirow{10}{*}{\rotatebox{90}{Perfored circle}}
                & 3147 & 100.38706 & 2.79e-01 & * & 3147 & 119.70873 & 3.63e-01 & * & 3147 & 177.87490 & 5.59e-01 & * \\
                & & 4077 & 108.57435 & 1.89e-01 & 3.00e+00 & 3945 & 132.72194 & 2.41e-01 & 3.61e+00 & 3777 & 203.42516 & 4.03e-01 & 3.58e+00 \\
                & & 5547 & 117.77372 & 1.21e-01 & 2.89e+00 & 5715 & 146.84074 & 1.59e-01 & 2.27e+00 & 4800 & 229.51096 & 2.71e-01 & 3.33e+00 \\
                & & 8484 & 122.67488 & 7.70e-02 & 2.13e+00 & 9225 & 155.03149 & 1.00e-01 & 1.92e+00 & 7296 & 252.59657 & 2.19e-01 & 1.02e+00 \\
                & & 13881 & 125.24802 & 4.65e-02 & 2.05e+00 & 14367 & 158.41632 & 6.01e-02 & 2.30e+00 & 10548 & 259.30938 & 1.20e-01 & 3.26e+00 \\
                & & 20448 & 126.80516 & 2.82e-02 & 2.58e+00 & 22188 & 160.76700 & 3.67e-02 & 2.27e+00 & 15894 & 265.42276 & 7.00e-02 & 2.62e+00 \\
                & & 31878 & 128.01394 & 1.81e-02 & 2.00e+00 & 35772 & 162.61391 & 2.34e-02 & 1.89e+00 & 24162 & 270.19999 & 4.36e-02 & 2.26e+00 \\
                & & 51735 & 128.79951 & 1.17e-02 & 1.80e+00 & 57180 & 163.95736 & 1.52e-02 & 1.85e+00 & 40176 & 273.62860 & 2.74e-02 & 1.83e+00 \\
                & & 81276 & 129.40151 & 7.38e-03 & 2.04e+00 & 92091 & 164.81363 & 9.74e-03 & 1.86e+00 & 65019 & 276.10676 & 1.70e-02 & 1.98e+00 \\
                & & 131361 & 129.78774 & 4.65e-03 & 1.92e+00 & 146646 & 165.36872 & 6.13e-03 & 1.99e+00 & 104229 & 277.61611 & 1.06e-02 & 2.01e+00 \\
                \hline
                \multicolumn{2}{|c|}{Avg.} & * &  * & * & 2.27e+00 & * &  * & * & 2.22e+00 & * &  * & * & 2.43e+00 \\
                \hline
                \multirow{5}{*}{{$\frac{4}{3}$}} & 
                \multirow{5}{*}{\rotatebox{90}{Perfored cube}}
                & 18280 & 67.65476 & 9.26e+00 & * & 18280 & 82.97469 & 2.05e+01 & * & 18280 & 123.46614 & 1.07e+02 & * \\
                & & 29940 & 68.41144 & 5.87e+00 & 2.77e+00 & 34872 & 84.29090 & 1.35e+01 & 1.95e+00 & 35168 & 143.63909 & 1.03e+02 & 1.84e-01 \\
                & & 66604 & 69.31176 & 3.71e+00 & 1.72e+00 & 79500 & 86.32963 & 8.77e+00 & 1.56e+00 & 49996 & 151.15309 & 7.08e+01 & 3.16e+00 \\
                & & 150684 & 70.17242 & 2.26e+00 & 1.82e+00 & 163592 & 88.27421 & 5.57e+00 & 1.89e+00 & 82652 & 156.66908 & 5.82e+01 & 1.16e+00 \\
                & & * & * & * & * & * & * & * & * & 134976 & 164.69644 & 4.22e+01 & 1.97e+00 \\
                \hline
                \multicolumn{2}{|c|}{Avg.} & * &  * & * & 2.10e+00 & * &  * & * & 1.80e+00 & * &  * & * & 1.62e+00\\
                \hline
            \end{tabular}
        }
    \end{center}
    \vspace{-0.5cm}
    \caption{Example 2. Convergence history for a variety of 2D and 3D meshes, and different values of $\kappa_0$ of the first three eigenvalues $\lambda_{i,h}$. The values of the global total error estimator $\eta_i^2$ associated to the $i-$th eigenvalue ($i=1,2,3$) together with their rate of convergence are also displayed.}
    \label{tab:convergence2D3DExample2}
\end{table}

\begin{figure}[!t]
    \centering
    \subfigure[$\Pi^{\nabla^2}u_{1,h}.$]{\includegraphics[width=0.32\textwidth,trim={8.cm 3.25cm 2.5cm 3.75cm},clip]{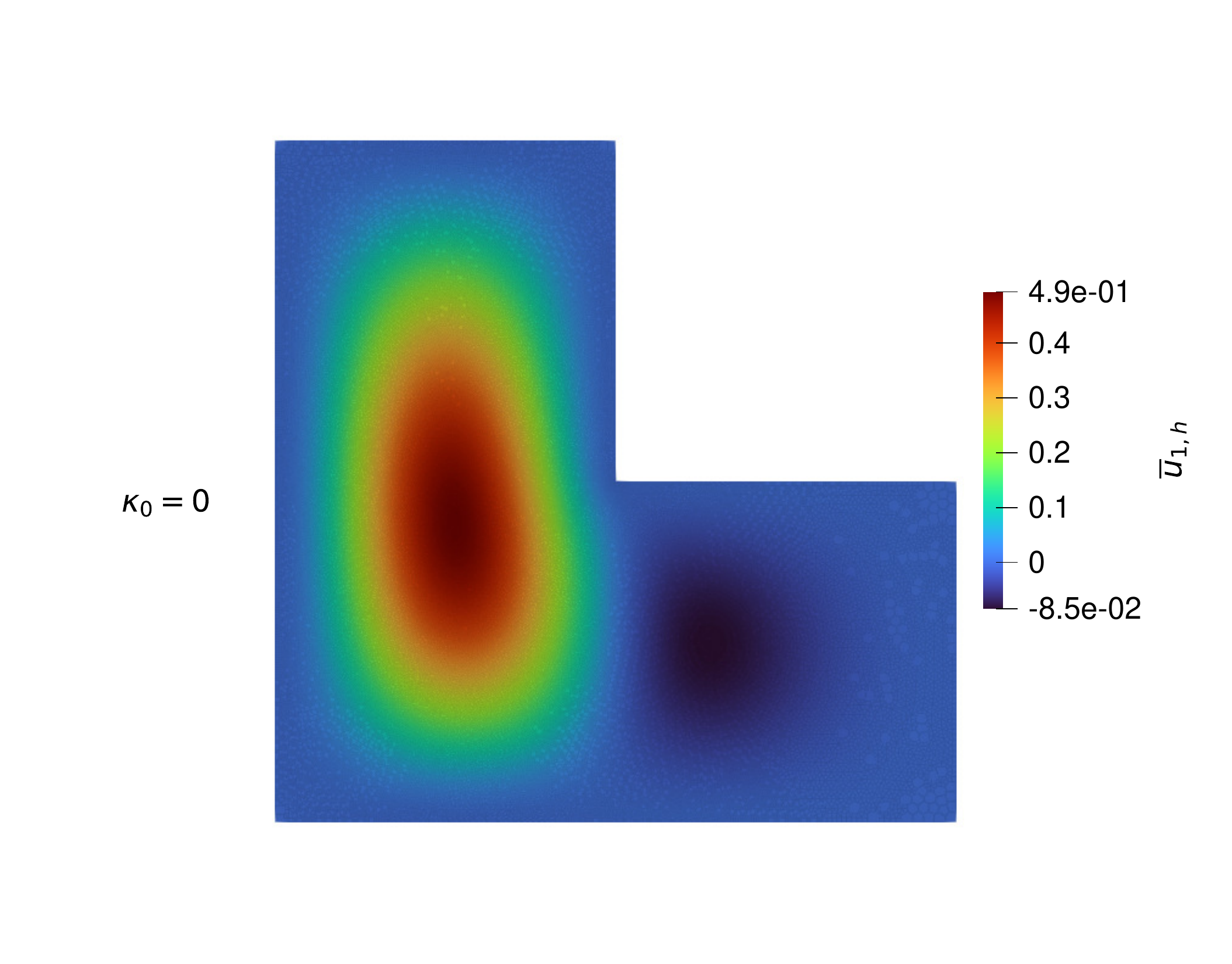}}
    \subfigure[$\Pi^{\nabla^2}u_{2,h}.$]{\includegraphics[width=0.32\textwidth,trim={8.cm 3.25cm 2.5cm 3.75cm},clip]{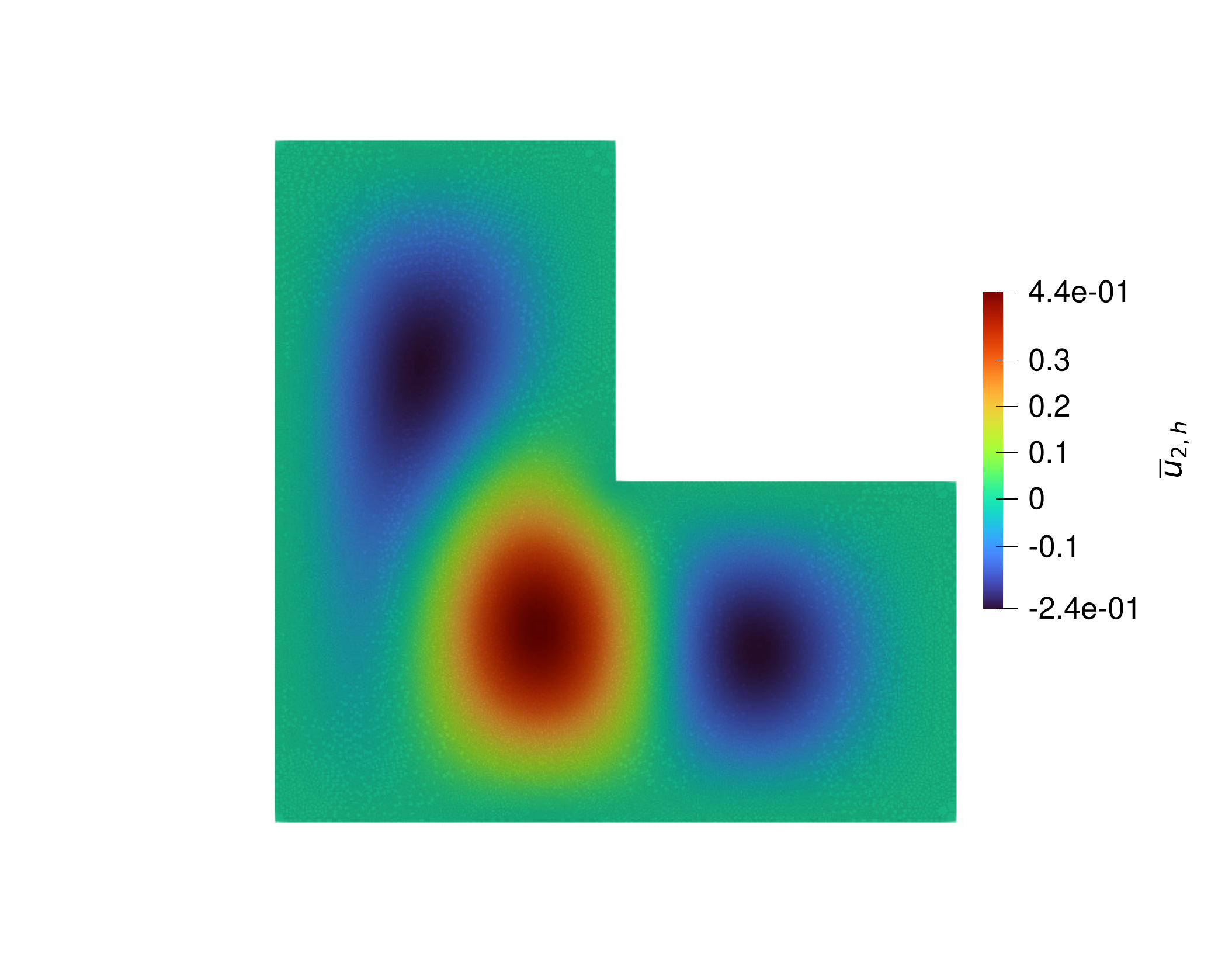}}
    \subfigure[$\Pi^{\nabla^2}u_{3,h}.$]{\includegraphics[width=0.32\textwidth,trim={8.cm 3.25cm 2.5cm 3.75cm},clip]{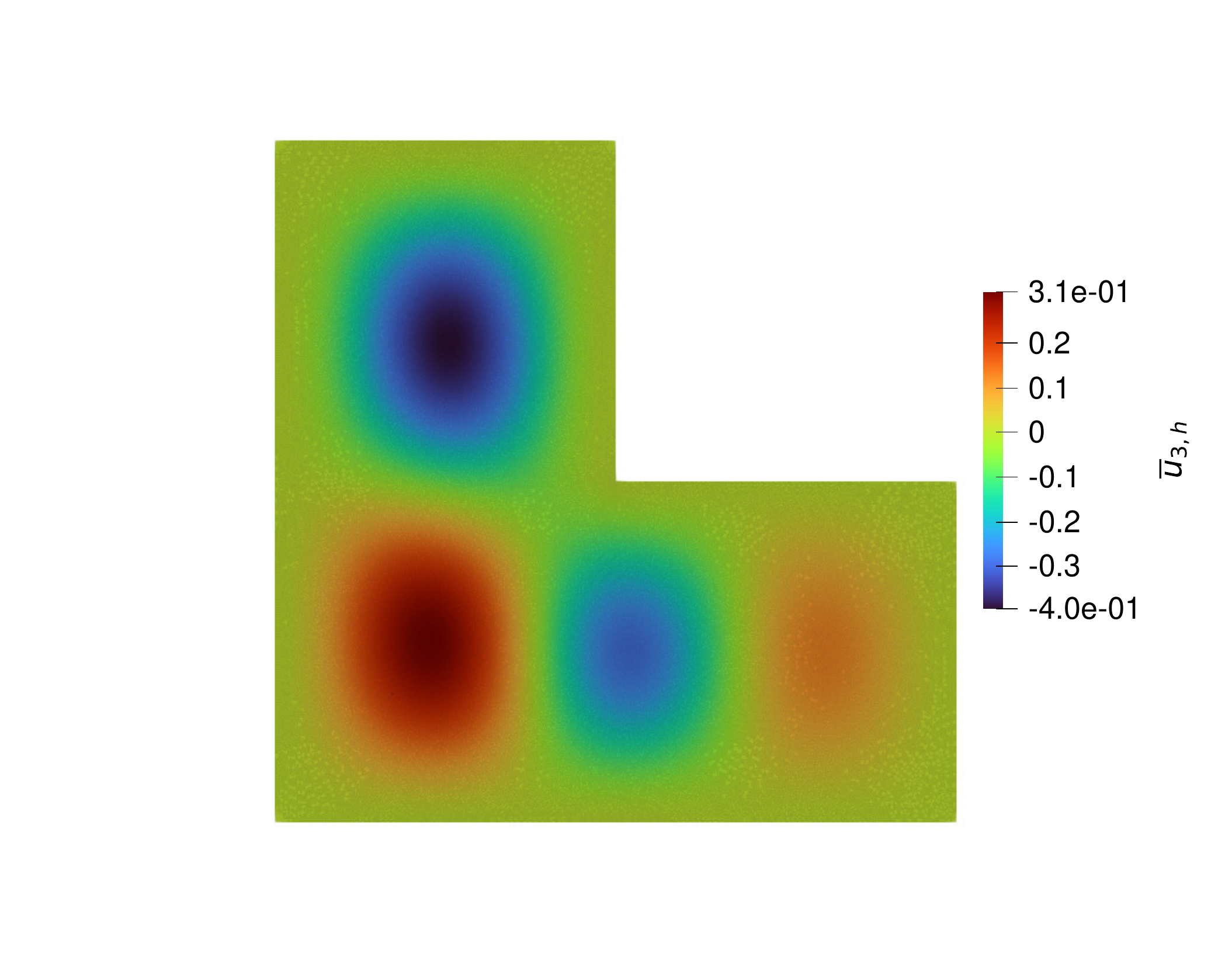}}
    \caption{Example 2. Snapshots of the polynomial projection of the first three eigenfunctions in the last refinement step for the L-shaped mesh with $\kappa_0=0$.}\label{fig:snapshotsLShape}
\end{figure}

\begin{figure}[!t]
    \centering
    \subfigure[$\Pi^{\nabla^2}u_{1,h}.$]{\includegraphics[width=0.32\textwidth,trim={8.cm 3.25cm 2.5cm 3.75cm},clip]{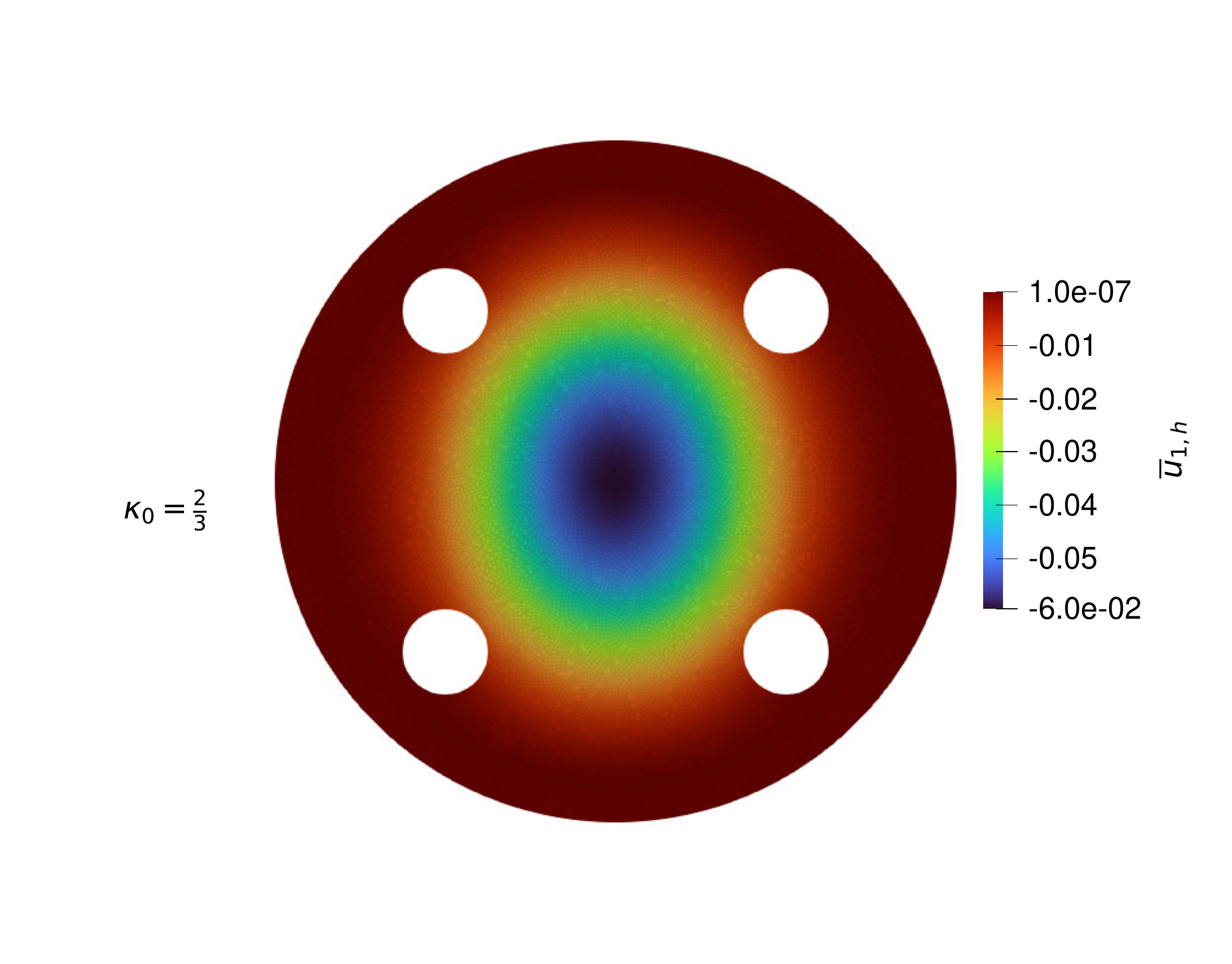}}
    \subfigure[$\Pi^{\nabla^2}u_{2,h}.$]{\includegraphics[width=0.32\textwidth,trim={8.cm 3.25cm 2.5cm 3.75cm},clip]{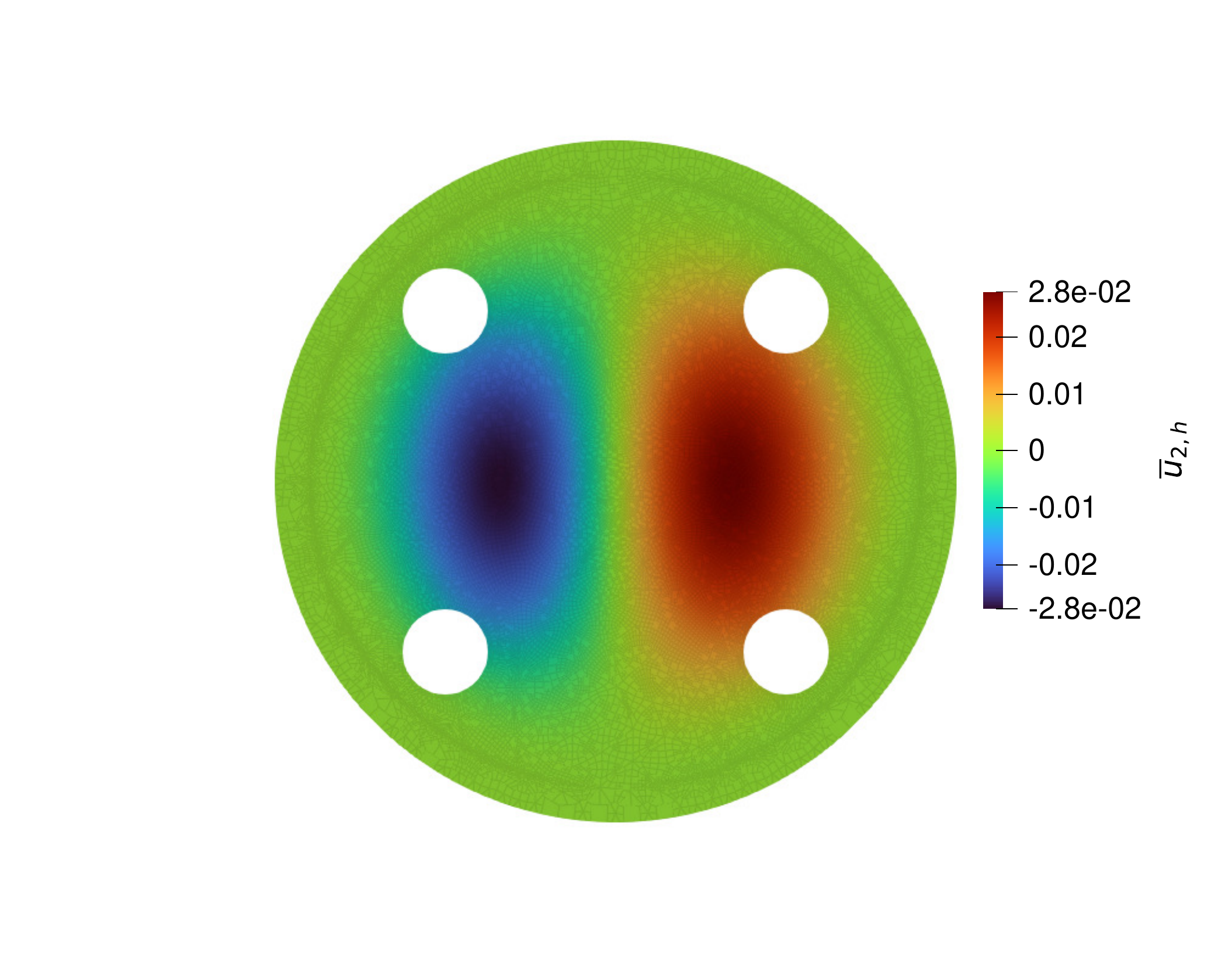}}
    \subfigure[$\Pi^{\nabla^2}u_{3,h}.$]{\includegraphics[width=0.32\textwidth,trim={8.cm 3.25cm 2.5cm 3.75cm},clip]{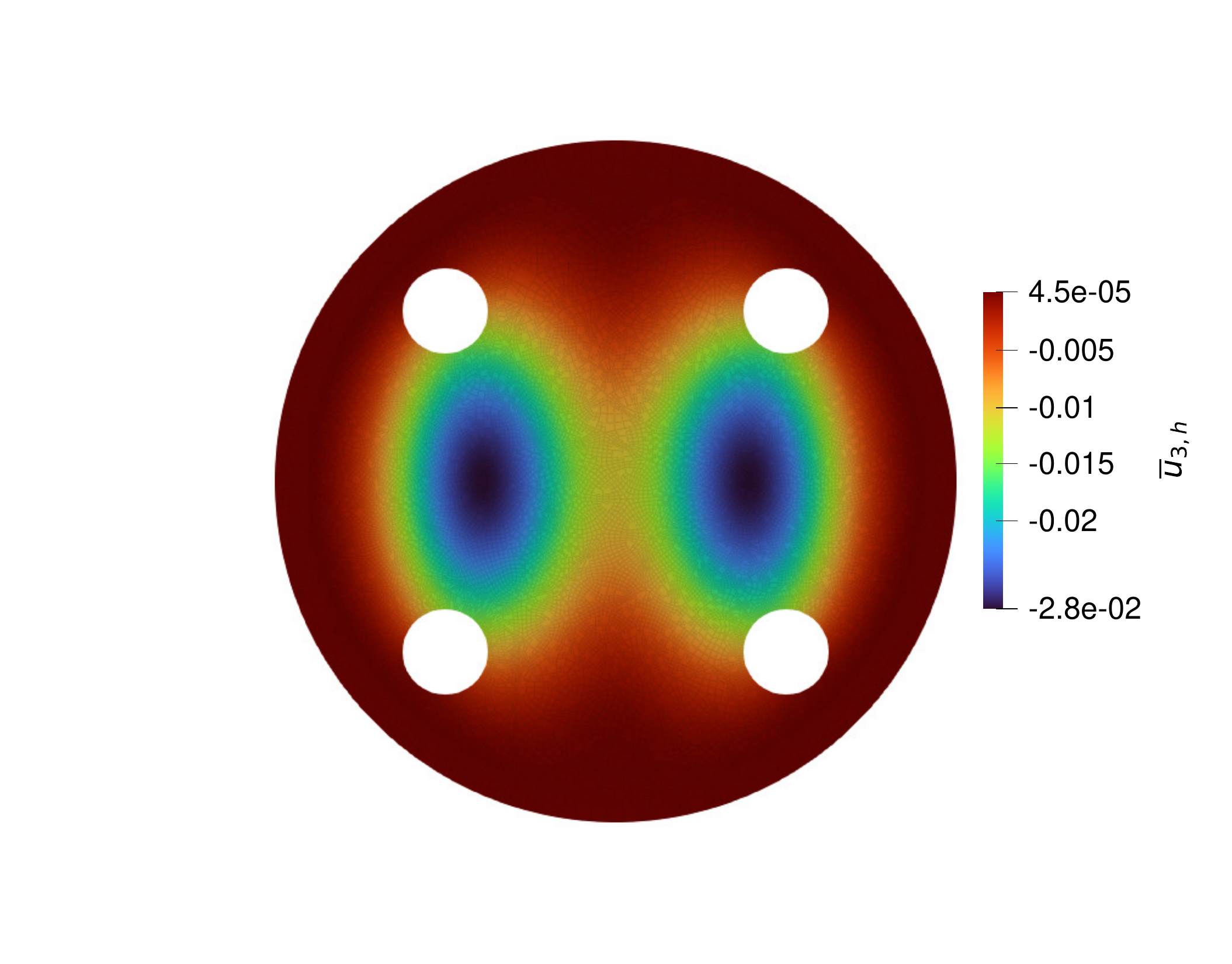}}
    \caption{Example 2. Snapshots of the polynomial projection of the first three eigenfunctions in the last refinement step for the perfored circle mesh with $\kappa_0=\frac{2}{3}$.}\label{fig:snapshotsPCircle}
\end{figure}

\begin{figure}[!t]
    \centering
    \subfigure[$\Pi^{\nabla^2}u_{1,h}.$]{\includegraphics[width=0.32\textwidth,trim={5.65cm 0.25cm 2.cm 4.25cm},clip]{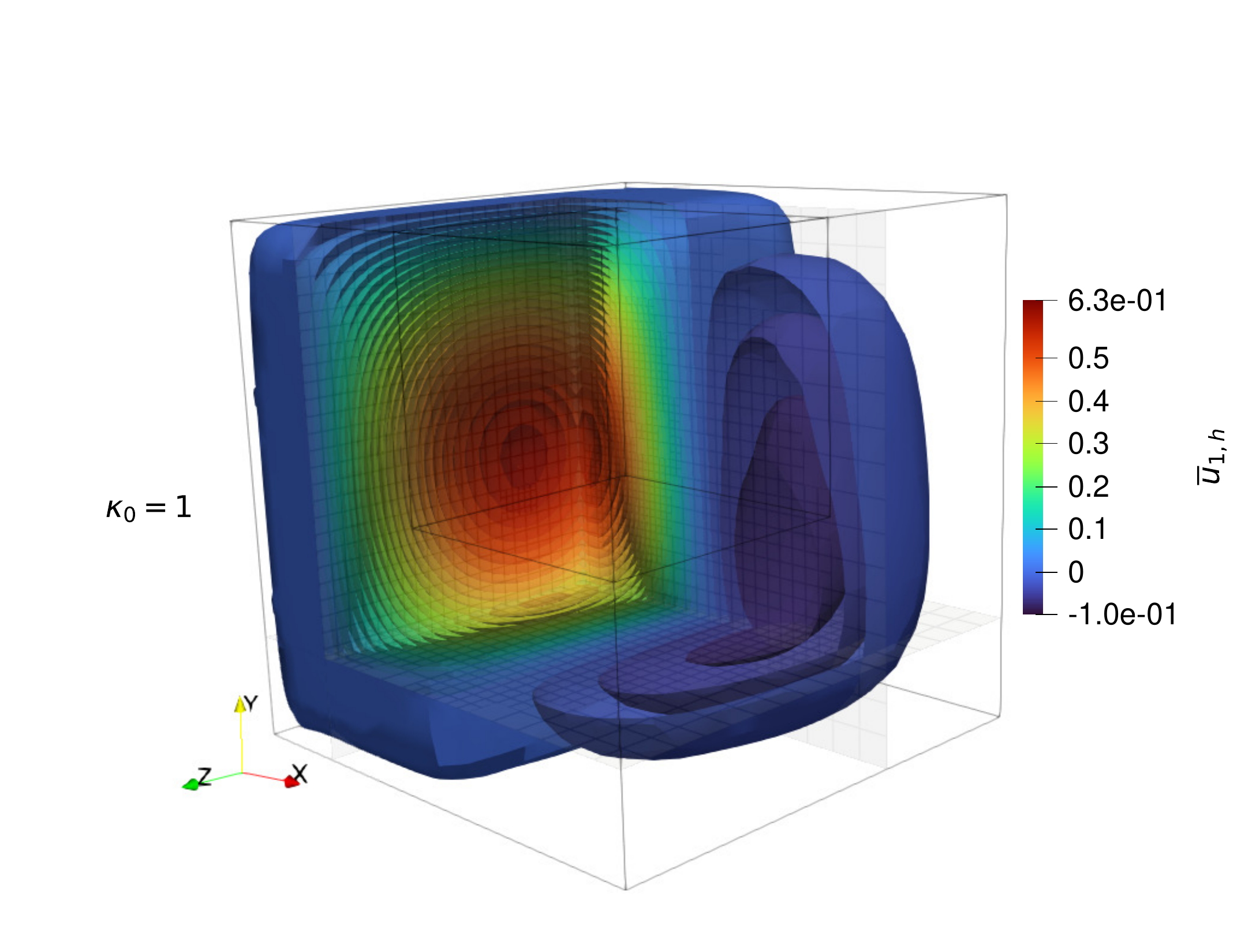}}
    \subfigure[$\Pi^{\nabla^2}u_{2,h}.$]{\includegraphics[width=0.32\textwidth,trim={5.65cm 0.25cm 2.cm 4.25cm},clip]{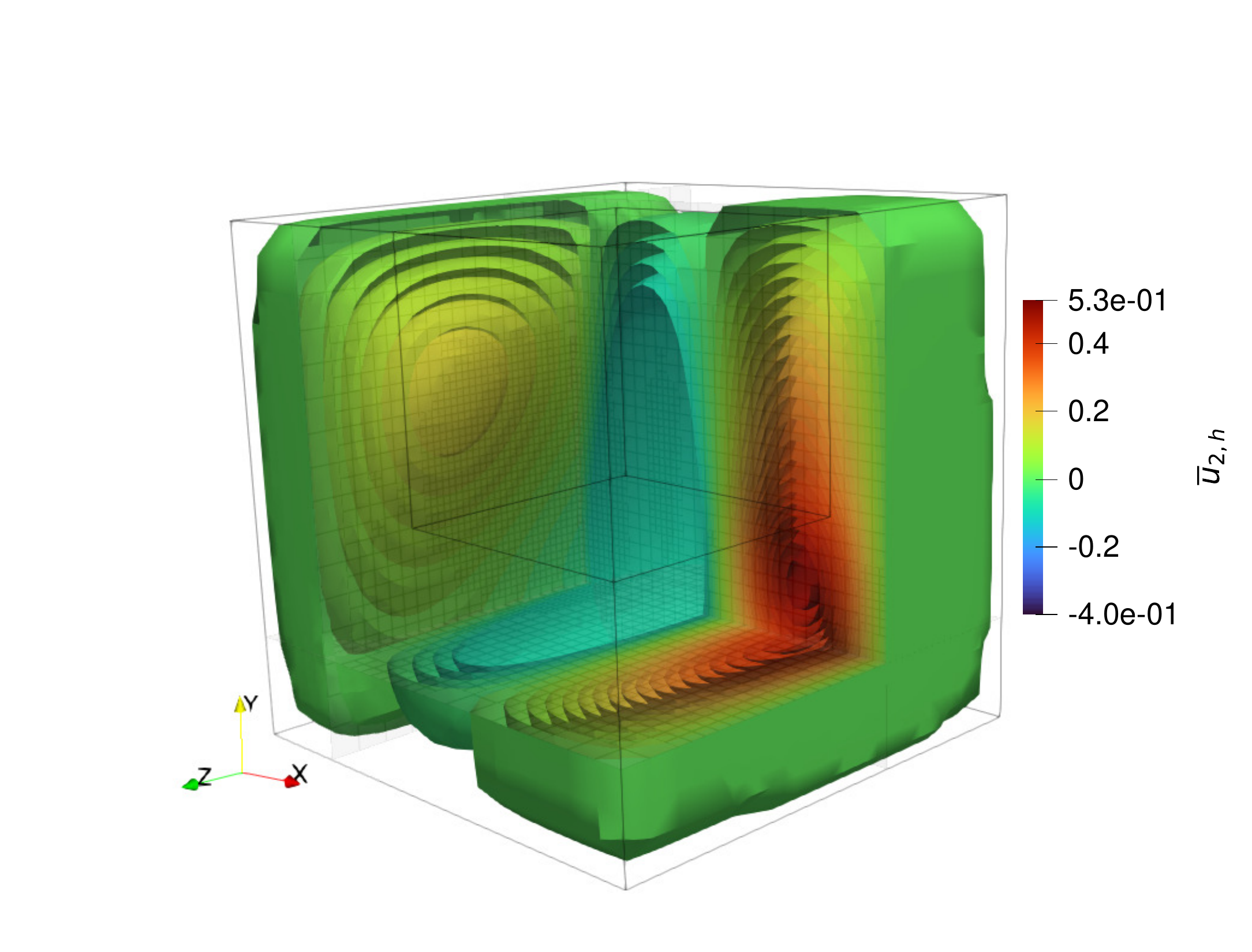}}
    \subfigure[$\Pi^{\nabla^2}u_{3,h}.$]{\includegraphics[width=0.32\textwidth,trim={5.65cm 0.25cm 2.cm 4.25cm},clip]{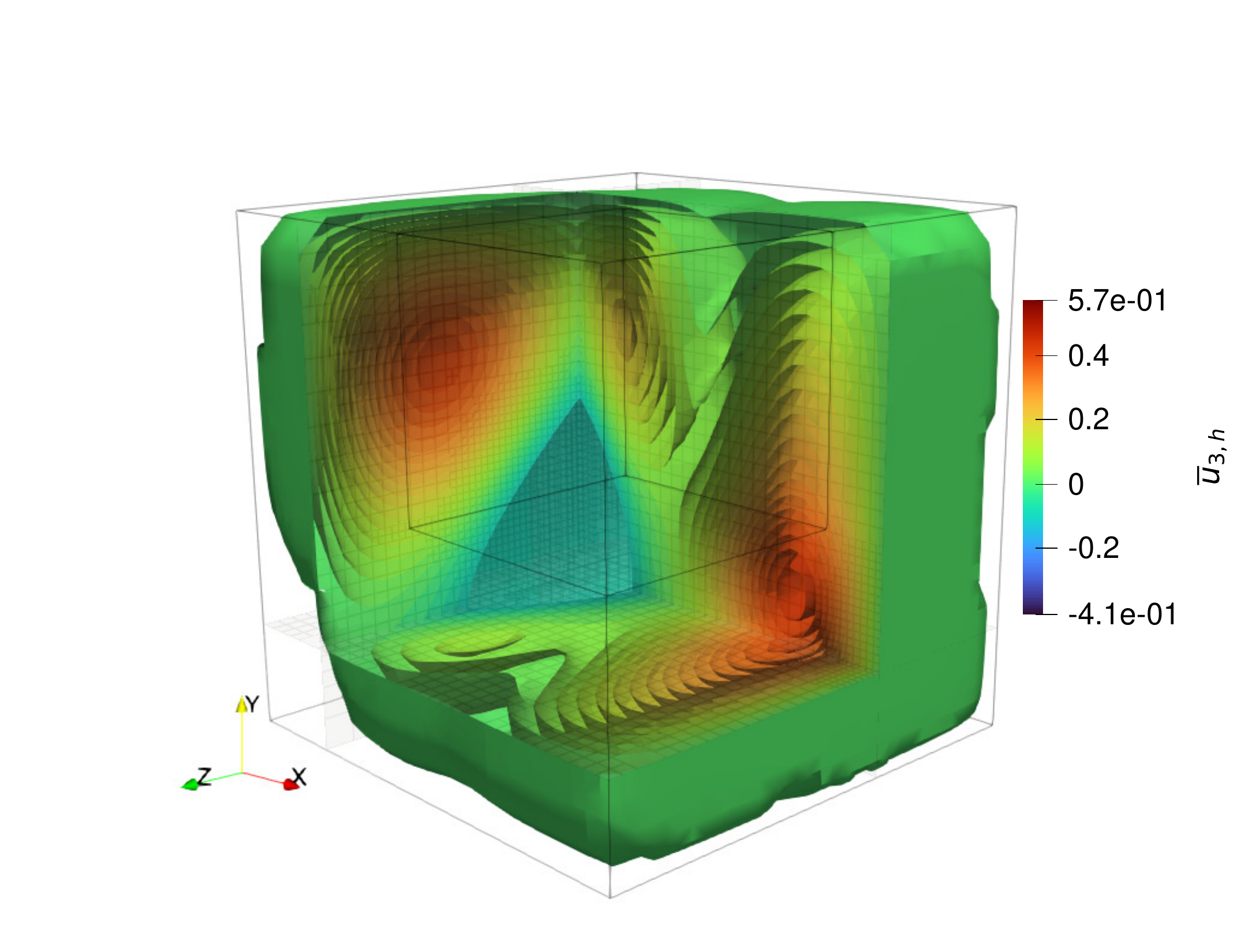}}
    \caption{Example 2. Snapshots of the polynomial projection of the first three eigenfunctions in the last refinement step for the perfored Fichera cube mesh with $\kappa_0=1$, the isosurfaces are computed from \texttt{paraview} starting from the mean value of the projected virtual function on each vertex.}\label{fig:snapshotsFichera}
\end{figure}

\begin{figure}[!t]
    \centering
    \subfigure[$\Pi^{\nabla^2}u_{1,h}.$]{\includegraphics[width=0.32\textwidth,trim={5.65cm 0.25cm 2.cm 2.75cm},clip]{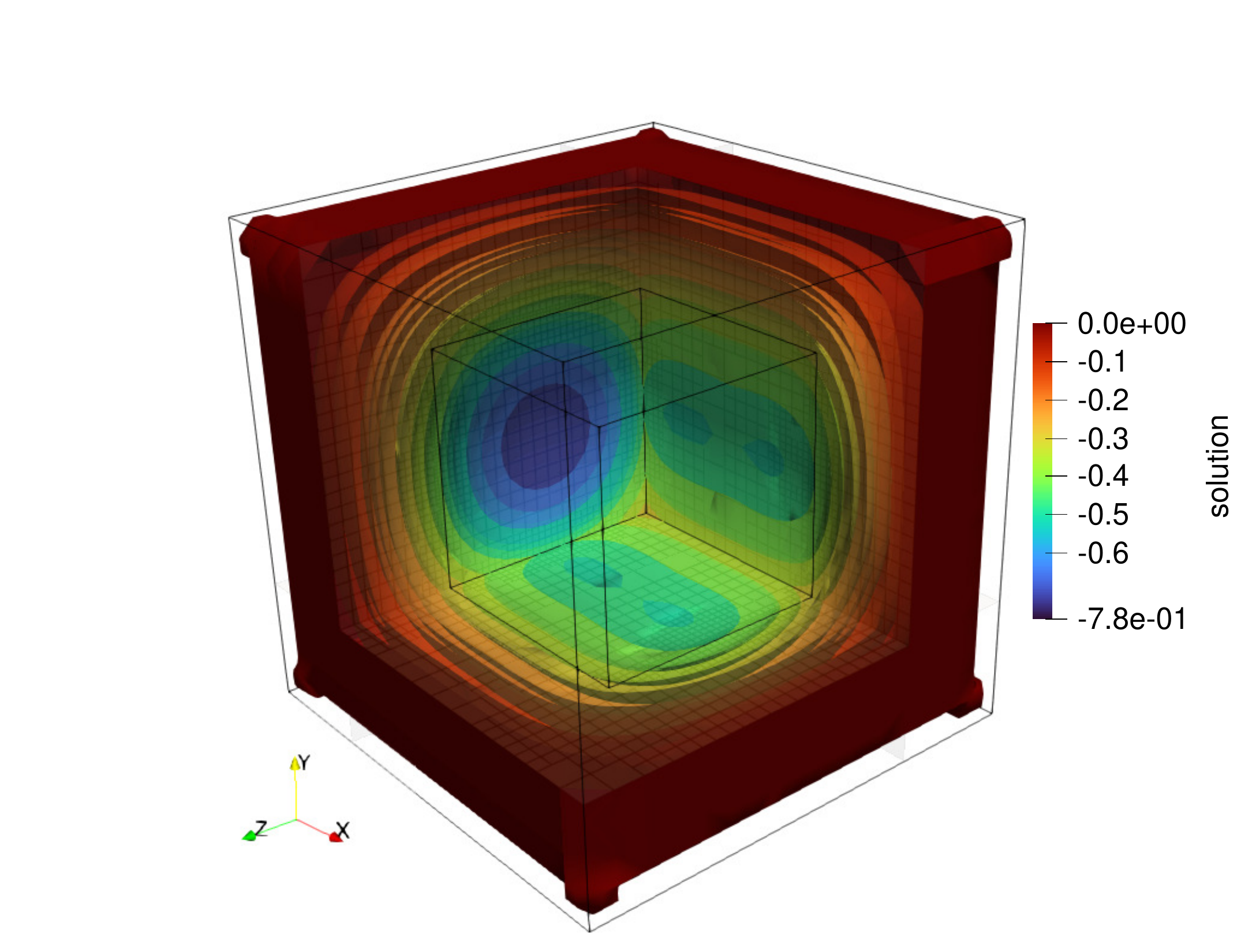}}
    \subfigure[$\Pi^{\nabla^2}u_{2,h}.$]{\includegraphics[width=0.32\textwidth,trim={5.65cm 0.25cm 2.cm 2.75cm},clip]{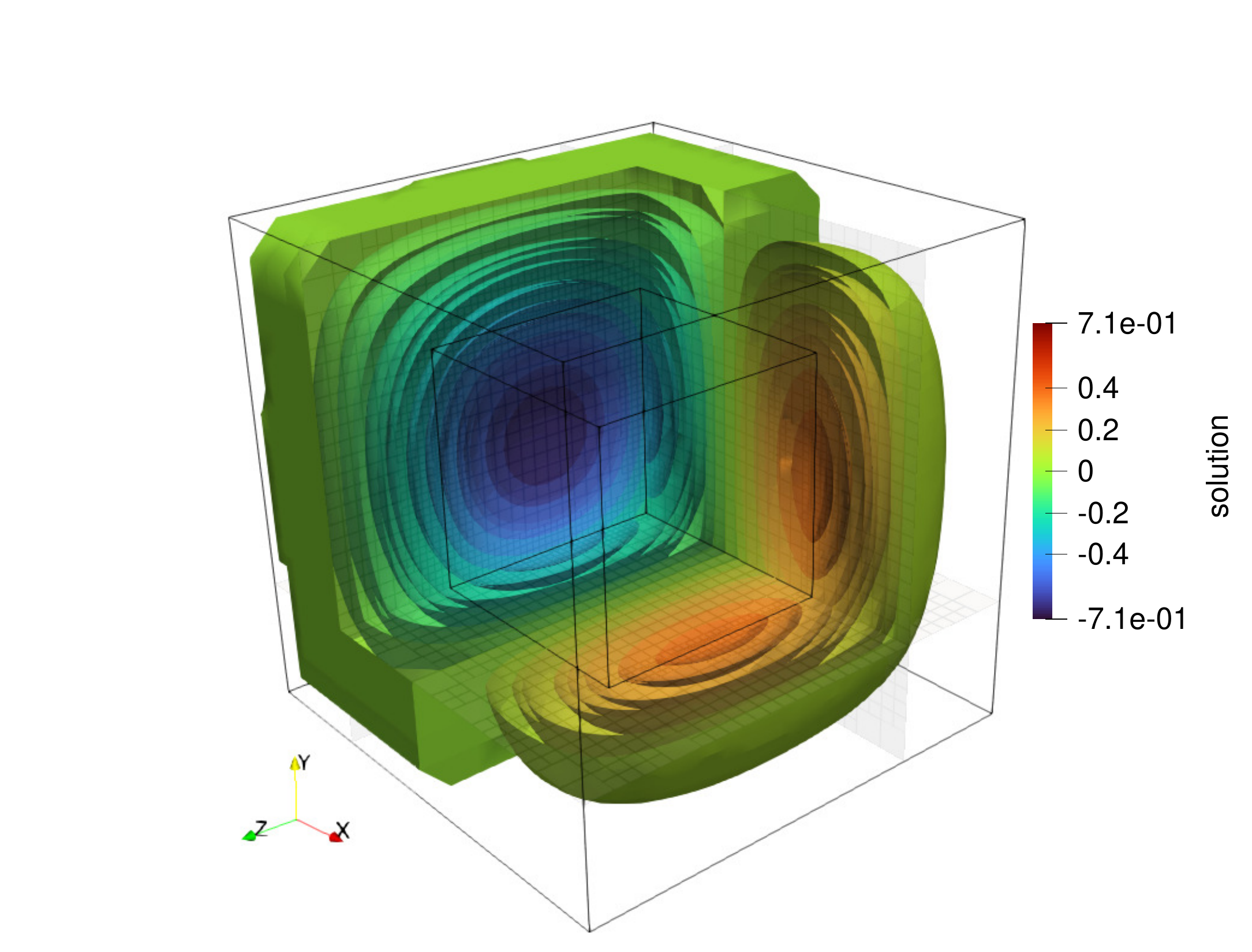}}
    \subfigure[$\Pi^{\nabla^2}u_{3,h}.$]{\includegraphics[width=0.32\textwidth,trim={5.65cm 0.25cm 2.cm 2.75cm},clip]{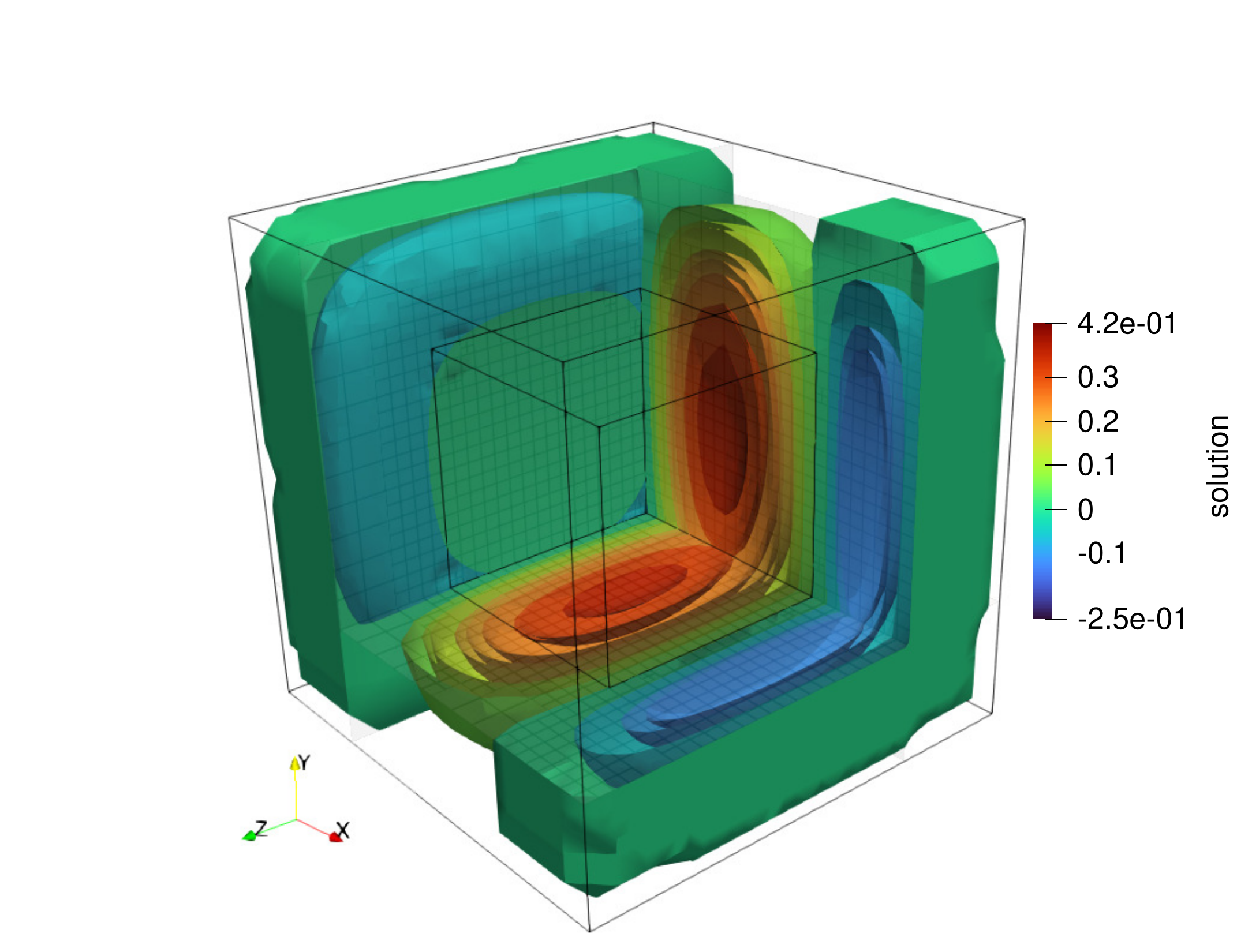}}
    \caption{Example 2. Snapshots of the polynomial projection of the first three eigenfunctions in the last refinement step for the perfored Perfored cube mesh with $\kappa_0=\frac{4}{3}$, the isosurfaces are computed from \texttt{paraview} starting from the mean value of the projected virtual function on each vertex.}\label{fig:snapshotsPCube}
\end{figure}

\subsection{Example 3. Buckling behaviour of turbine blades and the influence of internal air-cooling passages}
While Section~\ref{sec:example2} investigated perforated domains purely from a numerical analysis standpoint (cf. Figs~\ref{fig:CircleCircle}, \ref{fig:CubeCube}), this example illustrates their relevance in a engineering-type application. 

Modern jet turbine engines operate by compressing air to very high pressures and temperatures. When this compressed air mixes with fuel and ignites, the resulting hot, high-pressure gases expand through the turbine causing it to rotate and generate the mechanical forces needed to drive the engine. Naturally, the hot gases in the inner chambers of the engine raise the temperature of the turbine blades, which can be damaged when exposed to excessively high thermal loads. To overcome this issue internal air-cooling passages have been previously proposed (see eg. \cite{BANG2023123664,GAO20082139}). The air-cooling passages allow the cold air to flow through the engine blades which creates a film that protects the surface from high temperatures. 

In this simulation, we consider only the mechanical part of this phenomena with adimensional units, Figure~\ref{fig:domain_sketch} show the domain configuration, including the axial holes acting as the air-cooling passages. The domain is embedded in the unit square with boundary $\Gamma$ defined in both the external part of the blade turbine and the axial holes and shear load is imposed, i.e.,
$$\bkappa = \begin{pmatrix}
    0 & 1 \\
    1 & 0
\end{pmatrix}.$$
We explore the two type of boundary conditions presented in this paper (\textbf{SSP} and \textbf{CP}), with the material parameters (see \cite{TAKAGI2004348}) given by $L=1$, $t=10^{-3}$, $\nu=0.3$, and two different cases for the young modulus: room temperature case ($298\unit{\K}$) with a value of $E=156\unit{\GPa}$ and high temperature case ($1073\unit{\K}$) with a value of $E=103\unit{\GPa}$. These parameters correspond to Nickel-based superalloys, typically used in turbine blades and lead to $D=14.2857$ in the room temperature case and $D=9.4322$ in the high temperature case. We recall that the effect of the physical parameters relies on the computation of the first non-dimensional critical loading factor $\lambda_{1,h}$.

The critical load factor for each study are reported in Table~\ref{tab:loadingFactors} using the proposed scheme with both uniform and adaptive refinements. The adaptive routine provides very accurate solutions with respect to the uniformly refined, pointing out that approximately $94\%$ fewer degrees of freedom were require to achieve these results. Note that, at high temperature, the turbine blade section’s load-bearing capacity decreases by about $34\%$ compared to room temperature. Furthermore, the \textbf{CP} boundary condition consistently yields a higher critical load factor than the SSP case at both temperatures, in line with the expected physical behaviour. Finally, snapshots of the associated eigenfunctions are shown in Figure~\ref{fig:snapshotsTurbine}.

\begin{table}[!t]
    \setlength{\tabcolsep}{2pt}
    \begin{center}
        \resizebox{0.75\textwidth}{!}{ 
            \begin{tabular}{| c | c | c | c | r | c |}
                \hline
                {Temperature} &
                {Boundary condition} & 
                {Type of refinement} &
                {Refinement iterations} &
                {$\textnormal{\#DoFs}$} & 
                {$\lambda_{1,h}$} \\ [1pt]
                \hline 
                \hline
                \multirow{4}{*}{Room ($298\unit{\K}$)}
                & \multirow{2}{*}{\textbf{SSP}}
                    & uniform   & 5  & \num{1009299} & 5659 \\
                &   & adaptive  & 12 & \num{46947}   & 6004 \\
                \cline{2-6}
                & \multirow{2}{*}{\textbf{CP}}
                    & uniform   & 5  & \num{1009299} & 12632 \\
                &   & adaptive  & 12 & \num{73956}   & 12640 \\
                \hline
                \multirow{4}{*}{High ($1073\unit{\K}$)}
                & \multirow{2}{*}{\textbf{SSP}}
                    & uniform & 5 & \num{1009299} & 3736 \\
                &   & adaptive & 12 & \num{46947} & 3964 \\
                \cline{2-6}
                & \multirow{2}{*}{\textbf{CP}}
                    & uniform & 5 & \num{1009299} & 8340 \\
                &   & adaptive & 12 & \num{73956} & 8345 \\
                \hline
            \end{tabular}
        }
    \end{center}
    \vspace{-0.5cm}
    \caption{Example 3. Critical load factor for the turbine blade with air-cooling passages (axial holes) at different temperatures under (\textbf{SSP}) and (\textbf{CP}) boundary conditions, using uniform and adaptive refinement.}
    \label{tab:loadingFactors}
\end{table}

\begin{figure}[!t]
    \centering
    \includegraphics[width=0.75\textwidth,trim={2.25cm 0.cm 4.cm 0.35cm},clip]{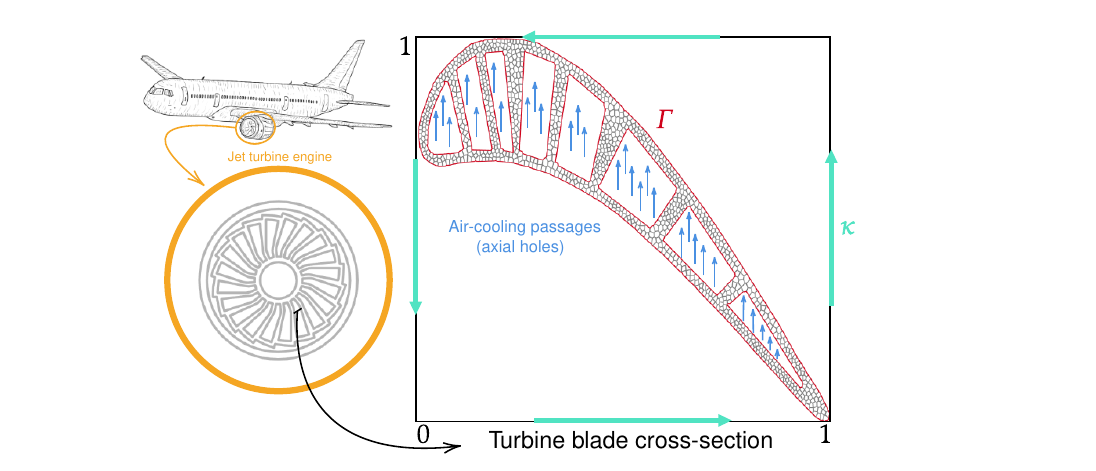}
    \caption{Example 3. Cross-section of a turbine blade with air-cooling passages (axial holes) embedded in the unit square, the effect of the shear load $\bkappa$ and the boundary $\Gamma$ are highlighted.}\label{fig:domain_sketch}
\end{figure}

\begin{figure}[!t]
    \centering
    \subfigure[\textbf{SSP}.]{\includegraphics[width=0.49\textwidth,trim={1.cm 0.25cm 1.5cm 1.75cm},clip]{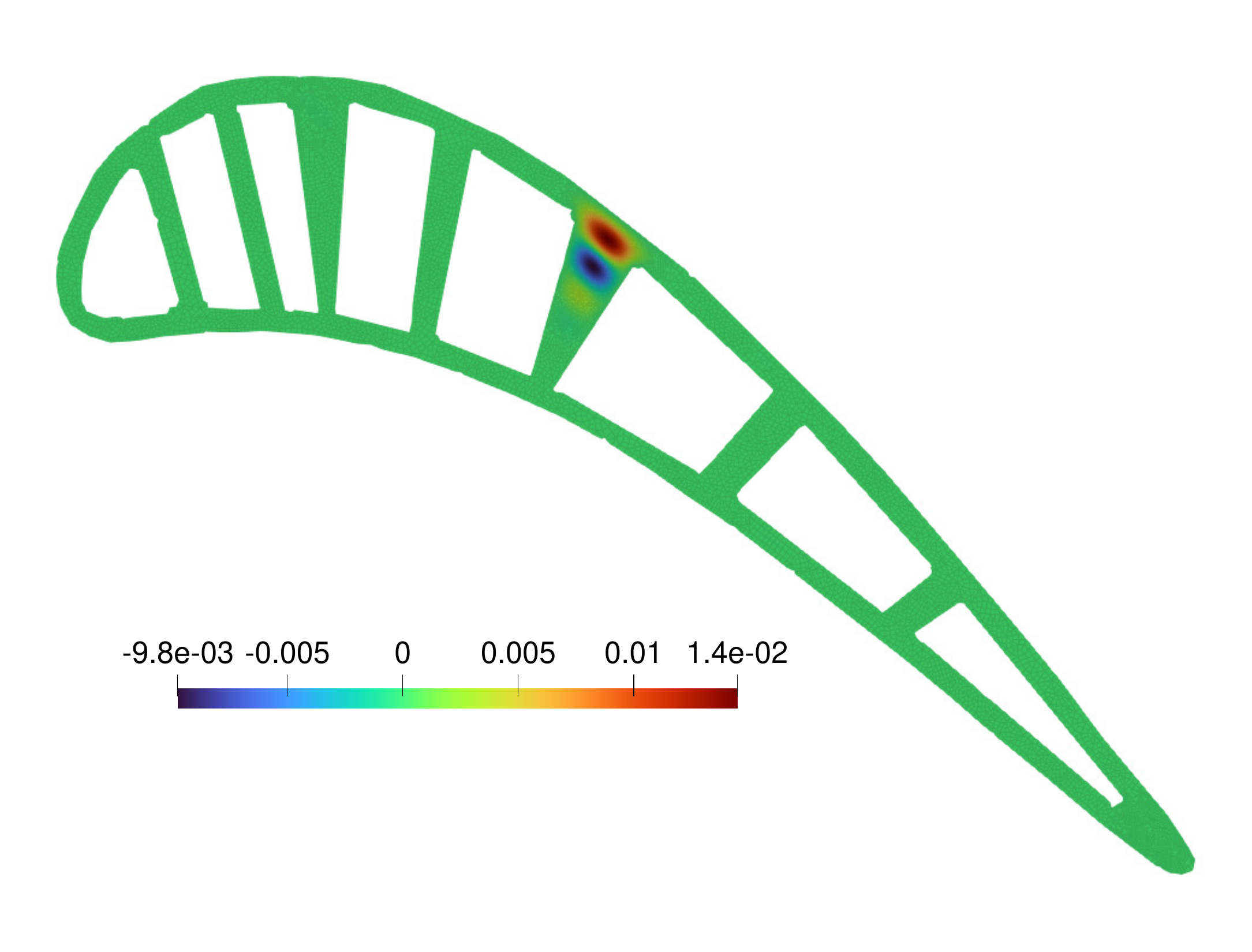}}
    \subfigure[\textbf{CP}.]{\includegraphics[width=0.49\textwidth,trim={1.cm 0.25cm 1.5cm 1.75cm},clip]{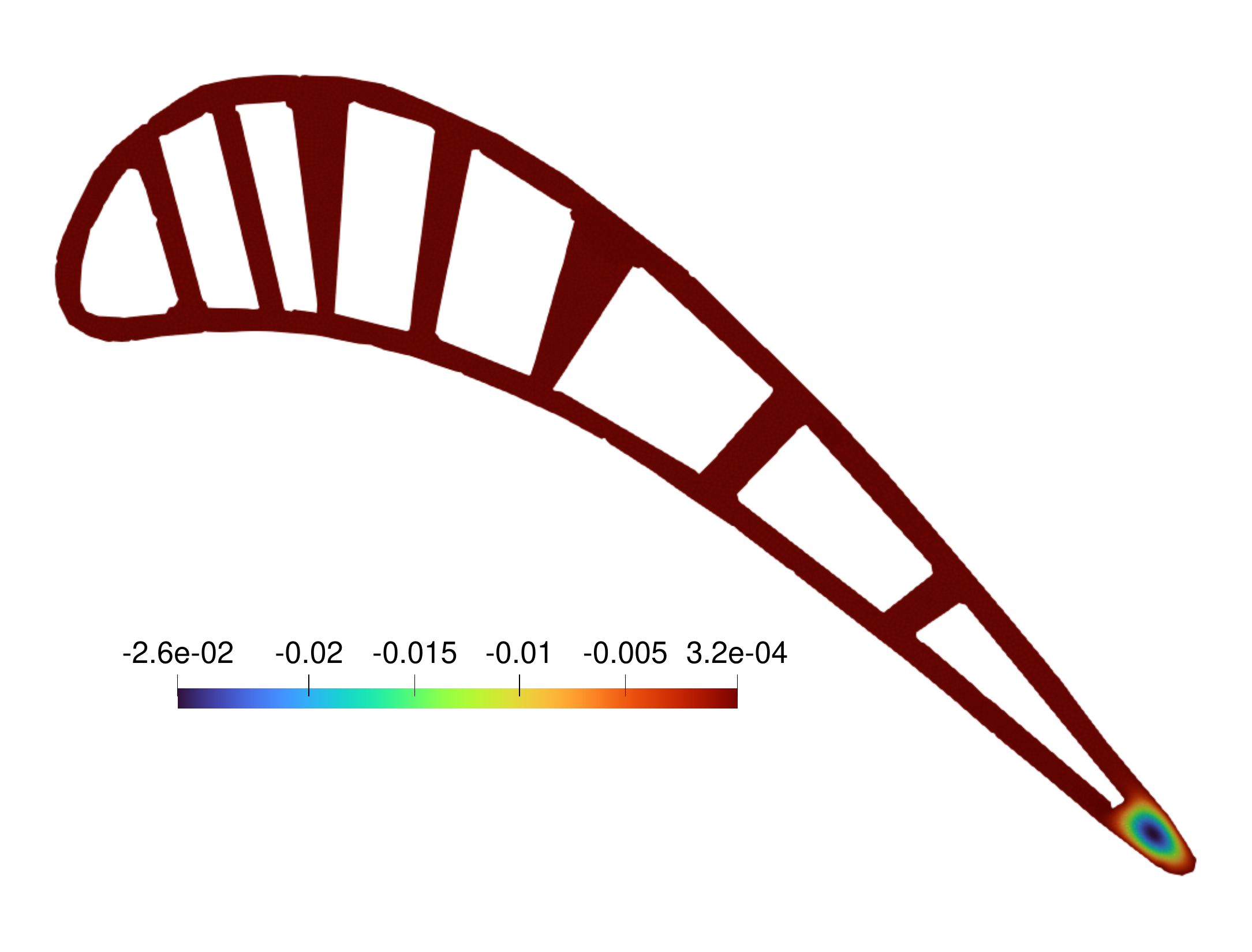}}
    \caption{Example 2. Snapshots of the polynomial projection for the first eigenfunctions ($\Pi^{\nabla^2}u_{1,h}$) in the last adaptive refinement step for the turbine blade with air-cooling passages (axial holes).}\label{fig:snapshotsTurbine}
\end{figure}

\section*{Acknowledgements}
Producto derivado del proyecto INV-CIAS-4321 financiado por la Universidad Militar Nueva Granada - Vigencia (2026)

FD was partially supported by the European Research Council project NEMESIS (Grant No. 101115663). AER has been partially supported by the Australian Research Council through the \textsc{Future Fellowship} grant FT220100496. IV was partially financially supported by Vicerrectoría de la Investigación de la Universidad Militar Nueva Granada (grant INV-CIAS-4321). 
We kindly thank Prof. Jesus Vellojin for the support provided in the SLEPC solver implementation.

\bibliographystyle{abbrv} \bibliography{main}

@book {BS-2008,
	AUTHOR = {Brenner, Susanne C. and Scott, L. Ridgway},
	TITLE = {The Mathematical Theory of Finite Element Methods},
	PUBLISHER = {Springer, New York},
	YEAR = {2008},
    doi = {https://doi.org/10.1007/978-0-387-75934-0},
    note = {DOI: \url{https://doi.org/10.1007/978-0-387-75934-0}}
}

@book{G,
author = {Grisvard, Pierre},
title = {Elliptic Problems in Nonsmooth Domains},
publisher = {SIAM},
year = {2011},
doi = {10.1137/1.9781611972030},
note = {DOI: \url{https://doi.org/10.1137/1.9781611972030}},
address = {Philadelphia, PA}
}

@book{ciarlet,
  author    = {Ciarlet, Philippe G.},
  title     = {The Finite Element Method for Elliptic Problems},
  series    = {Classics in Applied Mathematics},
  volume    = {40},
  publisher = {Society for Industrial and Applied Mathematics (SIAM)},
  address   = {Philadelphia, PA},
  year      = {2002},
  note = {DOI: \url{https://doi.org/10.1137/1.9780898719208}},
  doi       = {10.1137/1.9780898719208},
  isbn      = {9780898719208}
}

@book{adams2003sobolev,
  author    = {Robert A. Adams and John J. F. Fournier},
  title     = {Sobolev Spaces},
  edition   = {2nd},
  publisher = {Elsevier/Academic Press},
  address   = {Amsterdam, Netherlands},
  year      = {2003},
  isbn      = {978-0-12-044143-3},
  mrnumber  = {2424078}

}

@InProceedings{slepc,
author="Hern{\'a}ndez, Vicente
and Rom{\'a}n, Jose E.
and Vidal, Vicente",
editor="Palma, Jos{\'e} M. L. M.
and Sousa, A. Augusto
and Dongarra, Jack
and Hern{\'a}ndez, Vicente",
title="{SLEP}c: Scalable Library for Eigenvalue Problem Computations",
booktitle="High Performance Computing for Computational Science --- VECPAR 2002",
year="2003",
publisher="Springer Berlin Heidelberg",
address="Berlin, Heidelberg",
pages="377--391",
isbn="978-3-540-36569-3",
doi="https://doi.org/10.1145/1089014.1089019",
note = "DOI: \url{https://doi.org/10.1145/1089014.1089019}"
}

@article{CARSTENSEN2024,
title = {Rate-optimal higher-order adaptive conforming FEM for biharmonic eigenvalue problems on polygonal domains},
journal = {Comput. Methods Appl. Mech. Eng.},
fjournal = {Computer Methods in Applied Mechanics and Engineering},
volume = {425},
pages = {116931},
year = {2024},
issn = {0045-7825},
doi = {https://doi.org/10.1016/j.cma.2024.116931},
note = {DOI: \url{https://doi.org/10.1016/j.cma.2024.116931}},
author = {Carsten Carstensen and Benedikt Gräßle}
}

@article {dassi2025posteriorierrorestimatesc1,
    AUTHOR = {Dassi, Franco and Rubiano, Andr\'es E. and Vel\'asquez,
              Iv\'an},
     TITLE = {A posteriori error estimates for a {$C^1$} virtual element
              method applied to the thin plate vibration problem},
   JOURNAL = {Adv. Comput. Math.},
  FJOURNAL = {Advances in Computational Mathematics},
    VOLUME = {52},
      YEAR = {2026},
    NUMBER = {2},
     PAGES = {Paper No. 17},
      ISSN = {1019-7168,1572-9044},
   MRCLASS = {65N15 (65N25 65N30 74K20)},
  MRNUMBER = {5035573},
       DOI = {10.1007/s10444-026-10288-6},
       note = {DOI: \url{https://doi-org.ezproxyucor.unicordoba.edu.co/10.1007/s10444-026-10288-6}}
}

@article{Li2018,
author={Li, Hao
and Yang, Yidu},
title={Adaptive Morley element algorithms for the biharmonic eigenvalue problem},
journal={J. Inequal. Appl.},
fjournal={Journal of Inequalities and Applications},
year={2018},
month={Mar},
day={06},
volume={2018},
number={1},
pages={55},
issn={1029-242X},
doi={10.1186/s13660-018-1643-9},
note={DOI: \url{https://doi.org/10.1186/s13660-018-1643-9}}
}

@Article{feng2023,
  author={Feng, Jinhua and Wang, Shixi and Bi, Hai and Yang, Yidu},
  title={{An hp-mixed discontinuous Galerkin method for the biharmonic eigenvalue problem}},
  journal = {Appl. Math. Comput.},
  fjournal = {Applied Mathematics and Computation},
  year=2023,
  volume={450},
  number={C},
  pages={},
  month={},
  doi={10.1016/j.amc.2023.127969},
  note={DOI: \url{https://doi.org/10.1016/j.amc.2023.127969}}
}

@misc{yu2021implementationpolygonalmeshrefinement,
      title={Implementation of Polygonal Mesh Refinement in {MATLAB}}, 
      author={Yue Yu},
      year={2021},
      eprint={2101.03456},
      archivePrefix={arXiv},
      primaryClass={math.NA},
      Note={available at \url{https://arxiv.org/abs/2101.03456}}, 
}

@article{ABSV2016,
author = {Antonietti, P. F. and da Veiga, L. Beira͂o and Scacchi, S. and Verani, M.},
title = {A {C$^1$} Virtual Element Method for the Cahn--Hilliard Equation with Polygonal Meshes},
journal = {SIAM J. Numer. Anal.},
fjournal = {SIAM Journal on Numerical Analysis},
volume = {54},
number = {1},
pages = {34-56},
year = {2016},
doi = {10.1137/15M1008117},
note = {DOI: \url{https://doi.org/10.1137/15M1008117}}
}

@article {CWCC2017,
     AUTHOR = {Cao, Junying and Wang, Ziqiang and Cao, Waixiang and Chen,
               Lizhen},
      TITLE = {A mixed {L}egendre-{G}alerkin spectral method for the buckling
               problem of simply supported {K}irchhoff plates},
    JOURNAL = {Bound. Value Probl.},
   FJOURNAL = {Boundary Value Problems},
     VOLUME = {34},   
       YEAR = {2017},
      PAGES = {1--12},
      doi = {https://doi.org/10.1186/s13661-017-0767-z},
      note = {DOI: \url{https://doi.org/10.1186/s13661-017-0767-z}}
      }

@article{CKD,
  title={Discrete singular convolution approach for
  buckling analysis of rectangular Kirchhoff plates subjected to compressive loads on two-opposite edges},
  author={Civalek, {\"O}mer and Korkmaz, Arma{\u{g}}an and Demir, {\c{C}}i{\u{g}}dem},
  journal={Adv. Eng. Softw.},
  fjournal={Advances in Engineering Software},
  volume={41},
  number={4},
  pages={557--560},
  year={2010},
  doi = {https://doi.org/10.1016/j.advengsoft.2009.11.002},
  note = {DOI: \url{https://doi.org/10.1016/j.advengsoft.2009.11.002}}
}

@article {HLM2015,
    AUTHOR = {Hansbo, Peter and Larson, Mats G.},
     TITLE = {A posteriori error estimates for continuous/discontinuous
              {G}alerkin approximations of the {K}irchhoff-{L}ove buckling
              problem},
   JOURNAL = {Comput. Mech.},
  FJOURNAL = {Computational Mechanics},
    VOLUME = {56},
      YEAR = {2015},
    NUMBER = {5},
     PAGES = {815--827},
     doi = {https://doi.org/10.1007/s00466-015-1204-8},
     note = {DOI: \url{https://doi.org/10.1007/s00466-015-1204-8}}
     }

@article {I2,
      AUTHOR = {Ishihara, Kazuo},
       TITLE = {On the mixed finite element approximation for the buckling of
                plates},
     JOURNAL = {Numer. Math.},
    FJOURNAL = {Numerische Mathematik},
      VOLUME = {33},
        YEAR = {1979},
      NUMBER = {2},
       PAGES = {195--210},
       doi = {https://doi.org/10.1007/BF01399554},
       note = {DOI: \url{https://doi.org/10.1007/BF01399554}}
       }

@article {MM2015,
    AUTHOR = {Millar, Felipe and Mora, David},
     TITLE = {A finite element method for the buckling problem of simply
              supported {K}irchhoff plates},
   JOURNAL = {J. Comput. Appl. Math.},
  FJOURNAL = {Journal of Computational and Applied Mathematics},
    VOLUME = {286},
      YEAR = {2015},
     PAGES = {68--78},
     doi = {https://doi.org/10.1016/j.cam.2015.02.018},
     note = {DOI: \url{https://doi.org/10.1016/j.cam.2015.02.018}}
     }

@article {Ra,
     AUTHOR = {Rannacher, Rolf},
      TITLE = {Nonconforming finite element methods for eigenvalue problems
               in linear plate theory},
    JOURNAL = {Numer. Math.},
   FJOURNAL = {Numerische Mathematik},
     VOLUME = {33},
       YEAR = {1979},
     NUMBER = {1},
      PAGES = {23--42},
      doi = {https://doi.org/10.1007/BF01396493},
      note = {DOI: \url{https://doi.org/10.1007/BF01396493}}
      }

@article{MV2,
AUTHOR = {Mora, David and Vel\'{a}squez, Iv\'{a}n},
     TITLE = {Virtual element for the buckling problem of
              {K}irchhoff--{L}ove plates},
   JOURNAL = {Comput. Methods Appl. Mech. Eng.},
  FJOURNAL = {Computer Methods in Applied Mechanics and Engineering},
    VOLUME = {360},
      YEAR = {2020},
     PAGES = {112687},
     doi={https://doi.org/10.1016/j.cma.2019.112687},
     note={DOI: \url{https://doi.org/10.1016/j.cma.2019.112687}}
}

@article {Brenner_VK2017,
         AUTHOR = {Brenner, Susanne C. and Neilan, Michael and Reiser, Armin and
                   Sung, Li-Yeng},
          TITLE = {A {$C^0$} interior penalty method for a von {K}\'{a}rm\'{a}n plate},
        JOURNAL = {Numer. Math.},
       FJOURNAL = {Numerische Mathematik},
         VOLUME = {135},
           YEAR = {2017},
         NUMBER = {3},
          PAGES = {803--832},
          doi = {https://doi.org/10.1007/s00211-016-0817-y},
          note = {DOI: \url{https://doi.org/10.1007/s00211-016-0817-y}}
          }

@article {BDR2019C1Polyhedral,
	AUTHOR = {Beir\~{a}o da Veiga, L. and Dassi, F. and Russo, A.},
	TITLE = {A {C$^1$} Virtual Element Method on polyhedral meshes},
	JOURNAL = {Comput. Math. Appl.},
FJOURNAL = {Computers \& Mathematics with Applications. An International
Journal},
volume={79},
	pages={1936--1955},
	NUMBER = {7},
	year={2020},
    doi={https://doi.org/10.1016/j.camwa.2019.06.019},
    note={DOI: \url{https://doi.org/10.1016/j.camwa.2019.06.019}}
}

@book{gazzola,
  title={Polyharmonic boundary value problems: positivity preserving and nonlinear higher order elliptic equations in bounded domains},
  author={Gazzola, Filippo and Grunau, Hans-Christoph and Sweers, Guido},
  year={2010},
  publisher={{S}pringer {S}cience \& {B}usiness {M}edia},
  address   = {Berlin, Heidelberg},
  edition   = {1st},
  doi = {https://doi.org/10.1007/978-3-642-12245-3},
  note = {DOI: \url{https://doi.org/10.1007/978-3-642-12245-3}}
}

@ARTICLE{p4est,
  author = {Carsten Burstedde and Lucas C. Wilcox and Omar Ghattas},
  title = {{\texttt{p4est}}: Scalable Algorithms for Parallel Adaptive Mesh
           Refinement on Forests of Octrees},
  fjournal = {SIAM Journal on Scientific Computing},
journal = {SIAM J. Sci. Comput.},
  volume = {33},
  number = {3},
  pages = {1103-1133},
  year = {2011},
  doi = {https://doi.org/10.1137/100791634},
  note={DOI: \url{https://doi.org/10.1137/100791634}}
}

@article{gridap,
  year = {2020},
  publisher = {The Open Journal},
  volume = {5},
  number = {52},
  pages = {2520},
  author = {Santiago Badia and Francesc Verdugo},
  title = {Gridap: An extensible Finite Element toolbox in {J}ulia},
  journal = {J. Open Source Softw.},
  fjournal = {Journal of Open Source Software},
  note = {DOI: \url{https://doi.org/10.21105/joss.02520}},
  doi = {https://doi.org/10.21105/joss.02520}
}

@article{DV_camwa2022,
title = {Virtual element method on polyhedral meshes for bi-harmonic eigenvalues problems},
JOURNAL = {Comput. Math. Appl.},
volume = {121},
pages = {85-101},
year = {2022},
issn = {0898-1221},
doi = {https://doi.org/10.1016/j.camwa.2022.07.001},
note = {DOI: \url{https://doi.org/10.1016/j.camwa.2022.07.001}},
author = {Franco Dassi and Iván Velásquez},
}

@article{dassi2025vem++,
  title={{VEM++}, a {C++} library to handle and play with the Virtual Element Method},
  author={Dassi, Franco},
  journal={Numer. Algorithms},
  fjournal={Numerical Algorithms},
  pages={1--43},
  year={2025},
  publisher={Springer},
  doi={https://doi.org/10.1007/s11075-025-02059-z},
  note = {DOI: \url{https://doi.org/10.1007/s11075-025-02059-z}}
}

@article{MRV2018,
	author = {{Mora, David} and {Rivera, Gonzalo} and {Velásquez, Iván}},
	title = {A virtual element method for the vibration problem of Kirchhoff plates},
	DOI= "10.1051/m2an/2017041",
	note= "DOI: \url{https://doi.org/10.1051/m2an/2017041}",
	journal = {ESAIM. Math. Model. Numer. Anal.},
	year = 2018,
	volume = 52,
	number = 4,
	pages = "1437-1456",
}

@article {MR4047014,
    AUTHOR = {Wang, Liang and Xiong, Chunguang and Wu, Huibin and Luo,
              Fusheng},
     TITLE = {A priori and a posteriori error analysis for discontinuous
              {G}alerkin finite element approximations of biharmonic
              eigenvalue problems},
   JOURNAL = {Adv. Comput. Math.},
  FJOURNAL = {Advances in Computational Mathematics},
    VOLUME = {45},
      YEAR = {2019},
    NUMBER = {5-6},
     PAGES = {2623--2646},
      ISSN = {1019-7168,1572-9044},
   MRCLASS = {65N25 (65N15 65N30 65N55)},
  MRNUMBER = {4047014},
MRREVIEWER = {Svetozar\ D.\ Margenov},
       DOI = {10.1007/s10444-019-09689-7},
       note = {DOI: \url{https://doi.org/10.1007/s10444-019-09689-7}},
}

@article{MoraVelasquez2020,
  title={Virtual element for the buckling problem of Kirchhoff--Love plates},
  author={Mora, David and Vel{\'a}squez, Iv{\'a}n},
  journal={Comput. Methods Appl. Mech. Eng.},
  fjournal={Computer Methods in Applied Mechanics and Engineering},
  volume={360},
  pages={112687},
  year={2020},
  publisher={Elsevier},
  doi={https://doi.org/10.1016/j.cma.2019.112687},
  note={DOI: \url{https://doi.org/10.1016/j.cma.2019.112687}}
}

@article{MR2009,
author = {Mora, David and Rodriguez, Rodolfo},
year = {2009},
month = {10},
pages = {1891-1917},
title = {A piecewise linear finite element method for the buckling and the vibration problems of thin plates},
volume = {78},
journal = {Math. Comput.},
doi = {10.1090/S0025-5718-09-02228-5},
note = {DOI: \url{https://doi.org/10.1090/S0025-5718-09-02228-5}}
}

@article{ADAK2023115763,
title = {A {C$^0$}-nonconforming virtual element methods for the vibration and buckling problems of thin plates},
journal = {Comput. Meth. Appl. Mech. Eng.},
fjournal = {Computer Methods in Applied Mechanics and Engineering},
volume = {403},
pages = {115763},
year = {2023},
issn = {0045-7825},
note = {DOI: \url{https://doi.org/10.1016/j.cma.2022.115763}},
author = {Dibyendu Adak and David Mora and Iván Velásquez},
doi = {https://doi.org/10.1016/j.cma.2022.115763}
}

@article{GAO20082139,
title = {Film cooling on a gas turbine blade pressure side or suction side with axial shaped holes},
journal = {Int. J. Heat Mass Transf.},
fjournal = {International Journal of Heat and Mass Transfer},
volume = {51},
number = {9},
pages = {2139-2152},
year = {2008},
issn = {0017-9310},
doi = {https://doi.org/10.1016/j.ijheatmasstransfer.2007.11.010},
note = {DOI: \url{https://doi.org/10.1016/j.ijheatmasstransfer.2007.11.010}},
author = {Zhihong Gao and Diganta P. Narzary and Je-Chin Han}
}

@article{BANG2023123664,
title = {Augmented cooling performance in gas turbine blade tip with slot cooling},
journal = {Int. J. Heat Mass Transf.},
fjournal = {International Journal of Heat and Mass Transfer},
volume = {201},
pages = {123664},
year = {2023},
issn = {0017-9310},
doi = {https://doi.org/10.1016/j.ijheatmasstransfer.2022.123664},
note = {DOI: \url{https://doi.org/10.1016/j.ijheatmasstransfer.2022.123664}},
author = {Minho Bang and Seungyeong Choi and Seok Min Choi and Dong-Ho Rhee and Hee Koo Moon and Hyung Hee Cho}
}

@article{TAKAGI2004348,
title = {Measuring Young’s modulus of Ni-based superalloy single crystals at elevated temperatures through microindentation},
author = {Hidenari Takagi and Masami Fujiwara and Koji Kakehi},
journal = {Mater. Sci. Eng. A.},
volume = {387-389},
pages = {348-351},
year = {2004},
issn = {0921-5093},
doi = {https://doi.org/10.1016/j.msea.2004.01.061},
note = {DOI: \url{https://doi.org/10.1016/j.msea.2004.01.061}}
}

@article{BYUN2011274,
title = {Buckling analysis and optimal structural design of supercavitating vehicles using finite element technology},
journal = {Int. J. Nav. Archit. Ocean Eng.},
fjournal = {International Journal of Naval Architecture and Ocean Engineering},
volume = {3},
number = {4},
pages = {274-285},
year = {2011},
issn = {2092-6782},
doi = {https://doi.org/10.2478/IJNAOE-2013-0071},
note = {DOI: \url{https://doi.org/10.2478/IJNAOE-2013-0071}},
author = {Wanil Byun and Min Ki Kim and Kook Jin Park and Seung Jo Kim and Minho Chung and Jin Yeon Cho and Sung-Han Park},
}

@article{YORK1998665,
title = {Aircraft wing panel buckling analysis: efficiency by approximations},
journal ={Comput. Struct.},
fjournal = {Computers \& Structures},
volume = {68},
number = {6},
pages = {665-676},
year = {1998},
issn = {0045-7949},
doi = {https://doi.org/10.1016/S0045-7949(98)00050-9},
note = {DOI: \url{https://doi.org/10.1016/S0045-7949(98)00050-9}},
author = {C.B. York and F.W. Williams},
}

@article{Stein1994AdaptiveFE,
  title={Adaptive finite element analysis of geometrically non‐linear plates and shells, especially buckling},
  author={Erwin Stein and B. Seifert and Stephan Ohnimus and Carsten Carstensen},
  journal={Int. J. Numer. Methods Eng.},
  fjournal={International Journal for Numerical Methods in Engineering},
  year={1994},
  volume={37},
  pages={2631-2655},
  doi={https://doi.org/10.1002/nme.1620371508},
  note={DOI: \url{https://doi.org/10.1002/nme.1620371508}}
}

@book{verfurth1996review,
  title={A Review of A Posteriori Error Estimation and Adaptive Mesh-Refinement Techniques},
  author={Verf{\"u}rth, R{\"u}diger},
  isbn={9783519026051},
  lccn={gb96047389},
  series    = {Advances in Numerical Mathematics},
  publisher = {Wiley-Teubner},
  address   = {Stuttgart},
  year={1996}
}

@article{zhao2023interior,
  title={The interior penalty virtual element method for the biharmonic problem},
  author={Zhao, Jikun and Mao, Shipeng and Zhang, Bei and Wang, Fei},
  journal={Mathematics of Computation},
  volume={92},
  number={342},
  pages={1543--1574},
  year={2023},
  doi={https://doi.org/10.1090/mcom/3828}
}
\end{document}